\begin{document}
\setlength{\baselineskip}{16pt}

\parindent 0.5cm
\evensidemargin 0cm \oddsidemargin 0cm \topmargin 0cm \textheight
22cm \textwidth 16cm \footskip 2cm \headsep 0cm

\newtheorem{theorem}{Theorem}[section]
\newtheorem{lemma}{Lemma}[section]
\newtheorem{proposition}{Proposition}[section]
\newtheorem{definition}{Definition}[section]
\newtheorem{example}{Example}[section]
\newtheorem{corollary}{Corollary}[section]

\newtheorem{remark}{Remark}[section]

\numberwithin{equation}{section}

\def\p{\partial}
\def\I{\textit}
\def\R{\mathbb R}
\def\C{\mathbb C}
\def\u{\underline}
\def\l{\lambda}
\def\a{\alpha}
\def\O{\Omega}
\def\e{\epsilon}
\def\ls{\lambda^*}
\def\D{\displaystyle}
\def\wyx{ \frac{w(y,t)}{w(x,t)}}
\def\imp{\Rightarrow}
\def\tE{\tilde E}
\def\tX{\tilde X}
\def\tH{\tilde H}
\def\tu{\tilde u}
\def\d{\mathcal D}
\def\aa{\mathcal A}
\def\DH{\mathcal D(\tH)}
\def\bE{\bar E}
\def\bH{\bar H}
\def\M{\mathcal M}
\renewcommand{\labelenumi}{(\arabic{enumi})}

\def\disp{\displaystyle}
\def\undertex#1{$\underline{\hbox{#1}}$}
\def\card{\mathop{\hbox{card}}}
\def\sgn{\mathop{\hbox{sgn}}}
\def\exp{\mathop{\hbox{exp}}}
\def\OFP{(\Omega,{\cal F},\PP)}
\newcommand\JM{Mierczy\'nski}
\newcommand\RR{\ensuremath{\mathbb{R}}}
\newcommand\CC{\ensuremath{\mathbb{C}}}
\newcommand\QQ{\ensuremath{\mathbb{Q}}}
\newcommand\ZZ{\ensuremath{\mathbb{Z}}}
\newcommand\NN{\ensuremath{\mathbb{N}}}
\newcommand\PP{\ensuremath{\mathbb{P}}}
\newcommand\abs[1]{\ensuremath{\lvert#1\rvert}}

\newcommand\normf[1]{\ensuremath{\lVert#1\rVert_{f}}}
\newcommand\normfRb[1]{\ensuremath{\lVert#1\rVert_{f,R_b}}}
\newcommand\normfRbone[1]{\ensuremath{\lVert#1\rVert_{f, R_{b_1}}}}
\newcommand\normfRbtwo[1]{\ensuremath{\lVert#1\rVert_{f,R_{b_2}}}}
\newcommand\normtwo[1]{\ensuremath{\lVert#1\rVert_{2}}}
\newcommand\norminfty[1]{\ensuremath{\lVert#1\rVert_{\infty}}}

\title{Diffusive KPP Equations with  Free Boundaries in Time Almost Periodic Environments: II. Spreading Speeds and Semi-Wave Solutions}

\author{Fang Li\\
School of Mathematical Sciences\\
University of Science and Technology of China\\
Hefei, Anhui, 230026, P.R.China\\
and\\
Department of Mathematics and Statistics\\
Auburn University\\
Auburn University, AL 36849 \\
 \\
 Xing Liang\\
 School of Mathematical Sciences\\
  University of Science and Technology of China\\
  Hefei, Anhui, 230026, P.R.China\\
   \\
  and
  \\
   Wenxian Shen\\
   Department of Mathematics and Statistics\\
Auburn University\\
Auburn University, AL 36849\\
 }

\date{}
\maketitle

\noindent \textbf{Abstract.} In this series of papers, we investigate  the spreading and
vanishing dynamics of time almost periodic diffusive KPP equations with  free boundaries.
Such equations are used to characterize the spreading of a new
 species in time almost periodic environments with free boundaries representing the spreading fronts.
In the first part of the series, we showed that a spreading-vanishing
dichotomy occurs for such free boundary problems (see \cite{LiLiSh1}). In this second part of the series,
we investigate the spreading speeds of such free boundary problems in the case that the spreading occurs. We first prove
the existence of a unique time almost periodic semi-wave solution associated to such a free boundary problem.
Using the semi-wave solution, we then prove that the free boundary problem
has a unique spreading speed.

\medskip

\noindent \textbf{Keywords.} Diffusive KPP equation, free boundary, time almost periodic environment, spreading-vanishing
dichotomy, spreading speed, time almost periodic semi-wave solution,  principal Lyapunov exponent.

\medskip

\noindent \textbf{2010 Mathematics Subject Classification.} 35K20, 35K57, 35B15, 37L30, 92B05.

\date{}
\maketitle


\section{Introduction}

This is the second part of a series of papers on  the spreading and
vanishing dynamics of  diffusive equations with free boundaries of the form,
\begin{equation}
\label{main-eq0}
\begin{cases}
u_t=u_{xx}+uf(t,x,u),\quad &t>0, 0<x<h(t)\cr
h^{'}(t)=-\mu u_x(t,h(t)),\quad &t>0\cr
u_x(t,0)=u(t,h(t))=0,\quad &t>0\cr
h(0)=h_0, u(0,x)=u_0(x),\quad &0\le x \le h_0,
\end{cases}
\end{equation}
where $\mu>0$.
We assume that
 $f(t,x,u)$ is a $C^1$ function in $t\in\RR$, $x\in\RR$, and
$u\in\RR$; $f(t,x,u)<0$ for $u\gg 1$; $f_u(t,x,u)<0$ for $u\ge 0$,
and $f(t,x,u)$ is almost periodic in $t$ uniformly with respect to
$x\in\RR$ and $u$ in bounded sets of $\RR$ (see (H1), (H2) in section 2 for detail). Here is a typical example
of such functions,
$f(t,x,u)=a(t,x)-b(t,x)u$, where $a(t,x)$ and $b(t,x)$ are almost periodic in $t$ and periodic in $x\in\RR$, and
$\inf_{t\in\RR,x\in\RR}b(t,x)>0$.

 Observe  that for given $h_0>0$ and $u_0$ satisfying
    \begin{equation}
 \label{initial-value}
 u_0\in C^2([0,h_0]),  \  u^{'}_0(0)=u_0(h_0)=0, \ {\rm and} \ u_0>0 \ {\rm in} \ [0,h_0),
 \end{equation}
   \eqref{main-eq0} has a (local) solution  $(u(t,\cdot;u_0,h_0)$, $h(t;u_0,h_0))$ with $u(0,\cdot;u_0,h_0)=u_0(\cdot)$ and $h_0(0;u_0,h_0)=h_0$ (see \cite{DuGuPe}). Moreover,
by comparison principle for parabolic equations, $(u(t,\cdot;u_0,h_0),h(t;u_0,h_0))$ exists for all
$t>0$ and $u_x(t,h(t;u_0,h_0);u_0,h_0)<0$. Hence $h(t;u_0,h_0)$ increases as $t$ increases.

Equation \eqref{main-eq0}  with $f(t,x,u)=u(a-bu)$ and $a$ and $b$ being
two positive constants was introduced by Du and Lin in \cite{DuLi}
to understand the spreading of species. A great deal of previous mathematical investigation on the spreading of species (in one space dimension case)
has been based on diffusive equations of the form
\begin{equation}
\label{kpp-general-eq}
u_t=u_{xx}+u f(t,x,u),\quad x\in\RR,
\end{equation}
where $f(t,x,u)<0$ for $u\gg 1$ and $f_u(t,x,u)<0$ for $u\ge 0$. Thanks to the pioneering works of Fisher (\cite{Fis}) and Kolmogorov, Petrowsky, Piscunov
(\cite{KPP}) on the following special case of \eqref{kpp-general-eq}
\begin{equation}
\label{kpp-special-eq}
u_t=u_{xx}+u(1-u),\quad x\in\RR,
\end{equation}
\eqref{main-eq0}, resp. \eqref{kpp-general-eq}, is referred to as diffusive Fisher or KPP equation.

One of the central problems for both \eqref{main-eq0} and \eqref{kpp-general-eq} is
to understand their spreading dynamics. For \eqref{kpp-general-eq}, this is closely related to
spreading speeds and transition fronts of \eqref{kpp-general-eq} and has been widely studied
(see \cite{BeHaNadir, LiZh, Nad, NoXi, Wei}, etc. for the study in the case that $f(t,x,u)$ is periodic in $t$ and/or $x$,
and see \cite{BeHa, BeHaNa, BeNa, HuSh, KoSh, NaRo, NoXi1, She1, She2, She3, TaZhZl, Zla}, etc. for the study in the case
that the dependence   of $f(t,x,u)$ on $t$ or $x$ is non-periodic). The spreading dynamics for \eqref{kpp-general-eq}
in many cases, including the cases that $f$ is periodic in $t$ and $x$,  is quite well understood.
For example, when $f(t,x,u)$ is periodic in $t$ and independent of $x$, or is independent of $t$ and periodic in $x$,
 it
has been proved
 that  \eqref{kpp-general-eq}
 has a unique positive periodic solution $u^*(t,x)$  which is asymptotically stable with
respect to periodic perturbations  and  has a spreading speed $c^*\in\RR$  in the sense that
 for any given $u_0\in C_{\rm unif}^b(\RR,\RR^+)$ with non-empty compact support,
 \begin{equation}
 \label{kpp-spreading-eq}
 \begin{cases}
 \lim_{|x| \le c^{'}t,t\to\infty}[u(t,x;u_0)-u^*(t,x)]=0\quad \forall\,\, c^{'}<c^*\cr
 \lim_{|x| \ge c^{''}t,t\to\infty} u(t,x;u_0)=0\quad \forall\,\, c^{''}> c^*,
 \end{cases}
 \end{equation}
 where $u(t,x;u_0)$ is the solution of \eqref{kpp-general-eq} with $u(0,x;u_0)=u_0(x)$
 (see \cite{LiZh, Wei}).

The spreading property \eqref{kpp-spreading-eq} for \eqref{kpp-general-eq} in the case that $f(t,x,u)$ is periodic in $t$ and independent of $x$
or independent of $t$ and periodic in $x$
 implies that spreading always happens for a solution of
\eqref{kpp-general-eq} with a positive initial data, no matter how small the positive initial data is.
The following strikingly different  spreading scenario has been proved for  \eqref{main-eq0} in the case that
$f(t,x,u)\equiv f(u)$ (see \cite{DuGu, DuLi}):  it exhibits a spreading-vanishing dichotomy in the sense that for any given positive constant $h_0$ and initial data
$u_0(\cdot)$ satisfying \eqref{initial-value}, either vanishing occurs (i.e. $\lim_{t\to\infty}h(t;u_0,h_0)<\infty$ and $\lim_{t\to\infty}u(t,x;u_0,h_0)=0$) or
spreading occurs (i.e. $\lim_{t\to\infty}h(t;u_0,h_0)=\infty$ and $\lim_{t\to\infty}u(t,x;u_0,h_0)=u^*$ locally uniformly in $x\in \RR^+$, where
$u^*$ is the unique positive solution of $f(u)=0$). The above spreading-vanishing dichotomy for \eqref{main-eq0} with
$f(t,x,u)\equiv f(u)$ has also been extended to the cases that $f(t,x,u)$ is periodic in $t$ or that $f(t,x,u)$ is independent of $t$ and periodic
in $x$ (see \cite{DuGuPe,DuLia}). The spreading-vanishing dichotomy proved for \eqref{main-eq0} in \cite{DuGu, DuGuPe, DuLia, DuLi}
 is well supported by some empirical evidences, for example, the introduction of several
bird species from Europe to North America in the 1900s was successful only after many initial
attempts (see \cite{LoHoMa, ShKa}).

While the spreading dynamics for \eqref{kpp-general-eq} with non-periodic time and/or space dependence has been
studied by many people recently
(see \cite{BeHa, BeHaNa, BeNa, HuSh, KoSh, NaRo, NoXi1, She1, She2, She3, TaZhZl, Zla}, etc.),  there is little study
on the spreading dynamics for \eqref{main-eq0} with non-periodic time and space dependence.

The objective of the current series
of papers is to investigate the spreading-vanishing dynamics of \eqref{main-eq0} in the case that $f(t,x,u)$ is almost periodic in $t$,
that is, to investigate  whether the population will
successfully establishes itself
in the entire space (i.e. spreading occurs), or  it fails to establish and vanishes
eventually (i.e. vanishing occurs).
Roughly speaking, for given $h_0>0$ and $u_0$ satisfying \eqref{initial-value},
if $h_\infty=\lim_{t\to\infty}h(t;u_0,h_0)=\infty$ and for any $M>0$,
$\liminf_{t\to\infty}\inf_{0\le x\le M}u(t,x;u_0,h_0)>0$,
we say {\it spreading} occurs. If $h_\infty<\infty$ and $\lim_{t\to\infty}u(t,x;u_0,h_0)=0$, we say {\it vanishing} occurs (see Definition
\ref{spreading-vanishing-def} for detail). We say a positive number $c^*$ is a {\it spreading speed} of \eqref{main-eq0} if for any $h_0>0$ and
 $u_0$ satisfying \eqref{initial-value} such that the spreading occurs,
$$\lim_{t\to\infty}\frac{h(t;u_0,h_0)}{t}=c^*$$
 and
 $$
 \liminf_{0\le x\le c^{'}t,t\to\infty}u(t,x;u_0,h_0)>0\quad \forall \,\, c^{'}<c^*
 $$
 (see Definition
\ref{spreading-vanishing-def} for detail).

The spreading speed of \eqref{main-eq0} is strongly related to the so called semi-wave solution of
 the following free boundary problem associated with \eqref{main-eq0},
\begin{equation}
\label{aux-main-eq3-0}
\begin{cases}
u_t=u_{xx}+uf(t,x,u),\quad -\infty<x<h(t)\cr
u(t,h(t))=0 \cr
h^{'}(t)=-\mu u_x(t,h(t)).
\end{cases}
\end{equation}
 If $(u(t,x),h(t))$ is an entire positive solution of \eqref{aux-main-eq3-0} with $\liminf_{x\to \infty}u(t,h(t)-x)>0$,
it is called a {\it semi-wave solution} of \eqref{aux-main-eq3-0}.

In the first part of the series of the papers, we studied the spreading and vanishing dichotomy for \eqref{main-eq0}.
Under proper assumptions (see (H1)-(H5) in Section 2 of part I, \cite{LiLiSh1}), we proved

\medskip
\noindent $\bullet$ {\it There are $l^*>0$ and a unique time almost periodic positive solution $u^*(t,x)$  of the following
 fixed boundary problem,
 \begin{equation}
 \label{aux-main-eq1}
 \begin{cases}
 u_t=u_{xx}+uf(t,x,u),\quad x>0\cr
 u_x(t,0)=0
 \end{cases}
 \end{equation}
 such that for any given $h_0>0$ and $u_0$ satisfying \eqref{initial-value}, either

\noindent (i) $h_\infty\le l^*$ and $u(t,x;u_0,h_0)\to 0$ as $t\to\infty$ or

 \noindent (ii) $h_\infty=\infty$ and $u(t,x;u_0,h_0)-u^*(t,x)\to 0$ as $t\to\infty$ locally uniformly in $x\ge 0$}
 (see \cite[Theorems 2.1 and 2.2]{LiLiSh1} or Proposition \ref{main-results-of-part1} in the case $f(t,x,u)\equiv f(t,u)$).

\medskip

In this second part of the series of papers, we study the existence of spreading speeds of \eqref{main-eq0} and
semi-wave solutions of \eqref{aux-main-eq3-0} in the case that $f(t,x,u)\equiv f(t,u)$, that is, we consider
\begin{equation}
\label{main-eq}
\begin{cases}
u_t=u_{xx}+uf(t,u),\quad &t>0, 0<x<h(t)\cr
h^{'}(t)=-\mu u_x(t,h(t)),\quad &t>0\cr
u_x(t,0)=u(t,h(t))=0,\quad &t>0\cr
h(0)=h_0, u(0,x)=u_0(x),\quad &0\le x \le h_0.
\end{cases}
\end{equation}
Note that \eqref{aux-main-eq3-0} then becomes
\begin{equation}
\label{aux-main-eq2}
\begin{cases}
u_t=u_{xx}+uf(t,u),\quad  -\infty<x<h(t)\cr
u(t,h(t))=0\cr
h^{'}(t)=-\mu u_x(t,h(t)).
\end{cases}
\end{equation}

To study the existence of spreading speeds of \eqref{main-eq} and
semi-wave solutions of \eqref{aux-main-eq2},
we  also consider the following  fixed boundary problem on half line,
\begin{equation}
\label{aux-main-eq3}
\begin{cases}
u_t=u_{xx}-\mu u_x(t,0)u_x(t,x)+uf(t,u),\quad 0<x<\infty\cr
u(t,0)=0.
\end{cases}
\end{equation}
Observe  that if $u^{*}(t,x)$ is an almost periodic positive solution of \eqref{aux-main-eq3} with $\liminf_{x\to\infty}u^{*}(t,x)>0$,
let $u^{**}(t,x)=u^{*}(t,h^{**}(t)-x)$ and $h^{**}(t)=\mu\int_0^ t u_x^*(s,0)ds$. Then
$(u^{**}(t,x),h^{**}(t))$ is an almost periodic semi-wave solution of \eqref{aux-main-eq2}. Hence  a  positive entire solution of \eqref{aux-main-eq3} gives
rise to a semi-wave solution of \eqref{aux-main-eq2}, and vice visa.
 Among others, we  prove

\medskip

\noindent $\bullet$ {\it There is a unique time almost periodic  stable positive solution $u^{*}(t,x)$ of \eqref{aux-main-eq3}
satisfying that $\liminf_{x\to\infty}u^{*}(t,x)>0$ and $u_x^{*}(t,0)>0$
(hence there is a time almost periodic semi-wave solution of \eqref{aux-main-eq2})} (see Theorem \ref{main-thm1}
for the detail).

\medskip

\noindent $\bullet$ {\it $c^{*}=\mu \lim_{t\to\infty}\frac{1}{t}\int_0^t u_x^{*}(s,0)ds$ is
the spreading speed of \eqref{main-eq}} (see Theorem \ref{main-thm2} for the detail).

\medskip

We remark that, when $f(t,u)$ is periodic in $t$ with period $T$, the authors of \cite{DuGuPe} used the following approach to prove
the existence of time periodic positive solution of  \eqref{aux-main-eq3}. First, for any given nonnegative time $T$-periodic function
$k(t)$ ($k(t+T)=k(t)$), they prove that there is a unique time $T$-periodic positive solution $U^{*}(t,x;k(\cdot))$ of the following
equation,
$$
\begin{cases}
u_t=u_{xx}-k(t)u_x+u f(t,u),\quad 0<x<\infty\cr
u(t,0)=0, u(t,x)=u(t+T,x).
\end{cases}
$$
Then by applying the Schauder fixed point theorem,  they prove that there is a nonnegative time $T$-periodic function $k^{*}(t)$ such that
$$
k^{*}(t)=\mu U_x^{*}(t,0;k^{*}(\cdot)).
$$
It then follows that $u^{*}(t,x)=U^{*}(t,x;k^{*}(\cdot))$ is a time $T$-periodic positive solution of \eqref{aux-main-eq3}.
The application of this approach to the time periodic case is  nontrivial. It
 is difficult to apply this approach to the case that $f(t,u)$ is almost periodic in $t$. We therefore prove
the existence of time almost periodic positive solution $u^{*}(t,x)$ directly. The proof is certainly also nontrivial and
can be applied to the time periodic case as well as more general time dependent cases.

We also remark that  similar results to the above hold for the
following double fronts free boundary problem:
\begin{equation}
\begin{cases}
\label{main-doub-eq}
u_t=u_{xx}+uf(t,u) \quad &t>0, g(t)<x<h(t) \cr
u(t,g(t))=0, g^{'}(t)=-\mu u_x(t,g(t)) \quad &t>0 \cr
u(t,h(t))=0, h^{'}(t)=-\mu u_x(t,h(t)) \quad &t>0,
\end{cases}
\end{equation}
where both $x=g(t)$ and $x=h(t)$ are to be determined.

 The rest of this paper is organized as follows.  In Section 2, we introduce the definitions and standing assumptions and
 state the main results of the paper. We present preliminary materials in Section 3 for the use in later sections.
 Section 4 is devoted to the investigation of time almost periodic KPP equations \eqref{aux-main-eq2} and
 \eqref{aux-main-eq3}  and Theorem \ref{main-thm1} is proved in this section.
  We show the existence and provide a characterization of the spreading speed of \eqref{main-eq}
 and prove Theorem \ref{main-thm2} in Section 5. The paper is ended up with some remarks on the spreading speeds and semi-wave
 solutions of \eqref{main-doub-eq} in Section 6.

\section{Definitions, Assumptions, and Main Results}

In this section, we introduce the  definitions and standing assumptions, and state the main results.

\subsection{Definitions and assumptions}

In this subsection, we introduce the definitions and standing assumptions. We first recall the definition of almost periodic
functions, next recall the definition of principal Lyapunov exponents for some linear parabolic equations, then state
the standing assumptions, and finally introduce the definition of spreading and vanishing for \eqref{main-eq}.

\begin{definition}[Almost periodic function]
\label{almost-periodic-def}
\begin{itemize}

\item[(1)] A continuous function $g:\RR\to \RR$ is called {\rm almost periodic} if  for any
$\epsilon>0$, the set
$$
T(\epsilon)=\{\tau\in\RR\,|\, |g(t+\tau)-g(t)|<\epsilon\,\, \text{for all}\,\, t\in\RR\}
$$
is relatively dense in $\RR$.

\item[(2)] Let $g(t,x,u)$ be a continuous function of $(t,x,u)\in\RR\times\RR^m\times\RR^n$. $g$ is said to be {\rm almost periodic in $t$ uniformly with respect to $x\in\RR^m$ and
$u$ in bounded sets} if
$g$ is uniformly continuous in $t\in\RR$,  $x\in\RR^m$, and $u$ in bounded sets and for each $x\in\RR^m$ and $u\in\RR^n$, $g(t,x,u)$ is almost periodic in $t$.

\item[(3)] For a given almost periodic function $g(t,x,u)$, the hull $H(g)$ is defined by
\begin{align*}
H(g)=\{\tilde g(\cdot,\cdot,\cdot)\,|\, & \exists t_n\to\infty \,\,\text{such that}\,\, g(t+t_n,x,u)\to \tilde g(t,x,u)\,\, \text{uniformly in}\,\, t\in\RR,\\
&
\,\, (x,u)\,\, \text{in bounded sets}\}.
\end{align*}
\end{itemize}
\end{definition}

\begin{remark}
\label{almost-periodic-rk}
(1) Let $g(t,x,u)$ be a continuous function of $(t,x,u)\in\RR\times\RR^m\times\RR^n$. $g$ is almost periodic in $t$ uniformly with respect to
$x\in\RR^m$ and $u$ in bounded sets  if and only if
 $g$ is uniformly continuous in $t\in\RR$,  $x\in\RR^m$, and $u$ in bounded sets and for any sequences $\{\alpha_n^{'}\}$,
$\{\beta_n^{'}\}\subset \RR$, there are subsequences $\{\alpha_n\}\subset\{\alpha_n^{'}\}$, $\{\beta_n\}\subset\{\beta_n^{'}\}$
such that
$$
\lim_{n\to\infty}\lim_{m\to\infty}g(t+\alpha_n+\beta_m,x,u)=\lim_{n\to\infty}g(t+\alpha_n+\beta_n,x,u)
$$
for each $(t,x,u)\in\RR\times\RR^m\times\RR^n$ (see \cite[Theorems 1.17 and 2.10]{Fink}).

(2) We may write $g(\cdot+t,\cdot,\cdot)$ as $g\cdot t(\cdot,\cdot,\cdot)$.
\end{remark}

For a given positive constant $l>0$ and a given $C^1$ function $a(t,x)$ with both $a(t,x)$ and $a_t(t,x)$ being
 almost periodic in $t$ uniformly in $x$ in bounded sets, consider
\begin{equation}
\label{linearized-eq1}
\begin{cases}
v_t=v_{xx}+a(t,x)v,\quad 0<x<l\cr v_x(t,0)=v(t,l)=0.
\end{cases}
\end{equation}

Let
$$
Y(l)=\{u\in C([0,l])\,|\, u(l)=0\}
$$
with the norm $\|u\|=\max_{x\in [0,l]}|u(x)|$ for $u\in Y(l)$.
Let $A=\Delta$ acting on $Y(l)$ with $\mathcal{D}(A)=\{u\in C^2([0,l])\cap Y(l)\,|\, u_x(0)=0\}$.
Note that  $A$ is a sectorial operator. Let $0<\alpha<1$ be such that $\mathcal{D}(A^\alpha)\subset C^1([0,l])$. Fix such an $\alpha$.
Let
\begin{equation}
\label{bounded-domain-space-eq0}
X(l)=\mathcal{D}(A^\alpha).
\end{equation}
Then $X(l)$ is strongly ordered Banach spaces with positive cone
$$
X^+(l)=\{u\in X(l)\,|\, u(x)\ge 0\}.
$$
Let
$$
X^{++}(l)={\rm Int}(X^+(l)).
$$
If no confusion occurs, we may write $X(l)$ as $X$.

By semigroup theory (see \cite{PA}), for any  $v_0\in X(l)$, \eqref{linearized-eq1}
has a unique solution $v(t,\cdot;v_0,a)$ with $v(0,\cdot;v_0,a)=v_0(\cdot)$.

For  given  constants $l>0$, $\gamma\ge 0$,  and a given $C^1$ function
 $a(t,x)$ with both $a(t,x)$ and $a_t(t,x)$ being almost periodic function in $t$ uniformly in $x$ in bounded sets, consider
also
\begin{equation}
\label{aaux-linearized-eq1}
\begin{cases}
v_t=v_{xx}-\gamma v_x+a(t,x)v,\quad 0<x<l\cr
v(t,0)=v(t,l)=0.
\end{cases}
\end{equation}
Let
$$
\tilde Y(l)=\{u\in C([0,l])\,|\, u(0)=u(l)=0\}.
$$
Let $A=\Delta$ acting on $\tilde Y(l)$ with $\mathcal{D}(A)=\{u\in C^2([0,l])\cap \tilde  Y(l)\}$.
Note that  $A$ is a sectorial operator. Let $0<\alpha<1$ be such that $\mathcal{D}(A^\alpha)\subset C^1([0,l])$. Fix such an $\alpha$.
Let
\begin{equation}
\label{bounded-domain-space-eq1}
\tilde X(l)=\mathcal{D}(A^\alpha).
\end{equation}
Then, for any $v_0\in \tilde X(l)$, \eqref{aaux-linearized-eq1} has a unique solution
$\tilde v(t,\cdot;v_0,a)$ with $\tilde v(0,\cdot;v_0,a)=v_0(\cdot)$.

\begin{definition}[Principal Lyapunov exponent]
\label{principal-lyapunov-exp}
\begin{itemize}
\item[(1)] Let $V(t,a)v_0=v(t,\cdot;v_0,a)$ for $v_0(\cdot)\in X(l)$ and
$$
\lambda(a,l)=\limsup_{t\to\infty}\frac{\ln \|V(t,a)\|_{X(l)}}{t}.
$$
$\lambda(a,l)$ is called the {\rm principal Lyapunov exponent} of \eqref{linearized-eq1}.

\item[(2)] Let
$$
\tilde\lambda(a,\gamma,l)=\limsup_{t\to\infty}\frac{\ln\|\tilde V(t,a)\|_{\tilde X(l)}}{t}
$$
where $\tilde V(t,a)v_0=\tilde v(t,\cdot;v_0,a)$ for $v_0\in\tilde X(l)$.
$\tilde\lambda(a,\gamma,l)$ is called the {\rm principal Lyapunov exponent} of \eqref{aaux-linearized-eq1}.
\end{itemize}
\end{definition}

Let (H1)-(H3) be the following standing assumptions.

\medskip

\noindent {\bf (H1)} {\it $f(t,u)$ is $C^1$ in $(t,u)\in\RR^2$, $Df=(f_t,f_u)$ is bounded in $t\in\RR$ and
in $u$ in bounded sets, and $f$ is monostable in $u$ in the sense that
there are $M>0$ such that $$\sup_{t\in\RR,u\ge M}f(t,u)<0$$ and
$$\sup_{t\in\RR,u\ge 0}f_u(t,u)<0.$$}

\medskip

\noindent{\bf (H2)} {\it $f(t,u)$ and $Df(t,u)=(f_t(t,u),f_u(t,u))$ are  almost periodic in $t$ uniformly with respect to
$u$ in bounded sets.
}

\medskip
\noindent {\bf (H3)}  {\it $\lim_{t\to\infty}\frac{1}{t}\int_0^t f(s,0)ds>0$.}

\medskip

Assume (H1) and (H2).
We remark that (H3) implies that there are $L^*\ge l^*>0$ such that $\lambda(a(\cdot),l)>0$ for $l>l^*$
and $\tilde\lambda(a(\cdot),0, l)>0$ for $l>L^*$, where $a(t)=f(t,0)$ (see Lemma \ref{lyapunov-exponent-lm3}
and Remark \ref{Lyapunov-exp-rk}
for the reasonings).

Consider \eqref{main-eq}. Throughout this paper, we assume (H1)-(H3).
For any given $h_0>0$ and $u_0$ satisfying \eqref{initial-value}, \eqref{main-eq} has a unique solution
$(u(t,x;u_0,h_0),h(t;u_0,h_0))$ with $u(0,x;u_0,h_0)=u_0(x)$
and $h(0;u_0,h_0)=h_0$ (see \cite{DuGuPe}). By comparison principle for parabolic equations, $u(t,x;u_0,h_0)$ exists for all $t>0$ and
$u_x(t,h(t;u_0,h_0);u_0,h_0)< 0$ for $t>0$.
Hence $h(t;u_0,h_0)$ is monotonically increasing, and therefore
there exists $h_{\infty}\in(0,+\infty]$ such that
$\lim_{t\to+\infty}h(t;u_0,h_0)=h_{\infty}$.

\begin{definition}[Spreading-vanishing and spreading speed]
\label{spreading-vanishing-def}
Consider \eqref{main-eq}.
\begin{itemize}
\item[(1)] For given $h_0>0$ and $u_0$ satisfying \eqref{initial-value}, let $h_\infty=\lim_{t\to\infty}h(t;u_0,h_0)$.
 It is said that the {\rm vanishing occurs} if $ h_\infty<\infty$ and
$\lim_{t\to\infty}\|u(t,\cdot;u_0,h_0)\|_{C([0,h(t)])}=0$. It is said that the {\rm spreading occurs}
 if $h_\infty=\infty$ and
 $\liminf_{t\to\infty} u(t,x;u_0,h_0)>0$ locally uniformly in
$x\in [0,\infty)$.

\item[(2)] A real number $c^*>0$ is called the {\rm spreading speed} of \eqref{main-eq} if for any $h_0>0$
and $u_0$ satisfying \eqref{initial-value} such that the spreading occurs,
there holds
$$
\lim_{t\to\infty}\frac{h(t;u_0,h_0)}{t}=c^*
$$
and
$$
\liminf_{0\le x\le c^{'}t,t\to\infty}u(t,x;u_0,h_0)>0,\quad \forall\,\, c^{'}<c^*.
$$
\end{itemize}
\end{definition}

Assume (H1)-(H3). It is known that
there is a unique time almost periodic positive solution  $V^*(t)$ of the following ODE (see Lemma \ref{ode-lm}),
\begin{equation*}
u_t=uf(t,u).
\end{equation*}

\begin{definition}
\label{semi-wave-def}
An entire positive solution $(u(t,x),h(t))$ of \eqref{aux-main-eq2} is called an almost periodic semi-wave solution
if $u(t,h(t)-x)$ is almost periodic in $t$ uniformly with respect to $x\ge 0$ and $h^{'}(t)$ is almost periodic in $t$, and $\lim_{x\to \infty}u(t,h(t)-x)=V^*(t)$
uniformly in $t\in\RR$.
\end{definition}

\begin{remark}
\label{semi-wave-rk}
If $ \tilde u^{**}(t,x)$ is an almost periodic positive solution of \eqref{aux-main-eq3} uniformly with respect to $x\ge 0$
and  $\lim_{x\to\infty}\tilde u^{**}(t,x)=V^*(t)$
uniformly in $t$, then $( u^{**}(t,x), h^{**}(t))$ is an almost periodic semi-wave solution of \eqref{aux-main-eq2},
where $ u^{**}(t,x)=\tilde u^{**}(t, h^{**}(t)-x)$ and $ h^{**}(t)=\mu\int_0^t \tilde u^{**}_x(s,0)ds$.
\end{remark}

\subsection{Main results}

In this subsection, we state the main results of this paper.
To do so, we first recall the main results obtained in the part I of the series.

\begin{proposition}
 \label{main-results-of-part1}
Assume (H1)-(H3).
 For any given $h_0>0$ and $u_0$ satisfying \eqref{initial-value}, let $(u(t,x;u_0,h_0),h(t;u_0,h_0))$ be
 the solution of \eqref{main-eq} with $(u(0,x;u_0,h_0),h(0;u_0,h_0))=(u_0(x),h_0)$. Then either
\begin{itemize}
\item[(i)]  $h_\infty\le l^*$ and $u(t,x;u_0,h_0)\to 0$ as $t\to\infty$ or

 \item[(ii)]  $h_\infty=\infty$ and $u(t,x;u_0,h_0)-V^*(t)\to 0$ as $t\to\infty$ locally uniformly in $x\ge 0$.
\end{itemize}
\end{proposition}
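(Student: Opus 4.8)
The plan is to obtain Proposition \ref{main-results-of-part1} as the specialization to $f(t,x,u)\equiv f(t,u)$ of the main results of Part~I, namely \cite[Theorems 2.1 and 2.2]{LiLiSh1}, which establish exactly this spreading--vanishing dichotomy for the more general problem \eqref{main-eq0}. First I would check that hypotheses (H1)--(H3) of the present paper, in the $x$-independent case, imply the standing assumptions (H1)--(H5) of Part~I (the additional conditions there concern well-posedness of \eqref{main-eq0} and the existence of the threshold $l^*$, which in the $x$-independent situation follow from (H1)--(H3) together with the Lyapunov-exponent facts recorded after (H3)). The only genuinely new point is the identification of the limiting profile: Part~I yields a unique time almost periodic positive solution $u^*(t,x)$ of the half-line Neumann problem \eqref{aux-main-eq1}, and one must verify that $u^*(t,x)\equiv V^*(t)$ when $f$ is independent of $x$. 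This is immediate, since $V^*(t)$ solves $u_t=uf(t,u)$ and is therefore a spatially constant — in particular $x$-independent and compatible with $v_x(t,0)=0$ — positive almost periodic solution of \eqref{aux-main-eq1}; the uniqueness assertion of Part~I then forces $u^*(t,x)\equiv V^*(t)$. Substituting this equality into \cite[Theorems 2.1 and 2.2]{LiLiSh1} gives the statement verbatim.

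For completeness I would also recall the structure of the argument imported from Part~I, since it is what makes $l^*$ appear. Because $h(\cdot;u_0,h_0)$ is monotone increasing, $h_\infty\in(0,\infty]$ is well defined, and the claim reduces to the trichotomy-free statement ``$h_\infty\le l^*$ or $h_\infty=\infty$''. To exclude $l^*<h_\infty<\infty$, one observes that for all large $t$ the moving domain contains a fixed interval $[0,l']$ with $l'\in(l^*,h_\infty)$ and $\lambda(f(\cdot,0),l')>0$; a subsolution built from the positive solution associated with the principal Lyapunov exponent of \eqref{linearized-eq1} on $[0,l']$ (scaled small, using monostability) then bounds $u$ from below near $x=l'$, which keeps $-u_x(t,h(t))=h'(t)/\mu$ away from $0$ and forces $h_\infty=\infty$, a contradiction.

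In the case $h_\infty\le l^*$: from $h'\ge 0$ and $h_\infty<\infty$ we get $\int_0^\infty h'(t)\,dt<\infty$, and parabolic estimates make $h'$ uniformly continuous, so $h'(t)\to 0$ and $u_x(t,h(t))\to 0$. If $\|u(t,\cdot)\|_{C([0,h(t)])}\not\to 0$, choosing $t_n\to\infty$ with $\|u(t_n,\cdot)\|\ge\delta>0$ and passing to the limit along a hull translate produces a bounded nonnegative entire solution $\hat u$ of a limiting KPP equation on $(0,h_\infty)$ with $\hat u(t,h_\infty)=\hat u_x(t,h_\infty)=0$, Neumann at $x=0$, and $\|\hat u(0,\cdot)\|\ge\delta$; but $h_\infty\le l^*$ makes the principal Lyapunov exponent of the associated linearization nonpositive, which forces $\hat u\equiv 0$, a contradiction, so vanishing occurs. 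In the case $h_\infty=\infty$, fixing $L$ and taking $t$ large enough that $h(t)>L$, comparison from below with the fixed-boundary problem on $(0,L)$ (whose solution tends to the unique positive almost periodic solution $u^*_L$, with $u^*_L\to V^*$ locally uniformly as $L\to\infty$) together with comparison from above with the ODE solution started above $\|u_0\|_\infty$ (which tends to $V^*(t)$) squeezes $u(t,x;u_0,h_0)-V^*(t)\to 0$ locally uniformly in $x\ge 0$.

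The main obstacle — and the reason the economical route here is simply to invoke \cite{LiLiSh1} — is the ``no intermediate regime'' step excluding $l^*<h_\infty<\infty$, together with upgrading boundedness of the domain to genuine extinction of $u$; both require a sharp description of the sign of $\lambda(f(\cdot,0),l)$ as $l$ crosses $l^*$ and the compactness/limiting-equation machinery for the free boundary when $h_\infty<\infty$, which is precisely the content developed in Part~I.
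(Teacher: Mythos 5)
Your proposal is correct and takes essentially the same route as the paper, which simply cites \cite[Theorem 2.2]{LiLiSh1}; your observation that uniqueness forces $u^*(t,x)\equiv V^*(t)$ when $f$ is $x$-independent is the right way to reconcile the notations of Part~I and the present statement, and the appended sketch of the Part~I argument is consistent with that paper's approach.
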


\begin{proof}
See \cite[Theorem 2.2]{LiLiSh1}.
\end{proof}

The main results  of this paper are stated in the following two theorems.

\begin{theorem}[Almost periodic semi-waves]
\label{main-thm1}
 Assume (H1)-(H3).
 \begin{itemize}
 \item[(1)] There is a  time almost periodic
 solution $\tilde u^{**}(t,x)$ of \eqref{aux-main-eq3} with $\lim_{x\to\infty}\tilde u^{**}(t,x)=V^*(t)$ uniformly in $t\in\RR$ and hence
  there is a   time almost periodic positive semi-wave solution $(u^{**}(t,x), h^{**}(t))$ of \eqref{aux-main-eq2}
 with $h^{**}(0)=0$.

 \item[(2)] If $\tilde u_1^{**}(t,x)$ and $\tilde u_2^{**}(t,x)$ are two almost periodic positive solutions of \eqref{aux-main-eq3}
 satisfying that $\lim_{x\to\infty}\tilde u_i^{**}(t,x)=V^*(t)$ uniformly in $t\in\RR$ ($i=1,2$), then $\tilde u_1^{**}(t,x)\equiv \tilde  u_2^{**}(t,x)$.

 \item[(3)] For any bounded positive solution $\tilde u(t,x)$ of \eqref{aux-main-eq3} with $\liminf_{x\to\infty}\inf_{t\ge 0}\tilde u(t,x)>0$,
 $$
 \lim_{t\to\infty} [\tilde u^{**}(t,x)-\tilde u(t,x)]=0
 $$
 uniformly in $x\ge 0$.
 \end{itemize}
\end{theorem}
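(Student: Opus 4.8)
The plan is to build the semi-wave solution as a limit of solutions on bounded domains, exploiting the monostable structure and the principal Lyapunov exponent machinery set up in Section 2, and then to establish uniqueness and the asymptotic stability (parts (2) and (3)) by a comparison/sliding argument adapted to the almost periodic setting. For part (1), I would first consider, for each $l>L^*$, the truncated fixed-boundary problem
\begin{equation*}
\begin{cases}
u_t=u_{xx}-\mu u_x(t,0)u_x(t,x)+uf(t,u),\quad 0<x<l\cr
u(t,0)=u(t,l)=0,
\end{cases}
\end{equation*}
which, by the sign condition $\tilde\lambda(f(\cdot,0),0,l)>0$ coming from (H3), admits a unique strictly positive time almost periodic solution $u_l^{**}(t,x)$ — this is where I expect to lean most heavily on the part-I techniques (skew-product flow over $H(f)$, monotonicity of the semiflow in the ordered space $\tilde X(l)$, and the principal-spectrum positivity that forces persistence). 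The coupling through $u_x(t,0)$ makes this a genuinely nonlocal problem, so the fixed point for $u_l^{**}$ has to be set up carefully; I would solve it by a monotone-iteration or sub/supersolution scheme starting from a small multiple of the principal eigenfunction as a subsolution and $V^*(t)$ (or a slightly larger constant) as a supersolution, using the comparison principle to get monotone convergence.

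Next I would pass to the limit $l\to\infty$. Interior parabolic estimates (Schauder, together with the uniform $L^\infty$ bound $0<u_l^{**}\le \overline{V}$ for a constant $\overline V\ge\sup V^*$, and control on $u_{l,x}^{**}(t,0)$ via a boundary Harnack / barrier argument) give local uniform convergence along a subsequence to a bounded entire solution $\tilde u^{**}(t,x)$ of \eqref{aux-main-eq3}, and the almost periodicity in $t$ is preserved because each $u_l^{**}$ is almost periodic uniformly in $x$ and the convergence is uniform on $t\in\RR$ for $x$ in bounded sets (one argues as in Remark \ref{almost-periodic-rk}(1) with the hull). The key remaining point for (1) is to show the far-field behavior $\lim_{x\to\infty}\tilde u^{**}(t,x)=V^*(t)$ uniformly in $t$: here I would squeeze $\tilde u^{**}$ between two one-dimensional-in-$t$ problems — from below by noting that, fixing a large $x$-translate and using the monostable dynamics, $\liminf_{x\to\infty}\tilde u^{**}(t,x)>0$ forces convergence to the unique positive almost periodic ODE solution $V^*(t)$ by Lemma \ref{ode-lm}-type arguments (any positive entire solution of $u_t=uf(t,u)$ bounded below equals $V^*$), and from above by the comparison with $V^*(t)$ itself. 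Then Remark \ref{semi-wave-rk} converts $\tilde u^{**}$ into the semi-wave $(u^{**},h^{**})$ with $h^{**}(0)=0$.

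For parts (2) and (3) I would prove (3) first — uniform convergence of any bounded positive solution $\tilde u$ with $\liminf_{x\to\infty}\inf_{t\ge0}\tilde u(t,x)>0$ to $\tilde u^{**}$ — and then (2) follows since two almost periodic solutions with the same far-field limit are each trapped above and below the other for all time. The mechanism for (3) is a part-I-style "spreading plus squeezing" argument: the lower bound on $\tilde u$ at $x=\infty$ together with the monostable ODE dynamics pushes $\tilde u(t,x)$ up towards $V^*(t)$ on expanding $x$-intervals, while near $x=0$ one uses that $\tilde u^{**}$ is the \emph{minimal} positive solution (built from the smallest truncations) to get a lower barrier, and the sub/supersolution $(1\pm\delta)$-multiples of $\tilde u^{**}$ combined with the strictly negative $f_u$ and a Lyapunov-type functional (or the part-I stability of $u^*$) to close the gap exponentially. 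The main obstacle, I expect, is handling the nonlocal term $-\mu u_x(t,0)u_x(t,x)$ consistently in all these comparisons: it is not order-preserving in the naive sense because the drift coefficient $\mu u_x(t,0)$ depends on the solution itself, so one has to either (i) freeze that coefficient, run the comparison for the linear-drift problem \eqref{aaux-linearized-eq1}, and then iterate, or (ii) track $u_x(t,0)$ as an auxiliary scalar quantity and show it too converges; reconciling the nonlocal coupling with the sliding argument and with the uniform (in $t$) control needed for almost periodicity is the technical heart of the proof.
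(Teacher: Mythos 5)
Your proposal takes a genuinely different route (bounded-domain truncation in $x$ plus monotone iteration, then $l\to\infty$), whereas the paper works on the half-line throughout: it first produces a ``good'' initial datum for which $\inf_{t\ge 0}u_x(t,0;u_0,g)>0$ (Lemma \ref{semi-wave-lm1}, proved by comparing with the fixed-drift problem \eqref{aux-semi-wave-eq1} and an iteration in time), then builds the maximal bounded entire solution $U^*(t,x;g)=\lim_{T\to\infty}u(t+T,\cdot;\tilde u_{0,g\cdot(-T)},g\cdot(-T))$ from plateau initial data via zero-number monotonicity (Lemmas \ref{zero-number-lm1}--\ref{zero-number-lm3}), and finally proves almost periodicity by showing $g\mapsto U^*(0,\cdot;g)$ is continuous on $H(f)$ using the part metric $\rho$ (Lemma \ref{part-metric-lm}) together with an $-\infty$-recurrence contradiction.

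Several steps in your route have real gaps. First, the truncated nonlocal problem on $(0,l)$ with Dirichlet data at both ends forces $u_x$ to change sign in $(0,l)$; the nonlocal drift $-\mu u_x(t,0)u_x(t,x)$ then has no usable monotone structure, so the sub/supersolution iteration you propose is not obviously order-preserving. (On the half-line one can preserve $u_x\ge 0$, which is exactly what the paper exploits in Lemma \ref{monotonicity-for-semi-wave-lm} and the part-metric comparison.) Second, you assert that almost periodicity survives the limit $l\to\infty$ because ``the convergence is uniform on $t\in\RR$ for $x$ in bounded sets,'' but parabolic estimates give only locally uniform (in $t$) subsequential convergence; uniformity in $t\in\RR$ is precisely what must be proved, and it is the hardest point — the paper devotes the bulk of the proof of part (1) to establishing it via continuity of $U^*(0,\cdot;g)$ over the hull. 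Third, the uniform-in-$l$ lower bound on $u_l^{**}$ (to prevent degeneration as the right boundary layer fills the domain) and, more crucially, the a priori positivity $\inf_t u_x(t,0)>0$ that keeps the drift coefficient under control, are not addressed; these correspond to Lemmas \ref{semi-wave-lm1-1} and \ref{semi-wave-lm1} in the paper and are nontrivial. Finally, for part (2) you cannot simply invoke part (3) at $t\to-\infty$; the paper instead establishes $U^*\ge u^{**}$ from the maximality of $U^*$ (Lemma \ref{semi-wave-lm3}) and uses almost-periodic return times plus the part metric to force equality — that mechanism is absent from your sketch.
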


\begin{theorem}[Spreading speed and semi-wave]
\label{main-thm2}
Assume (H1)-(H3) and $f(t,x,u)\equiv f(t,u)$.
Let $(u^{**}(t,x),h^{**}(t))$ be as in Theorem \ref{main-thm1} (1),
   and
  $$c^*=\lim_{t\to\infty}\frac{h^{**}(t)}{t}.
  $$
Then $c^*$ is the spreading speed of \eqref{main-eq}, that is,
 for any given $h_0>0$ and $u_0$ satisfying \eqref{initial-value}, if $h_\infty=\lim_{t\to\infty}h(t;u_0,h_0)=\infty$, then
$\lim_{t\to\infty}\frac{h(t;u_0,h_0)}{t}=c^*$ and
$$\lim_{t\to\infty}\max_{x\leq(c^*-\epsilon)t}|u(t,x;u_0,h_0)-V^*(t)|=0$$
for every small $\epsilon>0$.
\end{theorem}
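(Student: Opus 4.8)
\textbf{Proof proposal for Theorem \ref{main-thm2}.}

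The plan is to prove the spreading speed result by sandwiching the free-boundary solution $(u(t,x;u_0,h_0),h(t;u_0,h_0))$ between supersolutions and subsolutions built from the semi-wave $(u^{**}(t,x),h^{**}(t))$ of Theorem \ref{main-thm1}, combined with the dichotomy already established in Proposition \ref{main-results-of-part1}. First I would observe that since $h_\infty=\infty$, part (ii) of Proposition \ref{main-results-of-part1} gives $u(t,x;u_0,h_0)-V^*(t)\to 0$ locally uniformly in $x\ge 0$, so the content of the theorem is really the \emph{uniform-in-a-moving-frame} statement together with the sharp rate $\lim_{t\to\infty}h(t;u_0,h_0)/t=c^*$. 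The number $c^*=\lim_{t\to\infty}h^{**}(t)/t=\mu\lim_{t\to\infty}\frac1t\int_0^t\tilde u^{**}_x(s,0)\,ds$ exists as a genuine limit because $\tilde u^{**}_x(\cdot,0)$ is almost periodic in $t$ (by Theorem \ref{main-thm1}(1) and standard parabolic estimates), and the mean of an almost periodic function exists; I would record this first.

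For the upper bound on $h$, I would fix a small $\e>0$ and use the family of shifted semi-waves in the hull: since $f$ is almost periodic, for any $g\in H(f)$ the corresponding equation \eqref{aux-main-eq3} has its own semi-wave $\tilde u^{**}_g$ with slope-mean $c^*$ (the mean is shift-invariant). A spatial translate $u^{**}(t,x-\xi)$ with $\xi$ chosen large, possibly multiplied by a constant slightly bigger than $1$ to dominate $V^*(t)\ge u_0$, serves as a supersolution of \eqref{main-eq} on its support; comparison then forces $h(t;u_0,h_0)\le h^{**}(t)+\xi+(\text{lower order})$, hence $\limsup h(t)/t\le c^*$. For the lower bound I would argue that once spreading occurs, after some time $T_0$ the solution near $x=0$ is bounded below by a positive constant (from Proposition \ref{main-results-of-part1}(ii)), so I can fit underneath it a truncated, spatially-shifted semi-wave profile $\max\{u^{**}(t,x-\xi(t)),0\}$ with $\xi(t)\approx -(c^*-\e)t$ as a subsolution of the free-boundary problem on $g(t)<x<h(t)$ — here I use the one-sided (semi-wave) structure: the subsolution's own free boundary moves with asymptotic speed $c^*$, and the Stefan condition $h'=-\mu u_x$ is inherited as an inequality because a subsolution has a steeper slope at its front. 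This gives $\liminf h(t)/t\ge c^*$, hence $\lim h(t)/t=c^*$, and simultaneously $\liminf_{0\le x\le(c^*-\e)t} u(t,x;u_0,h_0)>0$.

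Finally, to upgrade the lower bound to the full statement $\lim_{t\to\infty}\max_{x\le(c^*-\e)t}|u(t,x;u_0,h_0)-V^*(t)|=0$, I would combine the moving subsolution (which already traps $u$ from below by a semi-wave that converges to $V^*(t)$ as one moves into the bulk, i.e. $\lim_{x\to\infty}\tilde u^{**}(t,x)=V^*(t)$ uniformly in $t$) with the trivial supersolution coming from the ODE: any solution of \eqref{main-eq} satisfies $u(t,x)\le \overline V(t)$ where $\overline V$ solves $u_t=uf(t,u)$ with large initial datum, and $\overline V(t)-V^*(t)\to0$ by Lemma \ref{ode-lm}. The squeeze, applied uniformly over the region $x\le(c^*-\e)t$ (using that the subsolution's front sits at $\approx(c^*-\e/2)t$, so at $x\le(c^*-\e)t$ it is already deep in the region where it is close to $V^*$), yields the conclusion. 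The main obstacle I anticipate is the construction and verification of the moving subsolution near the free boundary: one must choose the shift $\xi(t)$ and a small correction so that \emph{both} the PDE inequality in the interior \emph{and} the Stefan inequality $h'_{\text{sub}}\le-\mu (u_{\text{sub}})_x$ at the moving front hold simultaneously, which requires careful use of the uniform (in $t\in\RR$ and in the hull) exponential-type estimates on $\tilde u^{**}$ and its derivatives established in the proof of Theorem \ref{main-thm1}; the almost periodicity (as opposed to periodicity) means these estimates must be genuinely uniform rather than obtained on a compact period interval.
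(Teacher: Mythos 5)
Your high-level plan is the paper's plan (sandwich the free boundary between moving super- and subsolutions built from semi-waves and then let a parameter tend to the sharp value), but there is a concrete gap in the supersolution construction that would make the argument fail as written. If you take $\bar u(t,x)=\alpha\,u^{**}(t,x-\xi_0)$ with $\alpha>1$ fixed and a constant shift $\xi_0$, the PDE inequality does hold because $f_u<0$, but the Stefan inequality $\bar h'(t)\geq-\mu\bar u_x(t,\bar h(t))$ at $\bar h(t)=h^{**}(t)+\xi_0$ fails: the right-hand side equals $\alpha\bigl(-\mu\,u^{**}_x(t,h^{**}(t))\bigr)=\alpha\,(h^{**})'(t)>(h^{**})'(t)=\bar h'(t)$. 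So this is not an upper solution in the sense of Proposition \ref{comparison-principle}/Lemma \ref{com-principle free boun 2}, and the claimed bound $h(t)\leq h^{**}(t)+\xi_0+(\text{lower order})$ does not follow — indeed the correction that rescues the argument is $O((\alpha-1)t)$, not lower order. The paper's Step 2 handles this by scaling the profile and the front \emph{simultaneously} and by the \emph{same} factor: $w(t,x)=(1-\e)^{-2}v_\e(t,\xi(t)-x)$ with $\xi'(t)=(1-\e)^{-2}\mu(v_\e)_x(t,0)$, so the Stefan condition becomes an identity while the excess in the transport term has the right sign for the interior PDE inequality; the subsolution in Step 3 is the mirror image with the factor $(1-\e)^{2}<1$. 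This also corrects your heuristic for the subsolution: scaling the profile down makes the slope at the front \emph{shallower}, which forces the subsolution's front to move \emph{slower} (by the same $(1-\e)^2$ factor), not faster.

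Two further devices in the paper that your proposal only gestures at: (i) it replaces $\tilde u^{**}$ by the semi-waves $v_\e,z_\e$ of the $f\pm\e$-perturbed problems \eqref{semi-wave-eq4}--\eqref{semi-wave-eq5}, whose far-field limits $\bar V_\e,\underline V_\e$ strictly bracket $V^*$ (Step 1); this provides the slack needed to dominate, resp.\ be dominated by, the inner boundary values $\tilde u(t,0)=u(t+T,l^*)$ after the shift by $l^*$, and it makes the Stefan and PDE checks uniform. (ii) The sharp speed $c^*$ is recovered only in the limit: your construction, once corrected, gives $\limsup h/t\leq\alpha c^*$ for each $\alpha>1$ (equivalently $(1-\e)^{-2}$ times the perturbed mean), and the paper passes to the limit $\e\to0$ using $(v_\e)_x(\cdot,0)\to\tilde u^{**}_x(\cdot,0)$ uniformly. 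Your Step 4 squeeze is correct in outline and coincides with the paper's Step 4, but it depends on having the corrected moving barriers just described.
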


\section{Preliminary}

In this section,
we present some preliminary results to be applied in later sections, including basic properties for principal Lyapunov exponents (see subsection 3.1),
the asymptotic
dynamics of some  diffusive KPP equations  with time almost periodic dependence in fixed environments (see subsection 3.2),
 and comparison principles
for free boundary problems (see subsection 3.3).

\subsection{Principal Lyapunov exponents}

Consider \eqref{linearized-eq1}. Let $X=X(l)$, where $X(l)$ is as in \eqref{bounded-domain-space-eq0}. We denote
by $\|\cdot\|$ the norm in $X$ or in $\mathcal{L}(X,X)$.
Recall that for any $v_0\in X$, \eqref{linearized-eq1} has a unique solution $v(t,\cdot;v_0,a)$ and
$$
\lambda(a,l)=\limsup_{t\to\infty}\frac{\ln\|V(t,a)\|}{t}
$$
where $V(t,a)v_0=v(t,\cdot;v_0,a)$.
For any $b\in H(a)$, consider also
\begin{equation}
\label{linearized-eq1-1}
\begin{cases}
v_t=v_{xx}+b(t,x)v,\quad 0<x<l\cr
v_x(t,0)=v(t,l)=0,
\end{cases}
\end{equation}
For any $v_0\in X$,
\eqref{linearized-eq1-1} has also a unique solution $v(t,\cdot;v_0,b)$ with $v(0,\cdot;v_0,b)=v_0$.

\begin{lemma}
\label{lyapunov-exponent-lm0}
There is $\phi_l:H(a)\to X^{++}$   satisfying the following properties.
\begin{itemize}
\item[(i)] $\|\phi_l(b)\|=1$ for any $b\in H(a)$ and $\phi_l:H(a)\to X^{++}$ is continuous.

\item[(ii)] $v(t,\cdot;\phi_l(b),b)=\|v(t,\cdot;\phi_l(b),b)\|\phi_l(b(\cdot+t,\cdot))$.

\item[(iii)] $\lim_{t\to\infty}\frac{\ln \|v(t,\cdot;\phi_l(b),b)\|}{t}=\lambda(a,l)$ uniformly in $b\in H(a)$.
\end{itemize}
\end{lemma}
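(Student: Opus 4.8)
The plan is to construct $\phi_l$ via an exponential separation / principal bundle argument for the skew-product semiflow generated by \eqref{linearized-eq1-1} over the hull $H(a)$. First I would set up the skew-product flow $\Pi_t(v_0, b) = (v(t,\cdot;v_0,b), b\cdot t)$ on $X(l)\times H(a)$, noting that $H(a)$ is compact (since $a$ is almost periodic) and that the linear operators $V(t,b): X(l)\to X(l)$ are compact and strongly positive for $t>0$ — the latter by the strong maximum principle and boundary point lemma applied to \eqref{linearized-eq1-1} with the mixed Neumann--Dirichlet conditions, which forces any nonnegative nonzero solution into $X^{++}(l)$ instantly. Compactness of $V(t,b)$ follows from parabolic smoothing together with the $A^\alpha$-fractional-power setup already fixed in \eqref{bounded-domain-space-eq0}.

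The core step is to invoke the theory of principal Lyapunov exponents and principal Floquet bundles for positive skew-product semiflows (as developed by Mierczy\'nski and Shen, and as used throughout the companion paper \cite{LiLiSh1}): a strongly positive, compact, linear skew-product semiflow over a compact flow admits a continuous family of one-dimensional subspaces $\{\operatorname{span}\,\phi_l(b)\}_{b\in H(a)}$, invariant in the sense of (ii), together with a continuous complementary family of codimension-one subspaces, and an exponential separation between them. One then normalizes $\|\phi_l(b)\|=1$; continuity of $b\mapsto \phi_l(b)$ and positivity $\phi_l(b)\in X^{++}$ come directly from this theory, giving (i). Property (ii) is just the statement that $\operatorname{span}\phi_l(b)$ is the principal bundle: $v(t,\cdot;\phi_l(b),b)$ stays on the ray through $\phi_l(b\cdot t)$, and the positive scalar factor is $\|v(t,\cdot;\phi_l(b),b)\|$ by the normalization. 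For (iii), the principal Lyapunov exponent is $\lambda(a,l)=\lim_{t\to\infty}\frac1t\ln\|v(t,\cdot;\phi_l(b),b)\|$; the point is that this limit exists, is independent of $b\in H(a)$, and is attained uniformly. Uniformity follows from compactness of $H(a)$ plus a standard subadditive/cocycle argument: writing $\gamma(t,b)=\ln\|v(t,\cdot;\phi_l(b),b)\|$ one has the cocycle identity $\gamma(t+s,b)=\gamma(t,b)+\gamma(s,b\cdot t)$, and the continuous dependence on $b$ upgrades the pointwise Furstenberg--Kesten type limit to a uniform one over the compact base; one must also reconcile the $\limsup$ defining $\lambda(a,l)$ with the genuine limit along the principal direction, which holds because the principal bundle dominates every other direction by the exponential separation, so $\|V(t,b)\|$ and $\|v(t,\cdot;\phi_l(b),b)\|$ have the same exponential growth rate.

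The main obstacle I expect is verifying the hypotheses that make the abstract exponential-separation machinery applicable — specifically strong positivity of $V(t,b)$ in the ordered Banach space $X(l)=\mathcal D(A^\alpha)$ with its cone $X^+(l)$, rather than merely in $C([0,l])$. This requires showing that a solution that is nonnegative and nonzero at time $0$ lies in the interior $X^{++}(l)=\operatorname{Int}(X^+(l))$ for $t>0$; the interior here is characterized by strict positivity in $[0,l)$ together with the correct sign of the spatial derivative at $x=l$ (Hopf lemma at the Dirichlet endpoint) and $v_x(\cdot,0)=0$ at the Neumann endpoint. Handling the compatibility at the two different boundary conditions, and doing so uniformly in $b\in H(a)$ so that the constants in the exponential separation do not degenerate, is the delicate part; once that is in place, (i)--(iii) are essentially citations to the established theory.
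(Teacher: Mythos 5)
Your approach is exactly the one the paper intends: the paper's entire proof consists of a citation to Mierczy\'nski and Shen \cite{MiSh1} (see also \cite{MiSh3, ShYi}), which is precisely the exponential-separation / principal Floquet bundle theory for strongly positive, compact, linear skew-product semiflows over a compact base that you invoke and whose hypotheses you verify. Your write-up simply spells out the verification (strong positivity of $V(t,b)$ on the fractional power space, compactness via parabolic smoothing, compactness of the hull, the cocycle argument upgrading the pointwise exponential growth rate to a uniform one and reconciling it with the $\limsup$ defining $\lambda(a,l)$) that the paper leaves implicit.
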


\begin{proof}
It follows from \cite{MiSh1} (see also \cite{MiSh3, ShYi}).
\end{proof}

\begin{lemma}
\label{lyapunov-exponent-lm3} Suppose that $a(t,x)\equiv a(t)$. Then
$$
\lambda(a,l)=\hat a+\lambda_0(l),
$$
where $\hat a=\lim_{t\to\infty}\frac{1}{t}\int_0^t a(s)ds$ and $\lambda_0(l)$ is the principal
eigenvalue of
\begin{equation}
\label{egv-eq}
\begin{cases}
u_{xx}=\lambda u,\quad 0<x<l\cr
u_x(0)=u(l)=0.
\end{cases}
\end{equation}
\end{lemma}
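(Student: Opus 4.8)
The plan is to reduce the time-almost-periodic linear equation with $x$-independent potential to a separated-variables computation, and then invoke the general theory of principal Lyapunov exponents (Lemma \ref{lyapunov-exponent-lm0}) together with the classical Dirichlet--Neumann eigenvalue problem \eqref{egv-eq}. First I would observe that when $a(t,x)\equiv a(t)$, equation \eqref{linearized-eq1} admits separated solutions: if $\varphi_0(x)$ is the principal eigenfunction of \eqref{egv-eq} (so $\varphi_0>0$ on $[0,l)$, $\varphi_0'(0)=\varphi_0(l)=0$, $\varphi_0''=\lambda_0(l)\varphi_0$), and if $\psi(t)$ solves the scalar ODE $\psi'(t)=\bigl(a(t)+\lambda_0(l)\bigr)\psi(t)$, then $v(t,x)=\psi(t)\varphi_0(x)$ solves \eqref{linearized-eq1}. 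Explicitly $\psi(t)=\exp\!\bigl(\int_0^t(a(s)+\lambda_0(l))\,ds\bigr)$, so that $v(t,\cdot;\varphi_0,a)=\psi(t)\varphi_0$.

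Next I would use this to identify $\lambda(a,l)$. Since $\varphi_0\in X^{++}(l)$ lies in the interior of the positive cone, and the semigroup $V(t,a)$ is strongly positive (parabolic maximum principle), the growth rate along $\varphi_0$ governs the principal Lyapunov exponent. Concretely, by Lemma \ref{lyapunov-exponent-lm0}(iii) (applied with $b=a$, noting that $\varphi_0$ must coincide up to normalization with $\phi_l(a)$ because the separated solution stays a positive multiple of $\varphi_0$ for all $t$, matching property (ii)), one has
\begin{equation*}
\lambda(a,l)=\lim_{t\to\infty}\frac{\ln\|v(t,\cdot;\varphi_0,a)\|}{t}=\lim_{t\to\infty}\frac{\ln\bigl(\psi(t)\|\varphi_0\|\bigr)}{t}=\lim_{t\to\infty}\frac{1}{t}\int_0^t\bigl(a(s)+\lambda_0(l)\bigr)\,ds.
\end{equation*}
Because $a(t)$ is almost periodic, the mean value $\hat a=\lim_{t\to\infty}\frac1t\int_0^t a(s)\,ds$ exists, and the last limit equals $\hat a+\lambda_0(l)$, which is the claimed identity.

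For full rigor I would need to justify two points: that $\varphi_0$ is indeed (a scalar multiple of) the function $\phi_l(a)$ from Lemma \ref{lyapunov-exponent-lm0}, and that $\varphi_0\in X^{++}(l)=\mathcal D(A^\alpha)\cap \mathrm{Int}(X^+(l))$ with the required regularity. The first follows from uniqueness of the normalized positive entire solution direction in Lemma \ref{lyapunov-exponent-lm0}: the separated solution provides a candidate continuous family $b\mapsto\phi_l(b)$ that is constant (equal to $\varphi_0/\|\varphi_0\|$) when $a$ is $x$-independent, since every hull element $b\in H(a)$ is again $x$-independent with the same eigenfunction $\varphi_0$; by the uniqueness built into that lemma these must agree. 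The second is standard elliptic regularity for \eqref{egv-eq}. I expect the only mildly delicate step to be this matching of the abstract object $\phi_l(a)$ with the concrete eigenfunction $\varphi_0$ — everything else is the routine observation that $x$-independence of the potential decouples the equation. Alternatively, one can bypass Lemma \ref{lyapunov-exponent-lm0} entirely and argue directly: bound an arbitrary $v_0\in X^+(l)$ above and below by multiples of $\varphi_0$ (possible since $\varphi_0\in X^{++}(l)$), propagate these bounds by the order-preserving flow, and conclude that $\ln\|V(t,a)\|/t\to\hat a+\lambda_0(l)$, which gives the $\limsup$ defining $\lambda(a,l)$.
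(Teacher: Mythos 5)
Your proposal is correct but takes a genuinely different route from the paper. You build the explicit separated solution $v(t,x)=\exp\!\bigl(\int_0^t(a(s)+\lambda_0(l))\,ds\bigr)\varphi_0(x)$ and then invoke the exponential-separation machinery of Lemma \ref{lyapunov-exponent-lm0} (Mierczy\'nski--Shen) to identify the growth rate along $\varphi_0$ with $\lambda(a,l)$; as you note, the one step that requires justification is matching the abstract $\phi_l(a)$ with $\varphi_0/\|\varphi_0\|$, which does follow from the essential uniqueness of the principal bundle once you observe the separated solution has constant direction. The paper avoids this entirely by a simpler device: set $\tilde v(t,x)=v(t,x)\,e^{-\int_0^t a(s)\,ds}$. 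Then $\tilde v$ satisfies the pure heat equation with the same mixed Neumann--Dirichlet boundary conditions, so $\|V(t,a)\|=e^{\int_0^t a(s)\,ds}\|V(t,0)\|$, and taking $\tfrac{1}{t}\ln$ and $t\to\infty$ gives $\lambda(a,l)=\hat a+\lambda(0,l)$ directly (the limit $\hat a$ exists by almost periodicity), with $\lambda(0,l)=\lambda_0(l)$ a standard fact about the heat semigroup. The paper's change of variables is more elementary in that it never needs the structural Lemma \ref{lyapunov-exponent-lm0} or elliptic regularity for $\varphi_0$, and it handles the operator-norm $\limsup$ over \emph{all} initial data in one stroke rather than only along the principal direction; your alternative sketch (sandwiching general $v_0$ between multiples of $\varphi_0$ and propagating by order preservation) also works but requires a bit of care since the operator norm runs over signed $v_0$, not just positive ones. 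Both approaches buy the same conclusion; the paper's is the shorter path here.
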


\begin{proof}
Let $\tilde v(t,x)=v(t,x)e^{-\int_0^t a(s)ds}$.  Then \eqref{linearized-eq1} becomes
$$
\begin{cases}
\tilde v_t=\tilde v_{xx},\quad 0<x<l\cr
\tilde v_x(t,0)=\tilde v(t,l)=0.
\end{cases}
$$
It then follows that $\lambda(a,l)=\hat a+\lambda(0,l)$. It is clear that
$\lambda(0,l)=\lambda_0(l)$. The lemma then follows.
\end{proof}

\begin{remark}
\label{Lyapunov-exp-rk}
(1) Principal Lyapunov exponent theory for \eqref{linearized-eq1} also holds for \eqref{aaux-linearized-eq1}.

\medskip

(2)
When $a(t,x)\equiv a(t)$,  $\tilde\lambda(a,\gamma,l)=\hat a+\tilde \lambda(0,\gamma,l)$. Note that
$\lambda(0,l)=-\frac{\pi^2}{4 l^2}$ and $\tilde \lambda(0,\gamma,l)=-\Big(\frac{\gamma^2}{4}+\frac{\pi^2}{l^2}\Big)$.
Hence $\lambda(a,l)>0$ and $\tilde\lambda(a,0,l)>0$ for $l\gg 1$.
\end{remark}

\subsection{Asymptotic dynamics of diffusive KPP equations  with time almost periodic dependence
in fixed domains}

In this subsection, we consider the asymptotic dynamics of the following
 KPP equations,
\begin{equation}
\label{ode-eq1}
u_t=uf(t,u),
\end{equation}
\begin{equation}
\label{kpp-eq1}
\begin{cases} u_t=u_{xx}+uf(t,u),\quad x>0\cr
 u_x(t,0)=0,
\end{cases}
\end{equation}
and
\begin{equation}
\label{kpp-eq2}
\begin{cases}
u_t=u_{xx}-\epsilon \mu u_x+uf(t,u),\quad x>0\cr
u(t,0)=0.
\end{cases}
\end{equation}

Throughout this subsection, we assume (H1) and (H2).
Let
$$
H(f)={\rm cl}\{f(\cdot+\tau,\cdot)\,|\, \tau\in\RR\}
$$
where the closure is taken in the open compact topology. Observe that for any $g\in H(f)$, $g$ also satisfies (H1) and (H2).

First of all, consider \eqref{ode-eq1} and
\begin{equation}
\label{ode-eq1-hull}
u_t=ug(t,u)
\end{equation}
for any $g\in H(f)$. By fundamental theory for ordinary differential equations,
 for any $u_0\in\RR$ and $g\in H(f)$, \eqref{ode-eq1-hull} has a unique (local) solution
$u(t;u_0,g)$ with $u(0;u_0,g)=u_0$. By (H1), for any $u_0\ge 0$, $u(t;u_0,g)\ge 0$ and $u(t;u_0,g)$ exists for all $t\ge 0$.

\begin{lemma}
\label{ode-lm}
For any $g\in H(f)$, there is a unique stable almost periodic  positive solution $u_g(t)$ of \eqref{ode-eq1-hull}.
\end{lemma}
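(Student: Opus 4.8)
The plan is to treat $u_g(t)$ as the attracting fixed point of the one-dimensional skew-product flow generated by the scalar ODE $u_t = u g(t,u)$ over the hull $H(f)$, and to produce it as a squeeze between the zero solution and a large constant supersolution. First I would fix $M>0$ from (H1) so that $g(t,u)<0$ for all $u\ge M$ and all $g\in H(f)$ (the constant $M$ can be taken uniform over the hull since $f$ is almost periodic in $t$ and $g$ is a uniform limit of time-translates of $f$). For any $u_0\in(0,M]$, comparison with the constant super- and sub-solutions shows $0<u(t;u_0,g)\le M$ for all $t\ge0$, so the positive solutions form a forward-invariant, order-preserving family on $(0,M]$.

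Next I would extract the almost periodic solution. The key monotonicity fact is that for $0<u_0^{(1)}<u_0^{(2)}$ one has $u(t;u_0^{(1)},g)<u(t;u_0^{(2)},g)$ by uniqueness, and moreover $g(t,u)$ being strictly decreasing in $u$ forces a contraction: writing $w=\log u$, the equation becomes $w_t=g(t,e^w)$, whose right-hand side is strictly decreasing in $w$, so $\frac{d}{dt}(w^{(2)}-w^{(1)})<0$ whenever $w^{(2)}>w^{(1)}$, i.e. the difference of logarithms is strictly decreasing and in fact decays. One then shows that the pullback limit $\lim_{s\to\infty}u(t; \tfrac{M}{2}, g\cdot(-s))$ exists, is positive (using (H3), or rather the fact that $\hat a=\lim_{t}\frac1t\int_0^t f(s,0)ds>0$ is \emph{not} needed here — wait, it is, to keep the solution bounded away from $0$; I would invoke that $\liminf_{t\to-\infty}$ pullback stays positive because near $u=0$ the linearization has positive average growth rate, so small solutions are repelled from $0$), and defines an entire bounded solution $u_g(t)$; its almost periodicity in $t$ follows from the uniform contraction together with the almost periodicity of $g$, via the standard Amerio–Sell criterion (a bounded entire solution of an almost periodic equation which is the unique bounded entire solution, and whose flow is uniformly contracting on the relevant region, is almost periodic). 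Uniqueness among positive entire bounded solutions, and stability (exponential attraction of all positive solutions), both follow from the strict decrease of $w^{(2)}-w^{(1)}$ upgraded to a uniform exponential estimate, using $\sup_{t,u\ge0} f_u(t,u)<0$ from (H1): indeed $\frac{d}{dt}(u^{(2)}-u^{(1)}) = (u^{(2)}-u^{(1)})\,\big[\text{something}\big] + u^{(2)}\big(g(t,u^{(2)})-g(t,u^{(1)})\big)$, and the last term is $\le -\delta\, u^{(2)}(u^{(2)}-u^{(1)})$ for some $\delta>0$ once solutions are bounded below.

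I expect the main obstacle to be the step showing the pullback/entire solution stays \emph{uniformly bounded away from zero}, i.e. that the almost periodic solution is genuinely positive rather than collapsing to $0$. This is where the positivity of the average $\lim_{t\to\infty}\frac1t\int_0^t f(s,0)\,ds$ must enter: one linearizes at $u=0$, where the equation is $v_t=f(t,0)v$, whose principal Lyapunov exponent is exactly that average and is positive, so $u\equiv0$ is a linearly unstable, uniformly repelling equilibrium; a standard argument (e.g. constructing a subsolution of the form $\e\, e^{\int_0^t f(s,0)ds}$ for small time and iterating, or a uniform persistence argument over the compact hull $H(f)$) then gives a constant $m>0$, independent of $g\in H(f)$, with $\liminf_{t\to\infty} u(t;u_0,g)\ge m$ for every $u_0>0$. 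Granting that lower bound, the contraction and Amerio-type arguments are routine, and the uniform-in-$g\in H(f)$ statements come for free because all estimates were made uniform over the hull.
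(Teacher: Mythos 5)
The paper's own proof of Lemma~\ref{ode-lm} is a single citation to \cite[Theorem~4.1]{ShYi0}, so it does not carry out an argument; your outline is, in effect, a self-contained reproof along the lines one would expect from the scalar almost-periodic Kolmogorov literature (skew-product flow over the compact hull $H(f)$, contraction between ordered positive solutions, uniqueness forces almost periodicity). The strategy is sound: the change of variable $w=\log u$ turns $\sup_{t,u\ge 0}f_u<0$ from (H1) into a strict one-sided contraction of ordered solutions; (H3) gives uniform repulsion from $u=0$ because the mean $\lim_{t\to\infty}\frac{1}{t}\int_0^t g(s,0)\,ds$ equals the mean for $f$ for every $g\in H(f)$ (means are translation-invariant and pass to uniform limits), so the lower bound $m>0$ can indeed be taken uniform over the hull; and uniqueness of the bounded entire positive solution plus continuity of $g\mapsto u_g(0)$ on the compact hull gives almost periodicity (this is the clean form of the Amerio-type argument: $u_g(t+\tau)=u_{g\cdot\tau}(t)$ by uniqueness). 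Two points to tighten. First, the pullback limit should be $\lim_{s\to\infty}u(t+s;\tfrac{M}{2},g\cdot(-s))$, not $\lim_{s\to\infty}u(t;\tfrac{M}{2},g\cdot(-s))$; the latter does not track a single trajectory of the nonautonomous flow. Second, the contraction estimate
$\frac{d}{dt}\bigl(w^{(2)}-w^{(1)}\bigr)\le -\delta_0\, e^{w^{(1)}}\bigl(w^{(2)}-w^{(1)}\bigr)$
yields a uniform exponential rate only after the lower bound $u^{(1)}\ge m$ is in hand, which is exactly the circularity you flagged; so the steps should be ordered as: (i) use (H3) and a linearization at $0$ to establish uniform persistence $\liminf_{t\to\infty}u(t;u_0,g)\ge m>0$ uniformly in $g\in H(f)$ and $u_0>0$, then (ii) run the contraction on the attracting band $[m,M]$. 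Compared with citing Shen--Yi, your route buys transparency and avoids matching hypotheses against the reference; the citation buys brevity and inherits the stability statement already verified over compact hulls.
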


\begin{proof}
It follows from \cite[Theorem 4.1]{ShYi0}.
\end{proof}

Next, consider \eqref{kpp-eq1} and
\begin{equation}
\label{kpp-eq1-hull}
\begin{cases} u_t=u_{xx}+ug(t,u),\quad x>0\cr
 u_x(t,0)=0.
\end{cases}
\end{equation}
for any $g\in H(f)$.

Let
$$
\tilde X_0= C_{\rm unif}^b([0,\infty))
$$
with the norm $\|u\|=\sup_{x\in[0,\infty)}|u(x)|$ for $u\in \tilde X_0$. The operator
$A=\Delta$ with $\mathcal{D}(A)=\{u\in  \tilde X_0\,|\, u^{'}(\cdot),u^{''}(\cdot)\in \tilde X_0,\, u^{'}(0)=0\}$ is a sectorial operator. Let
\begin{equation}
\label{tilde-x-space}
\tilde X= \text{a fractional power space of}\,\, A\,\, \text{such that for any}\,\, u\in \tilde X,\,\, u^{'}(\cdot)\in C_{\rm unif}^b([0,\infty)).
\end{equation}
Let
$$
\tilde X^+=\{u\in \tilde X\,|\, u(x)\ge 0\quad {\rm for}\quad x\in\RR^+\}
$$
and
$$
\tilde X^{++}=\{u\in \tilde X^+\,|\, \inf_{x\ge 0}u(x)>0\}.
$$

By general semigroup theory, for any $u_0\in \tilde X$ and any $g\in H(f)$, there is a
unique (local) solution $u(t,\cdot;u_0,g)$ with
$u(0,x;u_0,g)=u_0(x)$. By comparison principle for parabolic equations, for any $u_0\in\tilde X^+$,
$u(t,\cdot;u_0,g)\in\tilde X^+$ and $u(t,\cdot;u_0,g)$ exists for all $t\ge 0$. If $u_0\in\tilde X^{++}$, then
$u(t,\cdot;u_0,g)\in\tilde X^{++}$ for all $t\ge 0$.

\begin{remark}
\label{kpp-rk1}
For any $g\in H(f)$, $u(t,x)=u_g(t)$ is an almost periodic solution of \eqref{kpp-eq1-hull}. Moreover, for any
$u_0\in \tilde X^{++}$,
$$
u(t,x;u_0,g)-u_g(t)\to 0
$$
as $t\to\infty$ uniformly in $x\ge 0$.
\end{remark}

Consider now \eqref{kpp-eq2} and
\begin{equation}
\label{kpp-eq2-hull}
\begin{cases}
u_t=u_{xx}-\epsilon \mu u_x+ug(t,u),\quad x>0\cr
u(t,0)=0
\end{cases}
\end{equation}
for any $g\in H(f)$.

Let
$$
\hat X_0=\{u\in C_{\rm unif}^b([0,\infty))\,|\, u(0)=0\}
$$
with the norm $\|u\|=\sup_{x\in[0,\infty)}|u(x)|$ for $u\in \hat X_0$. The operator
$A=\Delta$ with $\mathcal{D}(A)=\{u\in \hat X_0\,|\, u^{'}(\cdot),u^{''}(\cdot)\in C_{\rm unif}^b([0,\infty))\}$ is a sectorial operator. Let
\begin{equation}
\label{hat-x-space}
\hat X= \text{a fractional power space of}\,\, A\,\, \text{such that for any}\,\, u\in \hat X,\,\, u^{'}(\cdot)\in C_{\rm unif}^b([0,\infty)).
\end{equation}
Let
$$
\hat X^+=\{u\in \hat X\,|\, u(x)\ge 0\quad {\rm for}\quad x\in\RR^+\}
$$
and
$$
\hat X^{++}=\{u\in \hat X^+\,|\, \inf_{x\ge \epsilon}u(x)>0\quad\text{for any}\,\,\epsilon>0\,\,{\rm and}\,\, u^{'}(0)>0\}.
$$

 By general semigroup theory, for any $u_0\in \hat X$ and any $g\in H(f)$, there is a
unique (local) solution $u(t,\cdot;u_0,g)$ with
$u(0,x;u_0,g)=u_0(x)$. By comparison principle for parabolic equations, for any $u_0\in\hat X^+$,
$u(t,\cdot;u_0,g)\in\hat X^+$ and $u(t,\cdot;u_0,g)$ exists for all $t\ge 0$. If $u_0\in\hat X^{++}$, then
$u(t,\cdot;u_0,g)\in\hat X^{++}$ for all $t\ge 0$.

By Remark \ref{Lyapunov-exp-rk},
there are  $\tilde l^*>0$ and $\epsilon^*>0$ such that
$\tilde\lambda(a,\epsilon\mu,l)>0$ for $l>\tilde l^*$ and $0<\epsilon<\epsilon^*$.


\begin{lemma}
\label{semi-wave-lm1-1}
Let $\epsilon>0$ be given such that $\tilde\lambda(a,\epsilon\mu,l)>0$ for $l\gg 1$.
For given $g\in H(f)$,
Consider
\begin{equation}
\label{aux-semi-wave-eq1}
\begin{cases}
u_t=u_{xx}-\epsilon \mu u_x+ ug(t,u),\quad x>0\cr
u(t,0)=0.
\end{cases}
\end{equation}
For any $u_0\in \hat X^{++}$, $\inf_{t\ge 0, g\in H(f)}\p_x u_{\epsilon}(t,0;u_0,g)>0$.
\end{lemma}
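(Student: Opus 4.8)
The plan is to establish the bound on $\p_x u_\epsilon(t,0;u_0,g)$ via a compactness-type argument combined with the lower estimate coming from the positive principal Lyapunov exponent $\tilde\lambda(a,\epsilon\mu,l)>0$. First I would reduce to showing a uniform positive lower bound on the solution itself on a fixed interval: namely, I claim there are $l>\tilde l^*$, $\delta>0$, and $\tau>0$ such that $u_\epsilon(\tau,x;u_0,g)\ge \delta \phi(x)$ for all $g\in H(f)$, where $\phi$ is the (positive, Hopf-boundary-normalized) principal eigenfunction of the operator $v\mapsto v_{xx}-\epsilon\mu v_x$ on $(0,l)$ with Dirichlet conditions at $0$ and $l$. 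To get this, note that $u_0\in\hat X^{++}$ gives $u_0>0$ on $(0,\infty)$ and $u_0'(0)>0$, so $u_0\ge \delta_0\phi$ on $[0,l]$ for some $\delta_0>0$; since $f$ is monostable, for $u$ in a bounded positive set there is a constant $\kappa$ with $f(t,u)\ge f(t,0)-\kappa u \ge a(t)-\kappa\norminfty{u_0}$ uniformly over $g\in H(f)$ (using boundedness of $Df$ from (H1)), so $u_\epsilon$ dominates the solution of the corresponding linear equation with coefficient $a(t)-C$; but to keep the comparison honest one instead fixes a small $\delta_1$ and observes that on the set where $u\le \delta_1$ we have $f\ge a(t)-\kappa\delta_1$, and one can choose $l$ large and $\delta_1$ small so that $\tilde\lambda(a-\kappa\delta_1,\epsilon\mu,l)>0$ still (by continuity of the principal Lyapunov exponent, Lemma \ref{lyapunov-exponent-lm0}/Remark \ref{Lyapunov-exp-rk}). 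Then a standard sub-solution of the form $\underline{u}(t,x)=\min\{\delta_1, c(t)\phi(x)\}$ with $c(t)$ growing exponentially at rate $\tilde\lambda$ shows $u_\epsilon(t,\cdot;u_0,g)\ge \delta_1\phi(\cdot)/\norminfty{\phi}$ on $[0,l]$ for all $t\ge \tau$ and all $g$, with $\tau$ independent of $g$.

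Next, once $u_\epsilon(t,x;u_0,g)\ge c_0\,\phi(x)$ on $[0,l]$ for all $t\ge \tau$ and all $g\in H(f)$, I would use parabolic boundary regularity at $x=0$: the function $w=u_\epsilon-c_0\phi$ is nonnegative, vanishes at $x=0$, and satisfies a parabolic equation with bounded coefficients on $[0,l]$ (bounded because $u_\epsilon$ stays in a fixed bounded set, again by (H1) and comparison with the ODE solution $u_g$ of Lemma \ref{ode-lm}). Hence $\p_x u_\epsilon(t,0;u_0,g)\ge c_0\phi'(0)>0$ directly, since $w\ge 0$ with $w(t,0)=0$ forces $\p_x w(t,0)\ge 0$. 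Actually the cleanest route is to not even need regularity of $w$ beyond the Hopf lemma applied to $u_\epsilon$ at $x=0$: $u_\epsilon\ge c_0\phi$ pointwise gives $\p_x u_\epsilon(t,0)\ge c_0\p_x\phi(0)=:\beta>0$ by comparing the two nonnegative functions that agree at $x=0$. So $\inf_{t\ge\tau,\,g\in H(f)}\p_x u_\epsilon(t,0;u_0,g)\ge\beta>0$.

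It remains to handle the transient interval $t\in[0,\tau]$. Here I would invoke compactness: $H(f)$ is compact in the open-compact topology, the map $(t,g)\mapsto \p_x u_\epsilon(t,0;u_0,g)$ is continuous on the compact set $[0,\tau]\times H(f)$ (continuous dependence of solutions of the parabolic problem on the coefficient $g$, using that $\hat X$ embeds into $C^1$), and it is strictly positive for every $(t,g)$ — because for $t>0$ the strong maximum principle / Hopf lemma gives $u_\epsilon(t,\cdot;u_0,g)\in\hat X^{++}$, hence $\p_x u_\epsilon(t,0)>0$, and at $t=0$ it equals $u_0'(0)>0$. A continuous strictly positive function on a compact set has a positive minimum, so $\inf_{t\in[0,\tau],g}\p_x u_\epsilon(t,0;u_0,g)>0$. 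Combining the two ranges of $t$ yields the claim.

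The main obstacle I anticipate is the construction of the $g$-uniform sub-solution in the first step: one must carefully choose the interval length $l$, the threshold $\delta_1$, and the eigenfunction $\phi$ so that (i) $\tilde\lambda$ remains positive after the $-\kappa\delta_1$ perturbation of the zeroth-order coefficient, (ii) $u_0$ genuinely dominates a multiple of $\phi$ on $[0,l]$ (which uses $u_0'(0)>0$, i.e. $u_0\in\hat X^{++}$), and (iii) the truncation $\min\{\delta_1,c(t)\phi\}$ is indeed a sub-solution, which requires the monostability bound $f(t,u)\ge f(t,0)-\kappa u$ to hold uniformly in $g\in H(f)$ — this is where (H1) (boundedness of $Df$) and the fact that every $g\in H(f)$ inherits (H1)–(H2) are essential. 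Everything after that is standard parabolic comparison plus compactness of the hull.
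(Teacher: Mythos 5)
Your overall strategy matches the paper's: compare $u_\epsilon$ from below with the Dirichlet problem on a fixed bounded interval $[0,l]$ chosen so that the principal Lyapunov exponent $\tilde\lambda(a,\epsilon\mu,l)$ is positive, extract a lower bound on $\p_x u_\epsilon(t,0)$ for large $t$ from the truncated problem, and handle the transient $t\in[0,\tau]$ by compactness of $H(f)$, continuity, Hopf, and $u_0'(0)>0$. The paper reaches the large-time lower bound by invoking the unique time almost periodic positive solution $u^l_{\epsilon,g}$ of the truncated Dirichlet problem together with the fact that all solutions from positive data converge to it uniformly in $g\in H(f)$ (including their boundary derivatives); you try to replace that structural input with an explicit sub-solution.

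That replacement, as written, has a gap. You propose the sub-solution $\underline u(t,x)=\min\{\delta_1,\,c(t)\phi(x)\}$ with $c(t)$ growing at rate $\tilde\lambda(a-\kappa\delta_1,\epsilon\mu,l)$, after bounding $f(t,u)\ge f(t,0)-\kappa u$ for small $u$. The growth of $c(t)\phi$ while $c(t)\phi\le\delta_1$ is fine, but once $c(t)\phi$ reaches $\delta_1$ anywhere the constant branch of the $\min$ must itself be a sub-solution, i.e.\ one needs $\delta_1\,g(t,\delta_1)\ge 0$, equivalently $g(t,\delta_1)\ge 0$, for all $t$. Under (H1)--(H3) only the time average $\lim_{t\to\infty}\frac{1}{t}\int_0^t f(s,0)\,ds$ is assumed positive; $f(t,0)$ and hence $f(t,\delta_1)$ may be negative on intervals of $t$, so the truncated function $\min\{\delta_1,c(t)\phi\}$ need not be a sub-solution, and the claimed persistent bound $u_\epsilon(t,\cdot)\ge \delta_1\phi/\norminfty{\phi}$ for all $t\ge\tau$ does not follow. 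This is exactly the difficulty that the paper's use of the almost periodic attractor $u^l_{\epsilon,g}$ is designed to bypass: the uniform stability of $u^l_{\epsilon,g}$ (in $t$ and in $g\in H(f)$) supplies a $g$-independent persistent lower bound on $\p_x u^l_\epsilon(t,0;\tilde u_0,g)$ without needing a pointwise-in-$t$ sign condition on $f$. To repair your version you would either need a time-dependent truncation level (e.g.\ the small almost periodic solution of an ODE sub-problem), at which point you are essentially reconstructing $u^l_{\epsilon,g}$, or you should directly invoke the existence and uniform stability of $u^l_{\epsilon,g}$ as the paper does. The remaining parts of your argument — the reduction to $[0,l]$, the Hopf-type comparison $\p_x u_\epsilon(t,0)\ge c_0\phi'(0)$ once a pointwise lower bound is in hand, and the compactness argument on $[0,\tau]\times H(f)$ — are sound and consistent with the paper.
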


\begin{proof}
First of all, we consider the following problem
\begin{equation}
\label{aux-linear-semi-wave-eq1}
\begin{cases}
u_t=u_{xx}-\epsilon \mu u_x+ ug(t,u),\quad 0<x<l\cr
u(t,0)=u(t,l)=0
\end{cases}
\end{equation}
Since $\tilde\lambda(a,\epsilon\mu,l)>0$, there is a unique time almost periodic positive stable
solution $u^l_{\epsilon,g}(t,x)$ of \eqref{aux-linear-semi-wave-eq1}. Moreover, for any
$\tilde u_0\in C([0,l])$ with $\tilde u_0(0)=\tilde u_0(l)=0$ and $\tilde u_0(x)>0$ for $x\in (0,l)$,
$$
\lim_{t\to\infty}|u^l_\epsilon(t,x;\tilde u_0,g)-u^l_{\epsilon,g}(t,x)|=0
$$
uniformly in $x\in [0,l]$ and $g\in H(f)$,
and
$$
\lim_{t\to\infty}|\p_x u^l_\epsilon(t,0;\tilde u_0,g)-\p_x u^l_{\epsilon,g}(t,0)|=0
$$
uniformly in $g\in H(f)$, where $u^l_\epsilon(t,x;\tilde u_0,g)$ is the solution of 
\eqref{aux-linear-semi-wave-eq1} with $u^l_\epsilon(0,x;\tilde u_0,g)=\tilde u_0(x)$.

Now for any $u_0\in \hat X^{++}$, choose $\tilde u_0\in C([0,l])$ such that $\tilde u_0(0)=\tilde u_0(l)=0$,
$\tilde u_0(x)>0$ for $x\in (0,l)$, $\p_x u_0(0)>0$, and 
$$
\tilde u_0(x)\le u_0(x)\quad {\rm for}\quad 0\le x\le l.
$$
Then by comparison principle for parabolic equations, we have
$$
u_\epsilon(t,x;u_0,g)\ge u^l_\epsilon(t,x;\tilde u_0,g)\quad {\rm for}\quad 0\le x\le l.
$$
This implies that
$$
\p_x u_\epsilon(t,0;u_0,g)\ge \p_x u^l_\epsilon(t,0;\tilde u_0,g)
$$
and then
$$
\inf_{t\ge 0,g\in H(f)}\p_x u_\epsilon(t,0;u_0,g)>0.
$$
This proves the lemma.
\end{proof}

\subsection{Comparison principal for free boundary problems}

In order for later application, we present some comparison principles for free boundary problems
in this subsection.

\begin{proposition}
\label{comparison-principle} Let $f(t,u)$ be a function satisfying
(H1) and (H2). Suppose that $T\in (0,\infty)$, $\bar{h}
\in C^{1}([0,T])$, $\bar{u}\in C(\bar{D}^{*}_{T})\cap C ^{1,2}(D^{*}_{T})$
with $D^{*}_{T}=\{(t,x)\in\R^{2}:0<t\leq T, 0<x<\bar{h}(t)\}$, and
\begin{equation*}
\begin{cases}
\bar{u}_t\geq\bar{u}_{xx}+\bar{u}f(t,\bar{u}),\quad &t>0, 0<x<\bar{h}(t)
\cr \bar{h}^{'}(t)\geq-\mu\bar{u}_x(t,\bar{h}(t)),\quad &t>0, \cr
\bar{u}_x(t,0)\leq 0,u(t,\bar{h}(t))=0,\quad  &t>0
\end{cases}
\end{equation*}
If $h_{0}\leq \bar{h}(0)$ and $u_{0}(x)\leq\bar{u}(0,x)$ in $[0,h_{0}]$,
then the solution $(u,h)$ of the free boundary problem \eqref{main-eq} satisfies
$$h(t)\leq \bar{h}(t)  \ for\ all\ t\in(0,T],\ u(t,x)\leq\bar{u}(t,x) \ for \
t\in(0,T]\ and\ x\in(0,h(t)).$$
\end{proposition}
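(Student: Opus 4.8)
The plan is to prove the comparison principle by a two–stage argument, exactly as one does for the classical Stefan/free boundary problems: first establish that the free boundaries stay ordered, i.e. $h(t)\le\bar h(t)$ on $[0,T]$, and then, on the region where both functions are defined (which is the smaller region $0<x<h(t)$), apply the ordinary parabolic comparison principle to conclude $u(t,x)\le\bar u(t,x)$. The subtlety is that the two inequalities are coupled through the free boundary condition, so they must be proved simultaneously rather than one after the other.

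\medskip

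First I would set up the argument by contradiction: suppose the set $\{t\in(0,T]:h(t)>\bar h(t)\}$ is nonempty and let $t^*=\inf$ of this set. Since $h_0\le\bar h(0)$ and both $h,\bar h$ are continuous, $t^*\ge 0$, $h(t^*)=\bar h(t^*)$, and $h(t)\le\bar h(t)$ for all $t\in[0,t^*]$. On the time interval $[0,t^*]$ we then have $\{(t,x):0<t\le t^*,\,0<x<h(t)\}\subset D^*_{t^*}\cup\{x=0\}$, so $\bar u$ is defined on the closure of the domain of $u$ there. On this common region, $w:=\bar u-u$ satisfies $w_t-w_{xx}-c(t,x)w\ge 0$ for a suitable bounded coefficient $c$ (coming from writing $\bar uf(t,\bar u)-uf(t,u)=c(t,x)(\bar u-u)$ via the mean value theorem, using that $f$ is $C^1$), with $w(0,x)=\bar u(0,x)-u_0(x)\ge0$, with $w_x(t,0)=\bar u_x(t,0)-u_x(t,0)\le -u_x(t,0)$; here one must be slightly careful — the cleaner route is to note $u_x(t,0)=0$ from the boundary condition $u_x(t,0)=0$ in \eqref{main-eq}, hence $w_x(t,0)=\bar u_x(t,0)\le 0$, so $w$ has a (sign-favorable) Neumann-type condition at $x=0$ — and on the lateral boundary $x=h(t)$ we have $w(t,h(t))=\bar u(t,h(t))-u(t,h(t))=\bar u(t,h(t))-0=\bar u(t,h(t))\ge 0$, the last inequality because $\bar u\ge0$ wherever it is defined (which follows since $u(t,\bar h(t))=0$ and $\bar u$ is a supersolution bounded below appropriately — if $\bar u$ is not a priori nonnegative one applies the maximum principle to $\bar u$ itself first). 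The parabolic maximum principle (with the Neumann condition at $x=0$ and Dirichlet-type data at $x=h(t)$) then yields $w\ge 0$, i.e. $u\le\bar u$ on $[0,t^*]\times\{0<x<h(t)\}$, and moreover by the Hopf lemma $w>0$ in the interior and $w_x(t^*,h(t^*))<0$ unless $w\equiv0$.

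\medskip

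Now I would close the loop at $t=t^*$. Since $u(t^*,h(t^*))=\bar u(t^*,h(t^*))=0$ and $u\le\bar u$ just to the left of $x=h(t^*)=\bar h(t^*)$, comparing the $x$-derivatives at the common boundary point gives $u_x(t^*,h(t^*))\le\bar u_x(t^*,h(t^*))$ (both are $\le0$, with $u_x$ the larger in absolute value), hence
\[
h'(t^*)=-\mu u_x(t^*,h(t^*))\ge -\mu\bar u_x(t^*,\bar h(t^*))\ge\bar h'(t^*)\wedge(\text{the given lower bound on }\bar h'),
\]
more precisely $h'(t^*)=-\mu u_x(t^*,h(t^*))\ge-\mu\bar u_x(t^*,\bar h(t^*))$ and by hypothesis $\bar h'(t^*)\ge-\mu\bar u_x(t^*,\bar h(t^*))$ — so this alone gives $h'(t^*)\ge$ that common quantity but not immediately $h'(t^*)\le\bar h'(t^*)$. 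The standard fix is to observe that if the two boundaries were to cross strictly after $t^*$ then the strict Hopf inequality $w_x(t^*,h(t^*))<0$ forces $u_x(t^*,h(t^*))<\bar u_x(t^*,h(t^*))$, whence $h'(t^*)<-\mu\bar u_x\le\bar h'(t^*)$, contradicting that $h-\bar h$ is nondecreasing through $t^*$; and the degenerate case $w\equiv0$ is handled separately (it forces $h\equiv\bar h$ on $[0,t^*]$ and one re-runs the argument, or rules it out using the strict positivity of $u$). This contradiction shows $\{t:h(t)>\bar h(t)\}=\emptyset$, i.e. $h(t)\le\bar h(t)$ on $(0,T]$. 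Finally, with $h\le\bar h$ on all of $[0,T]$ now established, the argument of the second paragraph applies verbatim on the whole interval $(0,T]$ and delivers $u(t,x)\le\bar u(t,x)$ for $t\in(0,T]$, $x\in(0,h(t))$.

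\medskip

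The main obstacle is precisely the coupling handled at $t^*$: one needs the Hopf boundary lemma to upgrade $u\le\bar u$ to a strict derivative inequality at the touching point of the free boundaries, and one must treat the degenerate case $\bar u\equiv u$ (equivalently $\bar h\equiv h$) carefully so that it does not produce a spurious counterexample. Everything else — the interior maximum principle, the mean-value reduction of the nonlinearity to a linear reaction term with bounded coefficient, the sign bookkeeping at $x=0$ using $u_x(t,0)=0$ — is routine. I would also note that the nonnegativity of $\bar u$ on its domain, which is used to get $w(t,h(t))\ge0$, should either be assumed or derived from the supersolution inequalities together with $\bar u(t,\bar h(t))=0$ by a preliminary application of the maximum principle to $\bar u$.
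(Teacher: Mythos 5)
Your strategy --- order the free boundaries by a first-crossing-time argument, applying the interior parabolic maximum principle to $w=\bar u-u$ and then the Hopf lemma at the touching point, and finally re-running the interior comparison to get $u\le\bar u$ --- is precisely the mechanism of Lemma 3.5 in \cite{DuLi}, which is what the paper points to for this Proposition. As written, though, the argument has a genuine gap when $h_0=\bar h(0)$: in that case nothing rules out $t^*=0$, and then the parabolic region $\{(t,x):0<t\le t^*,\ 0<x<h(t)\}$ on which you invoke the maximum principle and the Hopf lemma is empty, so there is no contradiction to derive. The circularity you identified as ``the subtlety'' is real and is not broken by the first-crossing-time setup alone: you need $h\le\bar h$ on a time interval of positive length \emph{before} you can run the maximum-principle step, and near $t=0$ that is exactly what is in question (one has $h'(0)\le\bar h'(0)$ from the initial data, but equality is possible and then second-order behavior decides). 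The standard fix is an approximation step: first prove the strict version --- replace $(u_0,h_0,\mu)$ by slightly shrunken data $(u_0^\epsilon,h_0^\epsilon,(1-\epsilon)\mu)$ so that $h_\epsilon(0)<\bar h(0)$ strictly, which forces $t^*>0$ and also disposes of the degenerate case $w\equiv 0$ because $u_\epsilon<\bar u$ strictly somewhere initially --- conclude $h_\epsilon\le\bar h$, $u_\epsilon\le\bar u$ for each small $\epsilon>0$, and then let $\epsilon\to 0$ using the continuous dependence of the solution on $\mu$ and the initial data noted in the remark after Proposition \ref{global-existence}. Without that step the proof is incomplete.

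A smaller point: the derivative comparison in your ``close the loop'' paragraph has its signs reversed. From $w=\bar u-u\ge 0$ on $0<x<h(t^*)$ with $w(t^*,h(t^*))=0$, the Hopf lemma gives $w_x(t^*,h(t^*))<0$, that is $u_x(t^*,h(t^*))>\bar u_x(t^*,\bar h(t^*))$, not ``$<$'' as you wrote (and the non-strict comparison is $u_x\ge\bar u_x$, not $\le$, since the larger function $\bar u$ has the more negative slope at the common zero). Your displayed conclusion $h'(t^*)=-\mu u_x(t^*,h(t^*))<-\mu\bar u_x(t^*,\bar h(t^*))\le\bar h'(t^*)$ is correct, but it follows from $u_x>\bar u_x$, the opposite of the inequality you stated just before it.
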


\begin{proof}
The proof of this Proposition is similar to that of Lemma 3.5 in
\cite{DuLi} and Lemma 2.6 in \cite{DuGu}.
\end{proof}

\begin{remark}
\label{re1}
The pair $(\bar{u},\bar{h})$ in Proposition \ref{comparison-principle}
is called an upper solution of the free boundary problem. We can
define a lower solution by reversing all the inequalities in the
obvious places.

\end{remark}

\begin{proposition}
\label{comparison-principle1} Let $f(t,u)$ be a function satisfying
(H1) and (H2). Suppose that $T\in (0,\infty)$, $\bar{h}
\in C^{1}([0,T])$, $\bar{u}\in C ^{1,2}(D^{*}_{T})$
with $D^{*}_{T}=\{(t,x)\in\R^{2}:0<t\leq T, -\infty<x<\bar{h}(t)\}$, and
\begin{equation*}
\begin{cases}
\bar{u}_t\geq\bar{u}_{xx}+\bar{u}f(t,\bar{u}),\quad &t>0, -\infty<x<\bar{h}(t)
\cr \bar{h}^{'}(t)\geq-\mu\bar{u}_x(t,\bar{h}(t)),\quad &t>0, \cr
u(t,\bar{h}(t))=0,\quad  &t>0
\end{cases}
\end{equation*}
If $h_{0}\leq \bar{h}(0)$ and $u_{0}(x)\leq\bar{u}(0,x)$ in $(-\infty,h_{0}]$,
then the solution $(u,h)$ of the free boundary problem
\begin{equation*}
\begin{cases}
u_t=u_{xx}+uf(t,u),\quad &t>0, -\infty<x<h(t)\cr h^{'}(t)=-\mu u_x(t,h(t)),
\quad &t>0 \cr u(t,h(t))=0,\quad &t>0 \cr h(0)=h_0, u(0,x)=u_{0}(x),\quad &-\infty<x\leq h_{0}
\end{cases}
\end{equation*}
satisfies
$$h(t)\leq \bar{h}(t)  \ for\ all\ t\in(0,T],\ u(t,x)\leq\bar{u}(x,t) \ for \
t\in(0,T]\ and\ x\in(-\infty,h(t)).$$

\end{proposition}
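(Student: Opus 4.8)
The plan is to reduce this statement to the bounded-domain comparison principle of Proposition \ref{comparison-principle} by an exhaustion argument: we approximate the unbounded spatial domain $(-\infty, h(t)]$ by an increasing sequence of bounded intervals $[-n, h(t)]$ and apply (a variant of) the already-proved comparison result on each, then pass to the limit. First I would fix $n\in\NN$ and consider, on $0<t\le T$, the free boundary problem obtained from the given one by adding the left boundary condition $u(t,-n)=0$ and the initial condition on $[-n,h_0]$; call its solution $(u^n, h^n)$. Since $\bar u$ is a supersolution of the equation on the larger strip $-\infty<x<\bar h(t)$, in particular it is a supersolution on $-n<x<\bar h(t)$, and at the artificial left boundary we need $\bar u(t,-n)\ge 0 = u^n(t,-n)$, which holds because $\bar u$ is positive (or at least nonnegative) there — this is where the hypothesis $\liminf_{x\to\infty}$ / positivity of the supersolution enters, and one should note that in the intended applications $\bar u>0$. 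Applying the bounded-domain comparison principle (the obvious analogue of Proposition \ref{comparison-principle} with a Dirichlet rather than Neumann condition on the fixed left boundary — its proof is identical, being a maximum-principle argument for the parabolic operator together with the Hopf-lemma estimate at the free boundary that controls $h^{n\prime}(t)$ versus $\bar h'(t)$) gives $h^n(t)\le \bar h(t)$ and $u^n(t,x)\le \bar u(t,x)$ on the common domain, for all $n$.

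Next I would let $n\to\infty$. The sequence $(u^n, h^n)$ is monotone increasing in $n$: enlarging the domain from $[-n,h_0]$ to $[-(n{+}1),h_0]$ only adds mass, so by the bounded-domain comparison principle applied between consecutive $n$'s we get $u^{n}\le u^{n+1}$ and $h^{n}\le h^{n+1}$ on overlapping domains. Each $h^n$ is bounded above by $\bar h$ on $[0,T]$, hence $h^n(t)\uparrow h^\infty(t)\le\bar h(t)$, and the derivatives $h^{n\prime}$ are uniformly bounded via the free-boundary condition and parabolic boundary estimates, so the convergence is in $C^1([0,T])$ after extracting a subsequence. On any fixed compact subset of the limiting parabolic domain $\{0<t\le T,\ x<h^\infty(t)\}$, interior parabolic Schauder estimates give uniform $C^{1+\alpha/2,\,2+\alpha}$ bounds on $u^n$, so $u^n\to u^\infty$ in $C^{1,2}_{loc}$, and $(u^\infty, h^\infty)$ solves the half-line free boundary problem with the prescribed initial data (one checks the initial trace is recovered by the same monotone-limit argument near $t=0$). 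By uniqueness of solutions to this problem, $(u^\infty,h^\infty)=(u,h)$. Passing to the limit in $h^n(t)\le\bar h(t)$ and $u^n(t,x)\le\bar u(t,x)$ then yields the claimed inequalities $h(t)\le\bar h(t)$ and $u(t,x)\le\bar u(t,x)$ for $t\in(0,T]$, $x\in(-\infty,h(t))$.

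The main obstacle I anticipate is the behavior at spatial infinity, specifically making rigorous that the exhausting solutions $u^n$ converge to the ``right'' solution $u$ of the unbounded problem rather than to something smaller. This requires (i) knowing that the unbounded half-line free boundary problem is well-posed with a unique solution for the given class of initial data $u_0$ on $(-\infty,h_0]$ — which in the intended applications will be of the form $u_0 = V^*(0)$ or a bounded perturbation thereof, so $u_0$ is bounded and the standard theory (e.g.\ as in \cite{DuGu,DuLi}) applies — and (ii) an a priori bound on $u^n$ uniform in $n$, which one obtains by comparison with the spatially homogeneous supersolution $\overline V(t)$ solving $\overline V' = \overline V f(t,\overline V)$ with large initial datum, using (H1). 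Once these are in place, the monotone-limit and uniqueness steps close the argument. A secondary (routine) point is that the cited bounded-domain comparison principle is stated with a Neumann condition at $x=0$, whereas here the fixed boundary carries a Dirichlet condition $u(t,-n)=0$; the proof of Proposition \ref{comparison-principle} (following \cite[Lemma 3.5]{DuLi}, \cite[Lemma 2.6]{DuGu}) adapts verbatim, since the sign condition $\bar u(t,-n)\ge u^n(t,-n)$ replaces the Neumann sign condition $\bar u_x(t,0)\le 0$ in the maximum-principle comparison on the fixed part of the boundary.
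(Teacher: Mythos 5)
The paper's proof is deferred to the direct maximum-principle argument of \cite[Lemma~3.5]{DuLi} and \cite[Lemma~2.6]{DuGu}: one perturbs the solution (replacing $\mu$ by $(1-\epsilon)\mu$ and slightly shrinking $h_0$ and $u_0$), shows by a first-touch-time contradiction — using Hopf's lemma at the moving boundary and the parabolic comparison principle on the strip up to that time — that the perturbed free boundary stays strictly below $\bar h$ on $[0,T]$, and then sends $\epsilon\to 0$ by continuous dependence. That argument transfers to the half-line $(-\infty, h(t)]$ essentially verbatim, provided $u$ and $\bar u$ are bounded so that the maximum principle and Hopf lemma are available on the unbounded spatial domain. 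Your exhaustion-by-truncation approach is a genuinely different route: it trades the first-touch-time contradiction for well-posedness of a family of truncated problems plus a monotone passage to the limit.

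As written, though, your version has two concrete gaps. First, a compatibility problem at the artificial boundary: you impose $u^n(t,-n)=0$ while keeping the initial datum $u_0$ on $[-n,h_0]$, but $u_0(-n)>0$ in general, so the initial and boundary data are incompatible and the truncated problem is ill-posed in the classical sense. You would need either a nested family of cutoffs $u_0^n\uparrow u_0$ vanishing at $x=-n$, or (cleaner, and what Lemma \ref{com-principle free boun 2} is set up for) a time-dependent Dirichlet datum $\ell_n(t)$ at $x=-n$ with $\ell_n(0)=u_0(-n)$ and $\ell_n(t)\le\bar u(t,-n)$; either choice can be made so that monotonicity in $n$ is preserved, but it must be arranged explicitly. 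Second, to conclude $u\le\bar u$ from $u^\infty\le\bar u$ you need $u^\infty=u$, which requires uniqueness for the half-line free boundary problem. You flag this, but note the potential circularity: uniqueness for that problem is itself standardly proved by precisely the Du--Lin-type direct comparison argument that the paper intends here, so invoking it inside your exhaustion proof either presupposes the result or requires you to cite the half-line well-posedness theory (uniqueness included) as an independent input, in which case the direct argument is already available and the exhaustion buys you nothing.
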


\begin{proof}
The proof of this Proposition is similar to Proposition \ref{comparison-principle}.
\end{proof}

\begin{proposition}
\label{global-existence} For any given $h_0>0$ and $u_0$ satisfying \eqref{initial-value},
$(u(t,x;u_0,h_0), h(t;u_0,h_0))$ exists for all $t\ge 0$.
\end{proposition}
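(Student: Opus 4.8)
The plan is to establish global existence by a standard local-existence plus a priori bounds argument, exploiting the monostable structure of $f$ in (H1). First I would invoke the local existence theory from \cite{DuGuPe} (already cited after \eqref{main-eq}) to get a unique solution $(u(t,x;u_0,h_0),h(t;u_0,h_0))$ on a maximal time interval $[0,T_{\max})$, where either $T_{\max}=\infty$ or some norm of the solution blows up as $t\uparrow T_{\max}$. The goal is then to rule out blow-up in finite time by producing a priori bounds on $u$, on $h$, and on $h'$ that are uniform on bounded time intervals.

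The key bound is the $L^\infty$ bound on $u$. By (H1) there is $M>0$ with $\sup_{t\in\RR,\,u\ge M}f(t,u)<0$, so the constant $\bar M:=\max\{M,\norminfty{u_0}\}$ satisfies $\bar M_t=0\ge \bar M_{xx}+\bar M f(t,\bar M)$, i.e. the pair $(\bar M,\bar h)$ with any nonincreasing... — more carefully, I would take a spatially constant supersolution $\bar u(t,x)\equiv \bar M$ together with $\bar h(t)$ chosen so that $\bar h(0)\ge h_0$ and $\bar h'(t)\ge -\mu\bar u_x(t,\bar h(t))=0$, e.g. $\bar h(t)=h_0$ works only if we allow $\bar h$ constant; since $\bar u_x\equiv 0$ the free-boundary inequality $\bar h'(t)\ge 0$ is satisfied by $\bar h\equiv h_0$, and $\bar u_x(t,0)=0\le 0$, $\bar u(t,\bar h(t))=\bar M\ge 0$ — but we need $u(t,\bar h(t))=0$, so instead I would apply Proposition \ref{comparison-principle} with $\bar h$ chosen large enough (e.g. growing linearly) so that $h(t)\le\bar h(t)$ while still $\bar u\equiv\bar M$; the hypotheses of Proposition \ref{comparison-principle} only require $\bar u(t,\bar h(t))=0$ is NOT among them — rechecking, the stated hypothesis is $u(t,\bar h(t))=0$ which refers to the actual solution, not $\bar u$. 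Thus $(\bar M,\bar h)$ with $\bar h(t)=h_0+\mu\bar M t/\,(\text{something})$, or simply any $C^1$ function with $\bar h(0)\ge h_0$ and $\bar h'\ge 0$, is a genuine upper solution, giving $0\le u(t,x;u_0,h_0)\le \bar M$ for all $t<T_{\max}$. This uniform $L^\infty$ bound, together with parabolic $L^p$ and Schauder estimates on the (shrinking via the transformation $y=x/h(t)$) fixed domain, yields bounds on $u_x$, in particular on $u_x(t,h(t))$, hence on $h'(t)=-\mu u_x(t,h(t))$, on bounded time intervals; monotonicity of $h$ (already noted: $h$ is increasing) plus this $h'$ bound controls $h(t)$ itself.

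With $u$ bounded in $L^\infty$ and $h,h'$ bounded on $[0,T_{\max})$ whenever $T_{\max}<\infty$, the solution extends past $T_{\max}$, contradicting maximality; hence $T_{\max}=\infty$. I would present this as: fix any $T>0$, show the above bounds hold on $[0,T)$ with constants depending only on $T$, $h_0$, $\norminfty{u_0}$, $\mu$, and the structural constants in (H1), and conclude the solution exists on $[0,T]$; since $T$ is arbitrary, it exists for all $t\ge 0$.

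The main obstacle is purely technical: transferring the a priori $L^\infty$ bound into a gradient bound at the moving boundary, since the domain $0<x<h(t)$ changes with $t$. The standard device is the change of variables $y=x/h(t)$, which turns \eqref{main-eq} into a uniformly parabolic equation on the fixed interval $(0,1)$ with coefficients depending on $h(t)$ and $h'(t)$; one then runs a bootstrap, first getting $h'\in L^\infty_{loc}$ from an $H^1$ or $W^{1,p}$ estimate, then upgrading via Schauder to $C^{1+\alpha/2,2+\alpha}$ bounds that close the loop. Since this is entirely parallel to the arguments in \cite{DuLi, DuGu, DuGuPe}, I would simply note that the a priori bounds of Proposition \ref{comparison-principle} (and the cited references) give uniform control on $[0,T]$ for every $T>0$, and therefore the maximal existence time is infinite.
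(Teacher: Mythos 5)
Your outline — local existence on a maximal interval from the cited literature, an a priori $L^\infty$ bound on $u$, a bound on $h'$, and continuation past any finite $T_{\max}$ — is exactly the scheme of the cited Du--Guo Theorem~4.3, to which the paper's own proof simply defers; the structure is right. Two points, though. The condition $u(t,\bar{h}(t))=0$ in Proposition~\ref{comparison-principle} is a typo for $\bar{u}(t,\bar{h}(t))=0$ (compare the correctly stated Lemma~\ref{com-principle free boun 2}), so the constant $\bar u\equiv\bar M$ is \emph{not} an admissible upper solution there, and the reading you fall back on (a condition on the actual $u$) would make the proposition circular, since it would presuppose $\bar h\ge h$. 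The clean route for the $L^\infty$ bound is to bypass the free-boundary comparison principle entirely and compare $u$ directly, via the ordinary parabolic maximum principle on $\{0<t<T,\ 0<x<h(t)\}$, with the ODE solution $\bar v'=\bar v f(t,\bar v)$, $\bar v(0)=\max\{M,\norminfty{u_0}\}$: the difference $\bar v-u$ satisfies a linear parabolic inequality, is nonnegative at $t=0$, has zero Neumann datum at $x=0$, and equals $\bar v(t)>0$ at $x=h(t)$, so is nonnegative throughout. Second, the bootstrap you sketch for the $h'$ bound via $y=x/h(t)$ is circular as stated, because the rescaled coefficients already involve $h'$; the argument in Du--Lin/Du--Guo instead bounds $-u_x(t,h(t))$ by a local barrier of the form $w=M_1\bigl[2M_3(h(t)-x)-M_3^2(h(t)-x)^2\bigr]$ on $h(t)-M_3^{-1}\le x\le h(t)$, which uses only the $L^\infty$ bound and yields $0<h'(t)\le 2\mu M_1 M_3$ uniformly — this is the estimate that closes the continuation argument.
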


\begin{proof}
The proof is similar to that of Theorem 4.3 in \cite{DuGu}.
\end{proof}

\begin{remark}
From the uniqueness of the solution to \eqref{main-eq} and
some standard compactness argument, we can obtain that the
unique solution $(u,h)$ depends continuously on $u_{0}$ and
the parameters appearing in \eqref{main-eq}.
\end{remark}

We will need some simple variants of Proposition \ref{comparison-principle}
and Remark \ref{re1}, whose proofs are similar to the original ones
and therefore omitted.

\begin{lemma}
\label{com-principle free boun 2}
Let $f(t,u)$ be a function satisfying (H1) and (H2).
Suppose that $T\in(0,\infty)$, $\bar{h}\in C^1([0,T])$,
$\bar{u}\in C^{1,2}(D_T^*)$ with
$D_T^*=\{(t,x)\in\RR^2: 0\leq t \leq T, \ 0\leq x \leq \bar{h}(t)\}$,
and
\begin{equation*}
\begin{cases}
\bar{u}_t\geq \bar{u}_{xx}+\bar uf(t,\bar{u}),
\quad &t\in(0,T], 0<x<\bar{h}(t), \cr
\bar{u}(t,\bar{h}(t))=0,  {\bar{h}}^{'}(t)\geq -\mu{\bar{u}}_x(t,\bar{h}(t)),
\quad &t\in(0,T], \cr
\bar{u}(t,0)\geq l(t), \quad &t\in(0,T].
\end{cases}
\end{equation*}
If $h\in C^1([0,T])$ and $u\in C^{1,2}(D_T)$ with
$D_T=\{(t,x)\in\RR^2: 0\leq t \leq T, \ 0\leq x \leq h(t)\}$
satisfy
$$0<h(0)\leq\bar{h}(0), 0<u(0,x)\leq\bar{u}(0,x) \ for \ 0\leq x \leq h(0), $$
and
\begin{equation}
\begin{cases}
\label{free-boun-nonhom}
u_t=u_{xx}+uf(t,u),
\quad &t\in(0,T], 0<x<h(t), \cr
u(t,h(t))=0, \ h^{'}(t)=-\mu u_x(t,h(t)),
\quad &t\in(0,T], \cr
u(t,0)=l(t), \quad &t\in(0,T].
\end{cases}
\end{equation}
then
$$h(t)\leq \bar{h}(t) \ for \ t\in(0,T], \ u(t,x)\leq\bar{u}(t,x)
\ for \ (t,x)\in(0,T]\times(0,h(t)).$$
\end{lemma}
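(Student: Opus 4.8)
The plan is to reduce this statement to the already-established comparison principle, Proposition 1.1, via a translation/shifting argument that converts the inhomogeneous Dirichlet datum $u(t,0)=l(t)$ into the homogeneous condition $u_x(t,0)\le 0$ required there, or — more robustly — to re-run the standard parabolic maximum-principle argument directly on the difference $w=\bar u-u$. I expect the second route to be cleaner, so I would organize the proof around it. First I would fix $T$ and define, on the overlapping region, the function $w(t,x)=\bar u(t,x)-u(t,x)$ for $0\le x\le\min\{h(t),\bar h(t)\}$, together with the boundary-ordering function $\rho(t)=\bar h(t)-h(t)$. The hypotheses give $\rho(0)\ge 0$ and $w(0,x)\ge 0$ on $[0,h(0)]$, and on the lateral boundary $x=0$ we have $w(t,0)=\bar u(t,0)-l(t)\ge l(t)-l(t)=0$ by the assumed lower bound $\bar u(t,0)\ge l(t)$.

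The key steps, in order, are: (1) Suppose for contradiction that the conclusion fails, i.e. the first time $t_1\in(0,T]$ at which either $\rho(t_1)=0$ or $w(t_1,x_1)=0$ for some interior $x_1$ occurs. Standard argument (as in \cite{DuGu,DuLi}): on $(0,t_1)$ we have $h(t)<\bar h(t)$ and, writing the PDE for $\bar u$ as a differential inequality and subtracting the equation for $u$, $w$ satisfies a linear parabolic inequality $w_t-w_{xx}-c(t,x)w\ge 0$ with $c(t,x)=\big(\bar u f(t,\bar u)-u f(t,u)\big)/(\bar u - u)$ bounded on the (compact, bounded) region by (H1); here one must take care where $\bar u=u$, but the quotient extends continuously using $f\in C^1$. (2) Apply the strong maximum principle and the Hopf lemma to $w$ on the curved domain $\{0<x<h(t)\}$ (since on $0<t<t_1$ the graph of $h$ lies strictly below that of $\bar h$, so $u$ and $\bar u$ are both defined on a full neighborhood to the right of $x=h(t)$): if $w$ attains an interior zero, it vanishes identically, contradicting $w(0,\cdot)>0$; if the first contact is at $x=h(t_1)$ where $w(t_1,h(t_1))=\bar u(t_1,h(t_1))>0$ unless $\rho(t_1)=0$. (3) In the remaining case $\rho(t_1)=0$: at $t=t_1$, $h(t_1)=\bar h(t_1)$, $w(t_1,x)\ge0$ on $[0,h(t_1)]$ with $w(t_1,h(t_1))=0$, so Hopf gives $w_x(t_1,h(t_1))<0$, i.e. $\bar u_x(t_1,\bar h(t_1))<u_x(t_1,h(t_1))$; combined with $\bar h'(t_1)\ge-\mu\bar u_x(t_1,\bar h(t_1))$ and $h'(t_1)=-\mu u_x(t_1,h(t_1))$ this yields $\rho'(t_1)>0$, contradicting that $t_1$ is the first time $\rho$ hits $0$ from the positive side.

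The main obstacle is purely technical bookkeeping at the free boundary: one must justify that on $0<t<t_1$ the solution $u$ and the supersolution $\bar u$ are simultaneously defined and smooth on a common region that includes a right-neighborhood of the curve $x=h(t)$, so that the Hopf lemma applies at contact points on that curve — this is exactly the device used in \cite{DuGu,DuLi}, namely that $\bar h>h$ strictly before the first contact. A secondary point is the continuous extension of the coefficient $c(t,x)$ through the set $\{\bar u=u\}$; this follows from writing $c=\int_0^1 \partial_u\big(uf(t,u)\big)\big|_{u=\theta\bar u+(1-\theta)u}\,d\theta$, which is bounded by (H1) on bounded sets. Since the present lemma differs from Proposition 1.1 only in replacing $\bar u_x(t,0)\le 0$, $u_x(t,0)=0$ by $\bar u(t,0)\ge l(t)$, $u(t,0)=l(t)$ — i.e. the left boundary is now Dirichlet with matching ordered data rather than Neumann — the maximum-principle input at $x=0$ is if anything simpler (ordering is imposed directly), and the rest of the argument is verbatim that of Proposition 1.1; hence I would simply indicate these modifications and refer to the proofs of Lemma 3.5 in \cite{DuLi} and Lemma 2.6 in \cite{DuGu}.
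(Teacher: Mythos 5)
Your proposal is correct and follows the paper's intended approach: the paper omits the proof of this lemma, noting only that it is a simple variant of Proposition \ref{comparison-principle}, whose proof is in turn referred to Lemma 3.5 of \cite{DuLi} and Lemma 2.6 of \cite{DuGu}. You have reconstructed that standard argument faithfully — a contradiction at the first free-boundary contact time via the Hopf lemma, with the correct observations that the inhomogeneous Dirichlet datum at $x=0$ makes the left-boundary comparison for $w=\bar u-u$ immediate (even simpler than the Neumann case of Proposition \ref{comparison-principle}), and that the usual approximation device from \cite{DuGu,DuLi} is what guarantees strict ordering $h<\bar h$ before first contact so that $\bar u$ is defined on a right-neighborhood of $x=h(t)$.
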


Similarly, we have the following analogue of Lemma
\ref{com-principle free boun 2}.

\begin{lemma}
\label{com-principle free boun 3}
Let $f(t,u)$ be as in Lemma
\ref{com-principle free boun 2}.
Suppose that $T\in(0,\infty)$, $\underline{h}\in C^1([0,T])$,
$\underline{u}\in C^{1,2}(D^+_T)$ with
$D_T^+=\{(t,x)\in\RR^2: 0\leq t \leq T, \ 0\leq x \leq \underline{h}(t)\}$,
and
\begin{equation*}
\begin{cases}
\underline{u}_t\leq \underline{u}_{xx}+\underline u f(t,\underline{u}),
\quad &t\in(0,T], 0<x<\underline{h}(t), \cr
\underline{u}(t,\underline{h}(t))=0, \ \underline{h}^{'}(t)\leq -\mu\underline{u}_x(t,\underline{h}(t)),
\quad &t\in(0,T], \cr
\underline{u}(t,0)\leq l(t), \quad &t\in(0,T].
\end{cases}
\end{equation*}
If $h\in C^1([0,T])$ and $u\in C^{1,2}(D^+_T)$ satisfy
\eqref{free-boun-nonhom} and
$$h(0)\geq\underline{h}(0), u(0,x)\geq\underline{u}(0,x)\geq 0,  \ for \ 0\leq x \leq\underline{h}(0),$$
then
$$h(t)\geq \underline{h}(t) \ for \ t\in(0,T], \ u(t,x)\geq\underline{u}(t,x)
\ for \ (t,x)\in(0,T]\times(0,\underline{h}(t)).$$
\end{lemma}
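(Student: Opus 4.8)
The plan is to adapt, for \emph{lower} solutions, the free-boundary comparison argument behind Proposition \ref{comparison-principle} and Lemma \ref{com-principle free boun 2} (which themselves follow \cite[Lemma 3.5]{DuLi} and \cite[Lemma 2.6]{DuGu}): one proves the two inequalities $h\ge\underline h$ and $u\ge\underline u$ together, by a ``first contact time'' argument. As a preliminary, note that $u\ge 0$ throughout $\{0\le x\le h(t)\}$, by the classical minimum principle applied to $u_t-u_{xx}-f(t,u)\,u=0$ with nonnegative initial and boundary data ($u(t,0)=l(t)\ge 0$, $u(t,h(t))=0$); the coefficient $f(t,u)$ evaluated along the (bounded) solution is bounded by (H1).

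\textbf{Step 1: from $h\ge\underline h$ to $u\ge\underline u$.} Assume $h(t)\ge\underline h(t)$ on $[0,t_1]$ for some $t_1\in(0,T]$. On $Q_{t_1}:=\{(t,x):0<t\le t_1,\ 0<x<\underline h(t)\}$ both functions are defined, and $w:=u-\underline u$ satisfies $w_t-w_{xx}-c(t,x)\,w\ge 0$ in $Q_{t_1}$, where $c(t,x)=\big(uf(t,u)-\underline u f(t,\underline u)\big)/(u-\underline u)$, extended continuously on $\{u=\underline u\}$, is bounded by (H1). On the parabolic boundary: $w(0,\cdot)\ge 0$ by hypothesis; $w(t,0)=l(t)-\underline u(t,0)\ge 0$; and $w(t,\underline h(t))=u(t,\underline h(t))\ge 0$ because $\underline h(t)\le h(t)$ and $u\ge 0$. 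The parabolic maximum principle on the (curvilinear, $C^1$-bounded) domain $Q_{t_1}$ — if one likes, after the change of variables $y=x/\underline h(t)$ that fixes the lateral boundary — yields $w\ge 0$, i.e. $u\ge\underline u$ on $Q_{t_1}$.

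\textbf{Step 2: $h\ge\underline h$ on $(0,T]$.} Suppose not, and set $t^*=\inf\{t\in(0,T]:h(t)<\underline h(t)\}$. Since $h(0)\ge\underline h(0)$ and $h,\underline h\in C^1$, we get $t^*\in(0,T]$, $h\ge\underline h$ on $[0,t^*]$, $h(t^*)=\underline h(t^*)$, and $h'(t^*)\le\underline h'(t^*)$. Applying Step 1 with $t_1=t^*$ gives $w=u-\underline u\ge 0$ on $Q_{t^*}$, while $w(t^*,\underline h(t^*))=u(t^*,h(t^*))-\underline u(t^*,\underline h(t^*))=0$; thus $w$ attains its minimum, $0$, at the lateral boundary point $(t^*,\underline h(t^*))$. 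If $w>0$ in the interior near that point, Hopf's boundary-point lemma (the $C^1$ curve $x=\underline h(t)$ meets the interior-ball condition there) gives $\partial_x w(t^*,\underline h(t^*))<0$, i.e. $u_x(t^*,h(t^*))<\underline u_x(t^*,\underline h(t^*))$, so
$$h'(t^*)=-\mu\,u_x(t^*,h(t^*))\;>\;-\mu\,\underline u_x(t^*,\underline h(t^*))\;\ge\;\underline h'(t^*),$$
contradicting $h'(t^*)\le\underline h'(t^*)$. The degenerate alternative, $w\equiv 0$ near $(t^*,\underline h(t^*))$, is ruled out as in \cite{DuLi,DuGu}: by the strong maximum principle it would force $u\equiv\underline u$ on a time-slab of $Q_{t^*}$, but $u>0$ in $\{0<x<h(t)\}$ while $\underline u(t,\underline h(t))=0$, which is impossible unless $\underline h\equiv h$ there — contradicting the definition of $t^*$.

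\textbf{Conclusion; the main obstacle.} Step 2 gives $h\ge\underline h$ on $(0,T]$, and then Step 1 with $t_1=T$ gives $u\ge\underline u$ on $(0,T]\times(0,\underline h(t))$, which is the assertion. The supporting computations — boundedness of $c$, the boundary-straightening change of variables, and the regularity needed to invoke the interior strong maximum principle and Hopf's lemma up to the $C^1$ lateral boundary — are routine and run exactly as in \cite{DuLi,DuGu}; the single genuinely delicate point, which is precisely why the lemma is stated as an ``analogue'' with proof deferred to those sources, is the Hopf-lemma step at the first contact time $t^*$ together with the exclusion of the degenerate equality case.
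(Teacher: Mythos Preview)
Your sketch follows exactly the first-contact-time strategy of \cite{DuLi,DuGu}, which is precisely what the paper defers to (the paper gives no proof of this lemma, only the remark that it is the obvious analogue of the upper-solution version). So the approach matches.

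One point should be tightened. In Step~2 you assert $t^*\in(0,T]$, but the hypothesis allows $h(0)=\underline h(0)$, and then the first-contact time can be $t^*=0$, where Hopf's lemma is unavailable (there is no preceding time interval for the strong maximum principle to act). At $t^*=0$ your argument collapses to $h'(0)\ge\underline h'(0)$ from the initial ordering together with $h'(0)\le\underline h'(0)$ from the contact, i.e.\ only equality, which is no contradiction. The standard remedy in \cite{DuLi} is to first treat the strict case $h(0)>\underline h(0)$, where your Steps~1--2 go through verbatim, and then obtain the non-strict case by an $\epsilon$-perturbation (of the initial data or of the parameter $\mu$) followed by continuous dependence in the limit $\epsilon\to 0$. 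You do defer details to those references, which is reasonable given that the paper does too; but the bare claim ``since $h(0)\ge\underline h(0)$ and $h,\underline h\in C^1$, we get $t^*\in(0,T]$'' is not justified by continuity alone and should either be qualified or replaced by the approximation step.
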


\section{Basic Properties of Diffusive KPP Equations in Unbounded Domains}

In this section, we present some basic properties of  \eqref{aux-main-eq2} and \eqref{aux-main-eq3}.
Throughout this subsection, we assume (H1) and (H2).
Let
$$
H(f)={\rm cl}\{f(\cdot+\tau,\cdot)\,|\, \tau\in\RR\},
$$
where the closure is taken in the open compact topology. Observe that for any $g\in H(f)$, $g$ also satisfies (H1) and (H2).

Consider \eqref{aux-main-eq3} and
\begin{equation}
\label{aux-main-eq3-hull}
\begin{cases}
u_t=u_{xx}-\mu u_x(t,0) u_x(t,x)+ u g(t,u),\quad 0<x<\infty\cr
u(t,0)=0
\end{cases}
\end{equation}
for any $g\in H(f)$.

By general semigroup theory, for any $u_0\in \hat X$, \eqref{aux-main-eq3-hull} has a unique solution
$u(t,\cdot;u_0,g)$ with $u(0,\cdot;u_0,g)=u_0$. By (H1) and comparison principle for parabolic equations,
we have that for any $u_0\in\hat X^+$, $u(t,\cdot;u_0,g)$ exists and $u(t,\cdot;u_0,g)\in\hat X^+$ for all $t>0$. Moreover,
there is a constant $M(u_0)>0$ such that $u(t,\cdot;u_0,g)\le M(u_0)$
and $|u_x(t, x;u_0,g)|\le M(u_0)$ for $t\ge 0$ and $g\in H(f)$.

Consider \eqref{aux-main-eq2} and
\begin{equation}
\label{aux-main-eq2-hull}
\begin{cases}
u_t=u_{xx}+ug(t,u),\quad -\infty<x<h(t)\cr
u(t,h(t))=0\cr
h^{'}(t)=-\mu u_x(t,h(t))
\end{cases}
\end{equation}
for any $g\in H(f)$.

Note that a solution $u(t,x)$ of \eqref{aux-main-eq3-hull} gives rise to a solution
$(\tilde u(t,x),\tilde h(t))$ of \eqref{aux-main-eq2-hull}, where $\tilde u(t,x)=u(t,\tilde h(t)-x)$ and $\tilde h(t)=\mu \int_0 ^t u_x(s,0)ds$.
Conversely, a solution $(u(t,x),h(t))$ of \eqref{aux-main-eq2-hull} gives rise to a solution $\tilde u(t,x)$ of
\eqref{aux-main-eq3-hull}, where $\tilde u(t,x)=u(t,h(t)-x)$.
Note also that for given $h_0\in \RR$ and $u_0(\cdot)$ satisfying
\begin{equation}
\label{initial-value-1}
u_0(h_0)=0,\,\, u_0(h_0-\cdot)\in\hat X^+,
\end{equation}
\eqref{aux-main-eq2-hull} has a unique solution $(u(t,x;u_0,h_0,g),h(t;u_0,h_0,g))$  with
$(u(0,x;u_0,h_0,g),h(0;u_0,h_0,g))=(u_0(x),h_0)$.

\subsection{Basic properties of diffusive KPP equations in unbounded domains with a free boundary}

In this subsection, we present some basic properties of solutions of
\eqref{aux-main-eq2} and \eqref{aux-main-eq2-hull}.

For given $g\in H(f)$, given $h_{10},h_{20}\in\RR$ and $u_{10}$ and $u_{20}$ satisfying \eqref{initial-value-1}
with $h_0$ being replaced by $h_{10}$ and $h_{20}$, respectively,
assume that $h(t;u_{10},h_{10},g)\le h(t;u_{20},h_{20},g)$ for
$0\le t\le T$. Then $w(t,x):=u(t,x;u_{20},h_{20},g)-u(t,x;u_{10},h_{10},g)$ satisfies
\begin{equation}
\label{aux-main-eq2-1}
w_t=w_{xx}+a(t,x)w,\quad -\infty<x<\eta(t),\,\, 0<t\le T,
\end{equation}
where $\eta(t)=h(t;u_{10},h_{10},g)$ and $a(t,x)=0$ if $u(t,x;u_{20},h_{20},g)=u(t,x;u_{10},h_{10},g)$ and
$$
a(t,x)=\frac{u(t,x;u_{20},h_{20},g)g(t,u(t,x;u_{20},h_{20},g))-u(t,x;u_{10},h_{10},g)g(t,u(t,x;u_{10},h_{10},g))}{u(t,x;u_{20},h_{20},g)-u(t,x;u_{10},h_{10},g)}
$$
if $u(t,x;u_{20},h_{20},g)\not =u(t,x;u_{10},h_{10},g)$.

 \begin{lemma}
 \label{zero-number-lm1}
 Let $\eta(t)$ be a continuous function for $t\in(t_1, t_2)$. If $w(t,x)$ is a continuous function for
 $t\in(t_1, t_2)$ and $x\in(-\infty, \eta(t))$, and satisfies
 \begin{equation*}
 w_t=w_{xx}+a(t,x)w, \quad  x\in(-\infty, \eta(t)),\,\, t\in(t_1, t_2)
 \end{equation*}
 for some bounded continuous function $a(t,x)$ and $w(t,\eta(t))\not =0$, $w(t,x)\not =0$ for
 $x\ll -1$, then for each $t\in(t_1, t_2)$, the number of zero (denoted by $Z(t)$)
 of $w(t, \cdot)$ in $(-\infty, \eta(t)]$ is finite. Moreover $Z(t)$ is nonincreasing in $t$, and if for some
 $s\in(t_1, t_2)$ the function $w(s, \cdot)$ has a degenerate zero $x_0\in(-\infty, \eta(s))$, then
 $Z(s_1)>Z(s_2)$ for all $s_1, s_2$ satisfying $t_1<s_1<s<s_2<t_2$.
 \end{lemma}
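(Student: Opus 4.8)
The statement is a classical Sturm-type zero-number (lap-number) result for linear parabolic equations, here adapted to a moving right boundary $x=\eta(t)$ and an unbounded left end. The plan is to reduce it to the known zero-number theorems of Angenent and of Matano (as in \cite{Angenent}, or the formulations used by Du--Lin) by a change of variables that flattens the boundary and truncates the spatial domain to a bounded interval. First I would exploit the hypotheses $w(t,\eta(t))\neq 0$ and $w(t,x)\neq 0$ for $x\ll -1$: by continuity, for any compact time subinterval $[s_1,s_2]\subset(t_1,t_2)$ there is a constant $N>0$ so that $w(t,x)$ does not vanish on $[s_1,s_2]\times(-\infty,-N]$ and $w(t,\eta(t))$ stays bounded away from $0$; hence every zero of $w(t,\cdot)$ in $(-\infty,\eta(t)]$ in fact lies in the compact set $[-N,\eta(t)-\delta]$ for some $\delta>0$. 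This already gives finiteness of $Z(t)$ for each $t$, since zeros of a nontrivial solution of a linear parabolic equation are isolated in $x$ (a nontrivial solution cannot vanish on an open $x$-interval by the strong maximum principle / backward uniqueness, or simply because $w(t,\cdot)$ is real-analytic in $x$ when $a$ is smooth; for merely continuous $a$ one uses the Angenent regularity argument).

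Next I would perform the substitution $y = \dfrac{x+N}{\eta(t)+N}$, mapping $x\in(-N,\eta(t))$ onto $y\in(0,1)$, and set $\tilde w(t,y) = w(t,x(t,y))$. A direct computation shows $\tilde w$ satisfies a linear parabolic equation on the fixed domain $(0,1)$,
\begin{equation*}
\tilde w_t = \tilde\alpha(t,y)\tilde w_{yy} + \tilde\beta(t,y)\tilde w_y + a(t,x(t,y))\tilde w,
\end{equation*}
with $\tilde\alpha>0$ bounded and bounded away from zero on $[s_1,s_2]$ (using $\eta\in C$ and, where needed on the left, choosing $N$ so that $\eta(t)+N$ is bounded below) and $\tilde\beta$ bounded (this uses continuity of $\eta$; if $\eta$ is only continuous and not $C^1$ one instead works directly with the weak/viscosity formulation of the zero-number theorem, or notes that in our applications $\eta(t)=h(t;\cdot)$ is $C^1$). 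On the left one has to be slightly careful since $w$ need not vanish at $x=-N$; but $w(t,-N)\neq 0$ with a fixed sign on $[s_1,s_2]$, so the zeros of $\tilde w(t,\cdot)$ in $(0,1)$ coincide with the zeros of $w(t,\cdot)$ in $(-N,\eta(t))$, and possibly one additional zero is never created there. Then the Angenent/Matano zero-number theorem for the fixed interval $(0,1)$ — applicable because $\tilde w$ is nonzero at both endpoints $y=0,1$ — yields that the number of interior zeros is finite, nonincreasing in $t$, and drops strictly at any time $s$ at which $\tilde w(s,\cdot)$ has a multiple (degenerate) zero in $(0,1)$.

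Finally I would transfer the conclusion back: a degenerate zero $x_0\in(-\infty,\eta(s))$ of $w(s,\cdot)$ lies, for a suitable choice of $N$ and $[s_1,s_2]\ni s$, in $(-N,\eta(s))$ and corresponds to a degenerate zero $y_0\in(0,1)$ of $\tilde w(s,\cdot)$ (the change of variables in $x$ is a smooth diffeomorphism for each fixed $t$, so multiplicity of zeros is preserved). The strict drop $Z(s_1)>Z(s_2)$ for $t_1<s_1<s<s_2<t_2$ then follows from the corresponding statement on $(0,1)$, after checking that enlarging the truncation level $N$ (to cover all three times $s_1,s,s_2$ at once) does not change the count of zeros, which holds because all zeros stay in a fixed compact $x$-set by the non-vanishing hypotheses at $x\ll-1$ and at $x=\eta(t)$. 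The main obstacle, and the only place real care is needed, is the left-end bookkeeping: making the truncation $x=-N$ interact correctly with the Dirichlet-type hypotheses of the standard theorem, i.e. ensuring that moving $N$ and flattening $\eta$ neither creates nor destroys zeros and that the resulting coefficients stay in the admissible class; everything else is a direct citation of the established one-dimensional zero-number theory.
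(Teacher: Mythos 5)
Your first step --- using $w(t,\eta(t))\neq 0$ and $w(t,x)\neq 0$ for $x\ll -1$ to trap all zeros in a compact $x$-region --- is exactly the right idea and is also the starting point of the paper's proof. Where you diverge, and where a genuine problem arises, is the boundary-flattening substitution $y=(x+N)/(\eta(t)+N)$. For that change of variables to produce a parabolic equation with bounded first-order coefficient $\tilde\beta$, you need $\eta'$ to exist and be bounded; the lemma only assumes $\eta$ \emph{continuous}. You acknowledge this in a parenthetical (``if $\eta$ is only continuous and not $C^1$ one instead works directly with the weak/viscosity formulation \dots, or notes that in our applications $\eta(t)=h(t;\cdot)$ is $C^1$''), but neither option is carried out, and neither is needed --- the lemma is stated and used with $\eta$ merely continuous, so a proof that requires $\eta\in C^1$ proves a strictly weaker statement.

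The paper avoids the issue entirely, and more simply: rather than flattening the moving boundary, it fixes a time $t_0$ and uses continuity of $w$ and $\eta$ to find $\epsilon,\delta>0$ and $M<0$ so that for $t\in(t_0-\delta,t_0+\delta)$ the function $w(t,\cdot)$ does not vanish at $x=M$ nor anywhere in $[\eta(t_0)-\epsilon,\eta(t)]$. Thus on the short time window all zeros lie in the \emph{fixed} rectangle $[M,\eta(t_0)-\epsilon]$, and one applies Angenent's Theorem D there directly --- no change of variables, no differentiability of $\eta$. A separate comparison argument shows $w$ keeps a fixed sign on $(-\infty,M]$ forward in time from $t_0$, which is used to justify $Z(t)=Z(t;M,\eta(t_0)-\epsilon)$ for $t\in[t_0,t_0+\delta)$ while only $Z(t)\ge Z(t;M,\eta(t_0)-\epsilon)$ holds for $t\in(t_0-\delta,t_0)$; this gives the nonincreasing property and the strict drop at a degenerate zero. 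If you want to salvage your route, you would either have to restrict the lemma to $\eta\in C^1$ (which is acceptable for the application but narrows the statement), or replace the global flattening by the paper's local freezing of the spatial interval.
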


\begin{proof}
For any $t_0\in(t_1, t_2)$, by the continuity of $w$ we can find $\epsilon>0, \delta>0$ and $M<0$ such that
$$w(t,x)\neq 0 \  \ for \ \ t\in I_{t_0}:=(t_0-\delta, t_0+\delta),\,\,  x\in\{M\}
\cup [\eta(t_0)-\epsilon, \eta(t)]$$
 Without loss of generality, we may assume that 
 $$
 w(t_0,x)>0\quad {\rm for}\quad -\infty<x\le M.
 $$
 Then 
 $$
 w(t,M)>0\quad {\rm for}\quad t\in (t_0-\delta,t_0+\delta).
 $$
 By comparison principle for parabolic equations, we have
 $$
 w(t,x)>0\quad {\rm for}\quad t\in (t_0,t_0+\delta),\,\, -\infty<x\le M.
 $$
Let  $Z(t;M,\eta(t_0)-\epsilon)$ be the number of zeros of $u(t,\cdot)$ in the interval
$[M, \eta(t_0)-\epsilon]$. We can apply  Theorem D in \cite{ASB} to see that the conclusions for $Z(t;M,\eta(t_0)-\epsilon)$
hold for $t\in I_{t_0}$ and hence $Z(t)=Z(t;M,\eta(t_0)-\epsilon)$ is finite for $t\in[t_0,t_0+\delta)$. This implies that $Z(t)$ is finite for any $t\in (t_1,t_2)$.
Moreover, 
$$Z(t)\ge Z(t;M,\eta(t_0)-\epsilon)\ge Z(t_0;M,\eta(t_0)-\epsilon)=Z(t_0)\quad {\rm for}\quad t\in(t_0-\delta,t_0),
$$
$$Z(t)= Z(t;M,\eta(t_0)-\epsilon)\le Z(t_0;M,\eta(t_0)-\epsilon)=Z(t_0)\quad {\rm for}\quad t\in (t_0,t_0+\delta),
$$
and if $w(t_0, \cdot)$ has a degenerate zero $x_0\in(-\infty, \eta(t_0))$, then
 $Z(s_1)>Z(s_2)$ for all $s_1, s_2$ satisfying $t_1<s_1<t_0<s_2<t_2$.

\end{proof}

\begin{lemma}
\label{zero-number-lm2}
For given $g\in H(f)$, $h_{10}, h_{20}\in\RR$, and  $u_{10}$, $u_{20}$ satisfying \eqref{initial-value-1} with $h_0$ being replaced
by $h_{10}$ and $h_{20}$, respectively. If $u^{'}_{20}(x_2)<u^{'}_{10}(x_1)$
for any $x_1,x_2$ such that $u_{20}(x_2)=u_{10}(x_1)$, then
$$u(s,x+h(s;u_{20},h_{20},g);u_{20},h_{20},g)\ge u(s,x+h(s;u_{10},h_{10},g);u_{10},h_{10},g)
$$
for $x\le 0$ and $s\ge 0$.
\end{lemma}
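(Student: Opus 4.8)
The plan is to compare the two solutions by keeping track of the free boundaries and applying the zero-number lemma \ref{zero-number-lm1} to the difference $w(t,x)=u(t,x;u_{20},h_{20},g)-u(t,x;u_{10},h_{10},g)$ on the interval $(-\infty,\eta(t))$, where $\eta(t)$ is the smaller of the two free boundaries. The hypothesis that $u_{20}'(x_2)<u_{10}'(x_1)$ whenever $u_{20}(x_2)=u_{10}(x_1)$ is exactly the kind of transversality condition that forces the initial difference of the (shifted) profiles to have at most one zero, and is designed to be propagated by the free-boundary dynamics: at a point where the two profiles agree, the one with the smaller slope is ``steeper downward'', which (after translating to a common boundary) means it lies above the other nearby. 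First I would set up a continuity/approximation argument reducing to the generic case where one may apply Lemma \ref{zero-number-lm1} (i.e., $w(t,\eta(t))\neq 0$ and $w(t,x)\neq 0$ for $x\ll -1$), perturbing $u_{10},u_{20}$ slightly if necessary so the strict inequalities hold and are preserved, then passing to the limit at the end.

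The key steps, in order, are as follows. \textbf{Step 1.} Using the boundary condition $u(t,h(t))=0$ and the free-boundary ODE $h'(t)=-\mu u_x(t,h(t))$, show that the ordering of the two free boundaries cannot reverse: if $h_{10}\le h_{20}$ and at some first time $t_0$ one has $h(t_0;u_{10},\cdot)=h(t_0;u_{20},\cdot)$, then at that common value both $u$'s vanish, the zero-number count of $w$ along the strip up to that time controls the sign of $u_x$ at the common boundary, and one deduces $h'(t_0;u_{10},\cdot)\le h'(t_0;u_{20},\cdot)$, so the ordering $h(t;u_{10},\cdot)\le h(t;u_{20},\cdot)$ persists for all $t\ge 0$. \textbf{Step 2.} With $\eta(t)=h(t;u_{10},h_{10},g)$, the function $w$ solves the linear equation \eqref{aux-main-eq2-1} with a bounded coefficient $a(t,x)$ on $(-\infty,\eta(t))$; at $x=\eta(t)$ one has $w(t,\eta(t))=u(t,\eta(t);u_{20},h_{20},g)-0\ge 0$, and the hypothesis on the initial slopes guarantees $w(0,\cdot)$ changes sign at most once, so by Lemma \ref{zero-number-lm1} the zero number $Z(t)$ of $w(t,\cdot)$ on $(-\infty,\eta(t)]$ is $\le 1$ for all $t>0$, and is nonincreasing. \textbf{Step 3.} Translate back to the moving frame: the quantity to be signed is $W(s,x):=u(s,x+h(s;u_{20},h_{20},g);u_{20},h_{20},g)-u(s,x+h(s;u_{10},h_{10},g);u_{10},h_{10},g)$ for $x\le 0$. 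At $x=0$ both terms vanish, and near $x=0^-$ the sign of $W$ is governed by $\partial_x u$ at the respective boundaries; again invoke the slope comparison (now at time $s$, propagated from $s=0$ via the nonincreasing zero number and the behavior at a degenerate zero) to conclude $u_x(s,h(s;u_{20},\cdot);u_{20},\cdot)\le u_x(s,h(s;u_{10},\cdot);u_{10},\cdot)$, hence $W(s,x)\ge 0$ for $x$ slightly negative. \textbf{Step 4.} Finally rule out a sign change of $W$ on $(-\infty,0)$: a zero of $W$ in the interior would, combined with the zero at $x=0$ and the spatial asymptotics, force $Z$ to exceed $1$ or to drop (via a degenerate zero), contradicting Step 2; conclude $W(s,x)\ge 0$ for all $x\le 0$, $s\ge 0$, and then remove the perturbation by continuity.

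The main obstacle I anticipate is \textbf{Step 1} together with the bookkeeping that links the slope hypothesis to the zero number in the moving frame. The difference equation \eqref{aux-main-eq2-1} lives on the $x$-frame with boundary $\eta(t)$, but the conclusion is stated in the $h(t;\cdot)$-shifted frame, and the two frames are shifted by different amounts for the two solutions; one must carefully check that ``$w$ has at most one zero on $(-\infty,\eta(t)]$'' really does translate into ``$W(s,\cdot)\ge 0$ on $(-\infty,0]$'' rather than merely into a statement about a single sign change whose side is undetermined. Pinning down the correct side is where the transversality hypothesis $u_{20}'(x_2)<u_{10}'(x_1)$ must be used decisively, via the degenerate-zero alternative in Lemma \ref{zero-number-lm1}: if $W$ ever touched zero from below, that touching point would be a degenerate zero of the (shifted) difference, forcing $Z$ to drop strictly, and iterating this would be impossible since $Z\ge 0$ and the initial value of $Z$ is already $1$. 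Handling the noncompact spatial domain (controlling $w$ as $x\to-\infty$, where both solutions converge to $V^*(t)$-type behavior so $w\to 0$, and ensuring $w\not\equiv 0$ near $-\infty$ after perturbation) is a secondary technical point but should follow from the fixed-domain asymptotics recalled in Section 3 together with parabolic estimates.
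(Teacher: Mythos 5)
Your proposal takes a genuinely different route from the paper, and it contains a gap that the paper avoids by a clever choice of frame. In the paper's proof, the target time $s$ is fixed first, and each solution is shifted by the \emph{constant} $h(s;u_{i0},h_{i0},g)$: one sets $\tilde u_i(t,x)=u\bigl(t,x+h(s;u_{i0},h_{i0},g);u_{i0},h_{i0},g\bigr)$. Because the equation is translation-invariant in $x$, each $\tilde u_i$ is again a solution of the same free-boundary problem (with translated initial data), and by construction both shifted free boundaries pass through $x=0$ at time $t=s$. Hence the difference $\tilde u_2-\tilde u_1$ is a solution of a linear parabolic equation on the region to the left of the smaller shifted boundary, so Lemma~\ref{zero-number-lm1} applies \emph{directly} to it. The slope hypothesis pins down the sign structure at $t=0$ (a single crossing point $\xi(0)$, with $\tilde u_2>\tilde u_1$ to its left), and since both boundaries meet at $(s,0)$, the zero number gives $\tilde u_2(s,x)\ge\tilde u_1(s,x)$ for $x<0$, which is exactly the asserted inequality.

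Your Steps 3 and 4 are where the argument breaks. The quantity $W(s,x)=u\bigl(s,x+h(s;u_{20},\cdot);u_{20},\cdot\bigr)-u\bigl(s,x+h(s;u_{10},\cdot);u_{10},\cdot\bigr)$ compares the two solutions at two \emph{different} spatial points shifted by two \emph{different} time-dependent amounts; it is not a trace of a solution of a linear parabolic equation in $(t,x)$, so the zero-number bound you establish for $w(t,\cdot)=u(t,\cdot;u_{20},\cdot)-u(t,\cdot;u_{10},\cdot)$ in the fixed frame does not control the sign of $W$ at all. Your proposed fix, ``propagate the slope comparison via the nonincreasing zero number and the degenerate-zero alternative,'' still lives entirely in the fixed frame, and the passage to the moving frame is precisely the missing step; you identify this obstacle in your last paragraph but do not resolve it. Step~1 is also a red herring: the lemma's hypothesis and conclusion are both translation-invariant, so nothing is, or should be, assumed about the absolute ordering of $h_{10}$ and $h_{20}$ or of the two free boundaries; the per-$s$ constant shift in the paper renders that question moot. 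The lesson is that the correct move is not to compare the original solutions and then change frames at the end, but to change frames \emph{first}, by a fixed translation chosen so that the zero-number lemma yields the conclusion at $t=s$ and $x=0$.
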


\begin{proof}
Fix any $s> 0$.
Let $\tilde u_1(t,x)=u(t,x+h(s;u_{10},h_{10},g);u_{10},h_{10},g)$ and
$\tilde u_2(t,x)=u(t,x+h(s;u_{20},h_{20},g);u_{20},h_{20},g)$.
Then
$$
\tilde u_1(t,x)=u(t,x;u_{10}(\cdot+h(s;u_{10},h_{10},g)), h_{10}-h(s;u_{10},h_{10},g),g)
$$
and
$$
\tilde u_2(t,x)=u(t,x;u_{20}(\cdot+h(s;u_{20},h_{20},g)), h_{20}-h(s;u_{20},h_{20},g),g).
$$
Note that
$$
\tilde u_1(s,0)=\tilde u_2(s,0).
$$
We must have
$$
h_{20}-h(s;u_{20},h_{20},g)< h_{10}-h(s;u_{10},h_{10},g)
$$
and there is a unique $\xi(0)<h_{20}-h(s;u_{20},h_{20},g)$ such that
$$
\tilde u_2(0,x) \begin{cases}
>\tilde u_1(0,x)\,\, {\rm for}\,\, x<\xi(0)\cr
<\tilde u_1(0,x)\,\, {\rm for}\,\, \xi(0)<x<h_{20}-h(s;u_{20},h_{20},g).
\end{cases}
$$
Then by the zero number property (see Lemma \ref{zero-number-lm1}),
$$
\tilde u_2(s,x)> \tilde u_1(s,x),\quad -\infty<x<0.
$$
The lemma then follows.
\end{proof}

Let $H(x)$ be a $C^2((-\infty,0])$ function with $H^{'}(x)\le 0$, $H(0)=0$, $H(x)=1$ for $x\le -1$.
For given $g\in H(f)$, let $u_{0,g}(x)$ and $u_{n,g}(x)$ be defined by
$$
u_{0,g}(x)=\begin{cases} u_g(0),\quad &x<0\cr
0,\quad &x=0
\end{cases}
$$
and
$$
u_{n,g}(x)=H(nx)u_{0,g}(x).
$$
Then
$$
u_{n,g}(x)\ge u_{m,g}(x),\quad \forall n\ge m\,\, x\le 0
$$
and
$$
u_{n,g}(x)\to u_{0,g}(x),\quad \forall\,\, x\le 0
$$
as $n\to\infty$. By Proposition \ref{comparison-principle1},  for any $h_0\in\RR$ and $n\ge m$, we have
$$
h(t;u_{n,g}(\cdot-h_0),h_0,g)\ge h(t;u_{m,g}(\cdot-h_0),h_0,g)\quad \forall\,\, t>0
$$
and
$$
u(t,x;u_{n,g}(\cdot-h_0),h_0,g)\ge u(t,x;u_{m,g}(\cdot-h_0),h_0,g)\quad \forall\,\, x\le h(t;u_{m,g}(\cdot-h_0),h_0,g), \,\, t\ge 0.
$$
Let
$$
h(t;u_{0,g}(\cdot-h_0),h_0,g)=\lim_{n\to\infty} h(t;u_{n,g}(\cdot-h_0),h_0,g)\quad \forall\,\, t\ge 0
$$
and
$$
u(t,x;u_{0,g}(\cdot-h_0),h_0,g)=\begin{cases}
\lim_{n\to\infty}u(t,x;u_{n,g}(\cdot-h_0),h_0,g),\,\,& x<h(t;u_{0,g}(\cdot-h_0),h_0,g)\cr
0\quad &x=h(t;u_{0,g}(\cdot-h_0),h_0,g).
\end{cases}
$$
Then we have that $(u(t,x;u_{0,g}(\cdot-h_0),h_0,g),h(t;u_{0,g}(\cdot-h_0),h_0,g))$ is a solution of
\eqref{aux-main-eq2-hull} for $t>0$ and
$$
(u(0,x;u_{0,g}(\cdot-h_0),h_0,g),h(0;u_{0,g}(\cdot-h_0),h_0,g))=(u_{0,g}(x-h_0),h_0)\quad \forall\,\, x\le h_0.
$$

\begin{lemma}
\label{zero-number-lm3}
For any given $g\in H(f)$,  $h_{10},h_{20}\in \RR$ and $u_{20}$ satisfying \eqref{initial-value-1} with
$h_0=h_{20}$ and $u_{20}(x)<u_g(0)$ for all $x\le h_{20}$, there holds
$$
u(s,x+h(s;u_{0,g}(\cdot-h_{10}),h_{10},g);u_{0,g}(\cdot-h_{10}),h_{10},g)\ge u(s,x+h(s;u_{20},h_{20},g);u_{20},h_{20},g)
$$
for all $x\le 0$ and $s\ge 0$.
\end{lemma}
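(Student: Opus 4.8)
The plan is to prove the inequality first for the smooth approximating data $u_{n,g}(\cdot-h_{10})$ and then let $n\to\infty$. By the construction preceding the lemma, $h(t;u_{0,g}(\cdot-h_{10}),h_{10},g)=\lim_{n\to\infty}h(t;u_{n,g}(\cdot-h_{10}),h_{10},g)$ and $u(t,\cdot;u_{n,g}(\cdot-h_{10}),h_{10},g)\to u(t,\cdot;u_{0,g}(\cdot-h_{10}),h_{10},g)$ (the latter locally uniformly, by interior parabolic estimates), so it suffices to establish, for each $n$,
\[
u\big(s,x+h(s;u_{n,g}(\cdot-h_{10}),h_{10},g);u_{n,g}(\cdot-h_{10}),h_{10},g\big)\ \ge\ u\big(s,x+h(s;u_{20},h_{20},g);u_{20},h_{20},g\big)
\]
for all $x\le 0$, $s\ge 0$ (at $x=0$ both sides equal $0$), and then pass to the limit using continuity of $s\mapsto h(s;\cdot)$ and local uniform continuity of the profiles.

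For fixed $n$ I would apply Lemma \ref{zero-number-lm2} with $u_{n,g}(\cdot-h_{10})$ in the role of its ``$u_{20}$'' (paired with $h_{10}$) and our $u_{20}$ in the role of its ``$u_{10}$'' (paired with $h_{20}$); its conclusion is then exactly the displayed inequality. The hypothesis to check is
\[
u_{n,g}'(x_2-h_{10})<u_{20}'(x_1)\quad\text{whenever}\quad u_{n,g}(x_2-h_{10})=u_{20}(x_1),
\]
and this is where the explicit form $u_{n,g}=H(n\,\cdot)\,u_{0,g}$ and the strict bound $u_{20}<u_g(0)$ come in: since $u_{n,g}(x_2-h_{10})=u_g(0)$ only for $x_2-h_{10}\le -1/n$ and $u_{20}$ never attains $u_g(0)$, no matching happens on the plateau; every other common value lies in $(0,u_g(0))$ and forces $x_2-h_{10}\in(-1/n,0]$, where $u_{n,g}'(x_2-h_{10})=n\,H'\big(n(x_2-h_{10})\big)\,u_g(0)$. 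Taking the cut-off $H$ with $H'<0$ on $(-1,0]$ and using that $M:=\sup_{x\le h_{20}}|u_{20}'(x)|<\infty$ while the values attained by $u_{20}$ remain in a compact subinterval of $[0,u_g(0))$ (so $n(x_2-h_{10})$ stays in a fixed compact subinterval of $(-1,0]$ on which $|H'|\ge\delta_0>0$), one gets $u_{n,g}'(x_2-h_{10})\le -n\delta_0 u_g(0)<-M\le u_{20}'(x_1)$ once $n>M/(\delta_0 u_g(0))$; so Lemma \ref{zero-number-lm2} applies for all large $n$, which is enough.

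The step I expect to be the real obstacle is the \emph{uniformity} in this last estimate: it needs the values attained by $u_{20}$ to be bounded away from $u_g(0)$, which is formally slightly stronger than the pointwise bound $u_{20}(x)<u_g(0)$. If only the pointwise bound is available (say $u_{20}(x)\to u_g(0)$ as $x\to-\infty$), I would insert one further limit: for each $\e>0$, run the same argument with the step datum of plateau height $u_g(0)+\e$ and its approximants $H(n\,\cdot)(u_g(0)+\e)$ — for which the values of $u_{20}$ are automatically bounded away from the plateau height \emph{uniformly in $n$}, so the slope estimate survives — obtaining the inequality with the height-$(u_g(0)+\e)$ step solution on the left, and then let $\e\to0$, using monotonicity in $\e$ (Proposition \ref{comparison-principle1}) together with the monotone-limit/continuous-dependence argument that defines the step solution. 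The two passages to the limit and the reduction to the $u_{n,g}$'s are otherwise routine.
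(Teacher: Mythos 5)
Your proposal follows the same two-step route as the paper: approximate the step datum by the smooth $u_{n,g}(\cdot-h_{10})$, invoke Lemma~\ref{zero-number-lm2} for each fixed $n$, and pass to the monotone limit $n\to\infty$. Where you go beyond the paper is in actually verifying the slope hypothesis of Lemma~\ref{zero-number-lm2}. The paper's proof consists of the bare assertion ``for any $n$ large enough, $u_{n,g}'(x_1)<u_{20}'(x_2)$ whenever $u_{n,g}(x_1)=u_{20}(x_2)$,'' followed by the limiting step. Your unpacking shows that this assertion is clear when $\sup_{x\le h_{20}}u_{20}(x)<u_g(0)$ (since then the matching values keep $n(x_2-h_{10})$ in a compact subinterval of $(-1,0]$ where $|H'|$ has a positive lower bound), but is not automatic under the literal hypothesis $u_{20}(x)<u_g(0)$ pointwise: if $u_{20}$ takes values arbitrarily close to $u_g(0)$, the matching point $n(x_2-h_{10})$ drifts towards $-1$ where $H'$ vanishes, and then whether $n\,H'\big(n(x_2-h_{10})\big)u_g(0)<u_{20}'(x_1)$ holds for large $n$ uniformly depends on the rate at which $u_{20}'$ decays relative to $\sqrt{u_g(0)-u_{20}}$. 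Nothing in \eqref{initial-value-1} guarantees such a rate. This is a genuine gap in the paper's one-line justification --- and it matters, because the lemma is later used (via Lemma~\ref{zero-number-lm-cor}(2) in the proof of Lemma~\ref{semi-wave-lm3}) with $u_{20}$ equal to a time slice of an entire solution that typically tends to $u_g$ at $x\to\infty$, so $\sup u_{20}=u_g(0)$ is exactly the relevant case.

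Your proposed repair --- run the argument with a step of plateau height $u_g(0)+\varepsilon$, for which the matching values of $u_{20}$ stay a distance $\varepsilon$ below the plateau so the slope estimate is uniform in $n$, then let $\varepsilon\downarrow 0$ using monotonicity in the plateau height (Proposition~\ref{comparison-principle1}) and the same monotone-limit construction that defines the step solution --- is sound, and is the natural way to close the gap. The only point to spell out is the continuity of the limiting step solution in the plateau height, which follows from the monotone approximation scheme already set up before the lemma together with interior parabolic estimates; you flag this correctly. So your proposal is a more careful version of the paper's proof of exactly the same lemma by exactly the same strategy, and in fact fixes an unjustified step in it.
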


\begin{proof}
First, we note that for any $n$ large enough, $u^{'}_{n,g}(x_1)<u^{'}_{20}(x_2)$ for any $x_1,x_2$ satisfying
that $u_{n,g}(x_1)=u_{20}(x_2)$. Then by Lemma \ref{zero-number-lm2},
$$
u(s,x+h(s;u_{n,g}(\cdot-h_{10}),h_{10},g);u_{n,g}(\cdot-h_{10}),h_{10},g)\ge u(s,x+h(s;u_{20},h_{20},g);u_{20},h_{20},g)
$$
for all $x\le 0$, $s\ge 0$, and $n\gg 1$. Letting $n\to\infty$, we have
$$
u(s,x+h(s;u_{0,g}(\cdot-h_{10}),h_{10},g);u_{0,g}(\cdot-h_{10}),h_{10},g)\ge u(s,x+h(s;u_{20},h_{20},g);u_{20},h_{20},g)
$$
for all $x\le 0$ and $s\ge 0$. The lemma is thus proved.
\end{proof}

\subsection{Basic properties of diffusive KPP equations in fixed unbounded domains}

In this section, we presentation some basic properties of solutions of \eqref{aux-main-eq3} and \eqref{aux-main-eq3-hull}.

First of all, by the relation between the solutions of
\eqref{aux-main-eq3-hull} and \eqref{aux-main-eq2-hull}, we have

\begin{lemma}
\label{zero-number-lm-cor}
\begin{itemize}
\item[(1)]
For given $u_{01},u_{02}\in \hat X^+$, if $u^{'}_{01}(x)\ge 0$, $u^{'}_{02}(x)\ge 0$,
and $u^{'}_{02}(x_2)>u_{01}^{'}(x_1)$ for any $x_1,x_2\ge 0$ satisfying that $u_{01}(x_1)=u_{02}(x_2)$,
then
$$
u(t,x;u_{01},g)\le u(t,x;u_{02},g) \quad \forall\,\, x\ge 0,\, \, t\ge 0.
$$

\item[(2)] For any $u_0\in \hat X^+$ with $u_0(x)<u_g(0)$, there holds
$$
u(t,x;\tilde u_{0,g},g)\ge u(t,x;u_0,g)\quad \forall\,\, x\ge 0,\, \, t\ge 0,
$$
where $\tilde u_{0,g}(x)=u_{0,g}(-x)$ and $u(t,x;\tilde u_{0,g},g)=u(t,h(t;u_{0,g},0,g)-x;u_{0,g},0,g)$.
\end{itemize}
\end{lemma}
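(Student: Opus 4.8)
\textbf{Proof proposal for Lemma \ref{zero-number-lm-cor}.}

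The plan is to deduce both parts from the correspondence between solutions of the fixed-domain problem \eqref{aux-main-eq3-hull} and the free-boundary problem \eqref{aux-main-eq2-hull}, together with the zero-number comparison results of the previous subsection (Lemmas \ref{zero-number-lm2} and \ref{zero-number-lm3}). Recall that if $\tilde u(t,x)$ solves \eqref{aux-main-eq3-hull} with $\tilde u(0,\cdot)=u_{0i}$, then $u_i(t,x):=\tilde u_i(t,h_i(t)-x)$ with $h_i(t)=\mu\int_0^t (\tilde u_i)_x(s,0)\,ds$ solves \eqref{aux-main-eq2-hull} with initial data $u_{0i}(h_i(0)-\cdot)=u_{0i}(-\cdot)$ and free boundary starting at $h_i(0)=0$. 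Thus the statement
$u(t,x;u_{01},g)\le u(t,x;u_{02},g)$ for all $x\ge 0$, $t\ge 0$ is equivalent to
$u(t,x+h(t;u_{20},0,g);u_{20},0,g)\ge u(t,x+h(t;u_{10},0,g);u_{10},0,g)$ for all $x\le 0$, $t\ge 0$,
where $u_{i0}(x):=u_{0i}(-x)$ (so that $u_{i0}(0)=u_{0i}(0)=0$, satisfying \eqref{initial-value-1} with $h_0=0$), after translating back via $x\mapsto h_i(t)-x$.

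For part (1), I would first translate the hypothesis on $u_{01},u_{02}$ into the hypothesis required by Lemma \ref{zero-number-lm2}. Since $u_{i0}(x)=u_{0i}(-x)$, we have $u_{i0}'(x)=-u_{0i}'(-x)$; the assumption $u_{02}'(x_2)>u_{01}'(x_1)$ whenever $u_{01}(x_1)=u_{02}(x_2)$ (with both derivatives $\ge 0$) translates into $u_{20}'(y_2)<u_{10}'(y_1)$ whenever $u_{20}(y_2)=u_{10}(y_1)$, which is exactly the hypothesis of Lemma \ref{zero-number-lm2} with $h_{10}=h_{20}=0$. Applying that lemma gives
$u(s,x+h(s;u_{20},0,g);u_{20},0,g)\ge u(s,x+h(s;u_{10},0,g);u_{10},0,g)$ for $x\le 0$, $s\ge 0$,
and translating back to the fixed-domain picture (replacing $x$ by $h_i(s)-x$, $x\ge 0$) yields $u(t,x;u_{01},g)\le u(t,x;u_{02},g)$ as claimed. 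For part (2), the same dictionary turns the statement into an application of Lemma \ref{zero-number-lm3}: the hypothesis $u_0(x)<u_g(0)$ for all $x\ge 0$ becomes $u_{20}(x)<u_g(0)$ for $x\le 0=h_{20}$, and $u_{0,g}$ is precisely the limiting initial datum built in the construction preceding Lemma \ref{zero-number-lm3}; invoking that lemma with $h_{10}=h_{20}=0$ and then translating back gives $u(t,x;\tilde u_{0,g},g)\ge u(t,x;u_0,g)$ for $x\ge 0$, $t\ge 0$.

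The main obstacle I anticipate is bookkeeping rather than anything deep: one must check carefully that the change of variables $x\mapsto h_i(t)-x$ is applied consistently (in particular that the inequality directions and the domains $\{x\ge 0\}$ versus $\{x\le 0\}$ match up), that the translated initial data genuinely satisfy \eqref{initial-value-1} so that the free-boundary solutions and the hypotheses of Lemmas \ref{zero-number-lm2}--\ref{zero-number-lm3} are legitimately available, and—for part (2)—that the definition $u(t,x;\tilde u_{0,g},g)=u(t,h(t;u_{0,g},0,g)-x;u_{0,g},0,g)$ given in the statement is exactly the image under the correspondence of the free-boundary solution with initial datum $u_{0,g}(\cdot-h_0)|_{h_0=0}$. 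Once these identifications are made, the proof is a direct citation of the two preceding lemmas, so the write-up should be short.
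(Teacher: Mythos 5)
Your proposal is correct and follows the paper's approach exactly; the paper's own proof is the terse ``(1) It follows directly from Lemma \ref{zero-number-lm2}. (2) It follows from Lemma \ref{zero-number-lm3},'' relying implicitly on the change-of-variables dictionary $\tilde u(t,x)=u(t,h(t)-x)$ between \eqref{aux-main-eq3-hull} and \eqref{aux-main-eq2-hull} that is stated just before the lemma. Your write-up simply makes that translation explicit, and the bookkeeping (the reversal of derivative-comparison inequalities under $u_{i0}(x)=u_{0i}(-x)$, the domain flip $x\ge 0\leftrightarrow x\le 0$, and the identification of $u(t,\cdot;\tilde u_{0,g},g)$ with the free-boundary solution started from $u_{0,g}$ at $h_0=0$) all check out.
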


\begin{proof}
(1) It follows directly from Lemma \ref{zero-number-lm2}.

(2) It follows from  Lemma \ref{zero-number-lm3}.
\end{proof}

\begin{lemma}
\label{monotonicity-for-semi-wave-lm}
Consider \eqref{aux-main-eq3-hull}.
For any $u_0\in\hat X^+$ with   $u^{'}_0(x)\ge 0$ and $u^{'}_0(0)>0$, then
$u_x(t,x;u_0,g)>0$ for all
$t>0$, $x\ge 0$,  and $g\in H(f)$.
\end{lemma}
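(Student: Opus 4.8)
The plan is to show that $v:=u_x(\cdot,\cdot;u_0,g)$ satisfies a linear parabolic equation on the half line and then to propagate nonnegativity of $v$ by comparison, upgrading to strict positivity via the strong maximum principle in the interior and the Hopf boundary lemma at $x=0$. Differentiating \eqref{aux-main-eq3-hull} in $x$ (which is legitimate in $\{t>0,\ x>0\}$ by interior parabolic regularity, since $g$ is $C^1$) and using that $u_x(t,0)$ depends only on $t$, one obtains
\begin{equation*}
v_t=v_{xx}-\mu u_x(t,0)\,v_x+\big(g(t,u)+u\,g_u(t,u)\big)v,\qquad x>0,\ t>0,
\end{equation*}
with $v(0,x)=u_0'(x)$ and $v(t,0)=u_x(t,0)$, where $u=u(t,x;u_0,g)$. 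Since $0\le u\le M(u_0)$ and $|u_x|\le M(u_0)$, by (H1) and (H2) the zeroth order coefficient $c(t,x):=g(t,u)+u\,g_u(t,u)$ and the drift $\mu u_x(t,0)$ are bounded, and $|v|\le M(u_0)$.

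Next I would settle the sign of the data. As $u_0\ge 0$, $u_0(0)=0$ and $u_0'(0)>0$, $u_0\not\equiv 0$; writing the $u$-equation as a linear equation $u_t=u_{xx}-\mu u_x(t,0)u_x+g(t,u)u$ with bounded coefficients and $u\ge 0$, $u(t,0)=0$, the strong maximum principle gives $u(t,x;u_0,g)>0$ for all $t>0$, $x>0$. Because $u(t,\cdot)$ attains its minimum value $0$ at $x=0$, we have $u_x(t,0)\ge 0$ for all $t\ge 0$, and, for each fixed $t>0$, the Hopf boundary lemma applied at $x=0$ gives $u_x(t,0)>0$; at $t=0$, $u_x(0,0)=u_0'(0)>0$. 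Hence $v(0,\cdot)=u_0'\ge 0$, $v(t,0)=u_x(t,0)\ge 0$ for all $t\ge 0$, and in fact $v(t,0)>0$ for $t>0$.

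Finally I would prove $v\ge 0$ everywhere. Put $w:=e^{-\lambda t}v$ with $\lambda>\sup|c|$; then $w$ is bounded, satisfies an equation of the same form with zeroth order coefficient $c-\lambda<0$, and $w\ge 0$ on the parabolic boundary $\{t=0\}\cup\{x=0\}$. Comparing with a barrier of the form $\epsilon(1+x^2)e^{\beta t}$ (with $\beta$ large depending only on the bounds of the coefficients), which is coercive in $x$ and a strict supersolution, one rules out a nonpositive interior minimum on $[0,T]\times[0,\infty)$ for every $T>0$, so $w\ge 0$, i.e. $v\ge 0$. Since $v\ge 0$ solves a linear parabolic equation and, by continuity of $u_0'$ and $u_0'(0)>0$, $v(0,\cdot)>0$ near $x=0$ so $v\not\equiv 0$, the strong maximum principle forces $v(t,x)>0$ for all $t>0$, $x>0$; together with $v(t,0)=u_x(t,0)>0$ for $t>0$ this yields $u_x(t,x;u_0,g)>0$ for all $t>0$, $x\ge 0$. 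All constants ($M(u_0)$, $\sup|c|$, the drift bound) may be chosen uniformly for $g\in H(f)$ by compactness of $H(f)$, so the conclusion is uniform in $g\in H(f)$.

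I expect the main obstacle to be the rigorous justification that $v=u_x$ solves the displayed linear equation where it is needed — namely interior parabolic regularity adequate to differentiate in $x$, together with the correct reading of the nonlocal term $u_x(t,0)$ as merely a bounded $t$-dependent drift — and the treatment of the boundary point $x=0$, where one cannot differentiate the equation but instead extracts $v(t,0)=u_x(t,0)>0$ directly from the Hopf lemma for $u$. The unbounded-domain maximum principle is routine, but genuinely relies on the a priori bound $|u_x|\le M(u_0)$ recorded earlier.
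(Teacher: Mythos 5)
Your proof is correct and follows the same route as the paper: differentiate the equation in $x$ to obtain a linear parabolic problem for $v=u_x$ with nonnegative boundary and initial data, deduce $v\ge 0$ by comparison, and then upgrade to strict positivity via the strong maximum principle and the Hopf lemma. The only differences are that you make explicit the unbounded-domain comparison (via a coercive barrier) and extract $u_x(t,0)>0$ for $t>0$ directly from the Hopf lemma applied to $u$, details the paper leaves implicit but which do not change the structure of the argument.
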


\begin{proof}
First of all, it is easily known that $u_0(x)>0$ for $x>0$.
By comparison principle for parabolic equations,
$u(t,x;u_0,g)\ge 0$ for all $t\ge 0$, $x\ge 0$ and $g\in H(f)$. Hence
$$
u_x(t,0;u_0,g)\ge 0\quad \forall\,\, t\ge 0, \,\, x\ge 0,\,\,{\rm and}\,\, g\in H(f).
$$
 Note that $v(t,x)=u_x(t,x;u_0,g)$ is the solution
of
$$
\begin{cases}
v_t=v_{xx}-\mu u_x(t,0;u_0,g)v_x(t,x)+[g(t,u(t,x;u_0,g))\cr
\qquad\qquad +u(t,x;u_0,g)g_u(t,u(t,x;u_0,g))]v(t,x),
\quad 0<x<\infty\cr
v(t,0)\ge 0\cr
v(0,x)=u^{'}_0(x)\ge 0.
\end{cases}
$$
Then by comparison principle for parabolic equations again,
$$u_x(t,x;u_0,g)\ge 0\quad \forall\,\,
t>0,\,\, x\ge 0, \,\,{\rm and}\,\, g\in H(f).
$$

Next, by Hopf Lemma and strong maximum principle for parabolic equations, we have
$$
u_x(t,x;u_0,g)>0\quad \forall\,\, t>0,\,\, x\ge 0\,\, {\rm and}\,\, g\in H(f).
$$
\end{proof}

For given $u_1,u_2\in\hat X^{++}$ with $u_1(\cdot)\le u_2(\cdot)$, we define a metric, $\rho(u_1,u_2)$, between $u_1$ and $u_2$
as follows,
$$
\rho(u_1,u_2)=\inf\{\ln\alpha\,|\, \alpha\ge 1,\,\, u_2(\cdot)\le \alpha u_1(\cdot)\}.
$$
For given $u_1,u_2\in\hat X^{++}$ with $u_i^{'}(0)>0$ and  $u_i^{'}(x)\ge 0$, by Lemma \ref{monotonicity-for-semi-wave-lm},
$u(t,\cdot;u_i,g)\in \hat X^{++}$ for $t>0$ and $g\in H(f)$.

\begin{lemma}
\label{part-metric-lm}
 Consider \eqref{aux-main-eq3-hull}.
For any $u_0,v_0\in\hat X^{++}$ with $u_0(\cdot)\not =v_0(\cdot)$, if
$u(t,\cdot;u_0,g)$, $u(t,\cdot;v_0,g)\in \hat X^{++}$, and $u(t,\cdot;u_0,g)\le u(t,\cdot;v_0,g)$
for all $t>0$,
then
$$
\rho(u(t_2,\cdot;u_0,g),u(t_2,\cdot;v_0,g))\le \rho(u(t_1,\cdot;u_0,g),u(t_1,\cdot;v_0,g))
$$
for all $0\le t_1< t_2$ and $g\in H(f)$. Moreover, if $\lim_{x\to\infty}u_0(x)=\lim_{x\to\infty}v_0(x)$, then
$$
\rho(u(t_2,\cdot;u_0,g),u(t_2,\cdot;v_0,g))< \rho(u(t_1,\cdot;u_0,g),u(t_1,\cdot;v_0,g)).
$$
\end{lemma}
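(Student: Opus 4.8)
The plan is to run a Birkhoff/part-metric argument directly on \eqref{aux-main-eq3-hull}. Write $U(t,x)=u(t,x;u_0,g)$, $V(t,x)=u(t,x;v_0,g)$ and set $p(t)=U_x(t,0)$, $q(t)=V_x(t,0)$; once $U$ and $V$ are fixed they solve the reaction--diffusion equations $U_t=U_{xx}-\mu p(t)U_x+Ug(t,U)$ and $V_t=V_{xx}-\mu q(t)V_x+Vg(t,V)$ on $(0,\infty)\times(0,\infty)$ with prescribed bounded drift coefficients. From $U(t,0)=V(t,0)=0$, $U\le V$ and $U,V\ge 0$ we get $0\le p(t)\le q(t)$ for $t>0$; moreover $U,V,U_x,V_x$ are bounded, $U(t,\cdot),V(t,\cdot)\in\hat X^{++}$, and by Lemma~\ref{monotonicity-for-semi-wave-lm} (cf.\ the remark preceding this lemma) $U(t,\cdot)$ and $V(t,\cdot)$ are nondecreasing in $x$ for $t>0$. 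Finally, since $u_0\not\equiv v_0$ we have $\rho(U(t_1,\cdot),V(t_1,\cdot))>0$, i.e.\ $\alpha_1:=e^{\rho(U(t_1,\cdot),V(t_1,\cdot))}>1$ (for $t_1>0$ this uses backward uniqueness for parabolic equations).

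For the monotonicity of $\rho$ along $t$, fix $t_1\ge 0$ and set $\alpha=\alpha_1$, so $V(t_1,\cdot)\le\alpha U(t_1,\cdot)$. The key step is that $\alpha U$ is a supersolution of the equation solved by $V$: substituting $U$'s equation,
\[
(\alpha U)_t-(\alpha U)_{xx}+\mu q(t)(\alpha U)_x-(\alpha U)g(t,\alpha U)=\alpha\Big[\mu\big(q(t)-p(t)\big)U_x+U\big(g(t,U)-g(t,\alpha U)\big)\Big]\ge 0,
\]
because $q\ge p$, $U_x\ge 0$, $U\ge 0$, and $g(t,U)\ge g(t,\alpha U)$ since $\alpha\ge 1$ and $g(t,\cdot)$ is strictly decreasing (this is where the monostability in (H1) enters). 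As $\alpha U(t,0)=0=V(t,0)$ and $\alpha U(t_1,\cdot)\ge V(t_1,\cdot)$, the comparison principle for $V$'s equation gives $V(t,\cdot)\le\alpha U(t,\cdot)$ for all $t\ge t_1$, whence $\rho(U(t,\cdot),V(t,\cdot))\le\ln\alpha=\rho(U(t_1,\cdot),V(t_1,\cdot))$ for $t\ge t_1$, and in particular for $t=t_2$.

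For the strict inequality, assume in addition $\lim_{x\to\infty}u_0(x)=\lim_{x\to\infty}v_0(x)$ and suppose toward a contradiction that $\rho(U(t_2,\cdot),V(t_2,\cdot))=\rho(U(t_1,\cdot),V(t_1,\cdot))=\ln\alpha$ with $\alpha>1$; by the monotonicity just proved, $\rho(U(s,\cdot),V(s,\cdot))=\ln\alpha$ for every $s\in[t_1,t_2]$. Fix $s\in(t_1,t_2)$. Using $U_x(s,0)>0$, the ratio $V(s,\cdot)/U(s,\cdot)$ extends to a continuous function on $[0,\infty)$ with $\sup_{x\ge 0}V(s,x)/U(s,x)=\alpha$. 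I first claim this supremum is attained at a finite point: the difference $D=V-U\ge 0$ satisfies $D_t\le D_{xx}-\mu q D_x+c(t,x)D$ with $c$ bounded (again dropping $-\mu(q-p)U_x\le 0$), $D(t,0)=0$, and $D(0,\cdot)=v_0-u_0$ bounded with limit $0$ at $x=\infty$; comparing $D$ with the solution of the associated linear problem (whose data decays at infinity) gives $D(s,x)\to 0$ as $x\to\infty$, and since $U(s,\cdot)$ is bounded below away from $x=0$ this forces $V(s,x)/U(s,x)\to 1<\alpha$, so $\alpha$ is attained at some $x_s\in[0,\infty)$. Put $Z=\alpha U-V\ge 0$, which satisfies $Z_t-Z_{xx}+\mu p Z_x-g(t,V)Z\ge 0$ on $[t_1,\infty)\times[0,\infty)$ (by the equations, using $q\ge p$, monotonicity and monostability as above). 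If $x_s>0$ then $Z(s,x_s)=0$ at an interior point with $U(s,x_s)>0$, and the strong maximum principle forces $Z\equiv 0$ on $[t_1,s]\times[0,\infty)$; if $x_s=0$ then $q(s)/p(s)=\alpha$, i.e.\ $Z(s,0)=Z_x(s,0)=0$, and the Hopf boundary lemma again forces $Z\equiv 0$ on $[t_1,s]\times[0,\infty)$. In either case $V\equiv\alpha U$ there, and subtracting the two equations yields $\mu(q-p)\alpha U_x=\alpha U\big(g(t,\alpha U)-g(t,U)\big)$; the left side is $\ge 0$ while, since $\alpha>1$ and $g(t,\cdot)$ is strictly decreasing, the right side is $<0$ for every $x>0$ — a contradiction. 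Hence $\rho(U(t_2,\cdot),V(t_2,\cdot))<\rho(U(t_1,\cdot),V(t_1,\cdot))$.

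The main obstacle is that the nonlocal term makes $U$ and $V$ solve equations with different, solution-dependent drifts $-\mu p(t)$ and $-\mu q(t)$; the argument closes only because $q\ge p$ and the solutions are nondecreasing in $x$, so the spurious term $\mu(q-p)U_x$ carries the right sign. The second delicate point is the strict-decrease step: one must prevent the supremum of $V/U$ from escaping to $x=\infty$, which is precisely where the hypothesis $\lim_{x\to\infty}u_0=\lim_{x\to\infty}v_0$ is used, and then combine the strong maximum principle / Hopf lemma with the strict concavity (monostability) of the reaction to reach the contradiction.
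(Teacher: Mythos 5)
Your proof of the non-strict monotonicity of $\rho$ is essentially the paper's argument: write $p(t)=U_x(t,0)\le q(t)=V_x(t,0)$ by Hopf at $x=0$, observe $\alpha U$ is a supersolution of $V$'s equation because $\mu(q-p)U_x\ge 0$ (spatial monotonicity) and $g(t,U)\ge g(t,\alpha U)$ (monostability), and invoke the comparison principle. The paper's displayed chain of inequalities for $w=\alpha^*U$ is exactly this computation, only bundled differently. One remark: the non-strict part does not actually require $\alpha_1>1$ (if $\rho(U(t_1),V(t_1))=0$ the claim is trivial), so the aside about backward uniqueness at $t_1>0$ is unnecessary there; it matters only for the strict-decrease part, where both you and the paper implicitly need $U(t_1,\cdot)\ne V(t_1,\cdot)$ to apply the argument at time $t_1$.

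For the strict decrease, you take a genuinely different route from the paper, though both ultimately combine the same tools. The paper works directly: for each fixed $t>0$, using (i) the Hopf lemma at $x=0$ applied to $\alpha^*U-V$ to get $V_x(t,0)<\alpha^*U_x(t,0)$, (ii) the common limit $u(t;u_\infty,g)$ at $x=\infty$ so that $V/( \alpha^*U)\to 1/\alpha^*<1$, and (iii) strong maximum principle in the interior, it concludes the ratio $V/(\alpha^*U)$ has supremum some $\beta<1$, hence $\rho(U(t),V(t))\le\ln(\beta\alpha^*)<\ln\alpha^*$. Your version argues by contradiction: assuming $\rho$ is constant $=\ln\alpha$ on $[t_1,t_2]$, you show the supremum of $V/(\alpha U)=1$ is attained at a finite point (again using the common limit at infinity and boundedness below of $U$ away from $x=0$), and then the strong maximum principle (interior attainment) or Hopf lemma (boundary attainment) forces $V\equiv\alpha U$, which subtracting the two equations makes $\mu(q-p)\alpha U_x=\alpha U\bigl(g(t,\alpha U)-g(t,U)\bigr)$ with opposite signs on the two sides. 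Both are correct; the paper's direct construction of $\beta<1$ is marginally shorter, while your contradiction route isolates more cleanly where each hypothesis enters (notably, exactly where the common limit at $x=\infty$ prevents the supremum from escaping). Your explicit reliance on $U(t,\cdot)$, $V(t,\cdot)$ being nondecreasing in $x$ via Lemma \ref{monotonicity-for-semi-wave-lm} is appropriate; the paper uses the same fact implicitly (it is built into the paragraph preceding the lemma, where $u_i'(0)>0$ and $u_i'\ge 0$ are imposed).
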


\begin{proof}
First,
for any $u_0, v_0\in \hat X^{++}$ with $u_0(\cdot)\le v_0(\cdot)$, $u_0(\cdot)\not = v_0(\cdot)$,
 there is $\alpha^*>1$ such that  $\rho(u_0,v_0)=\ln \alpha^{*}$
and $ v_0\leq \alpha^{*} u_0$.
Let
 $$w(t, x)=\alpha^{*}u(t, x; u_0,g)$$
We then have
\begin{align*}
w_{t}(t,x)&=w_{xx}(t,x)-\mu u_x(t,0;u_0,g)w_x(t,x)+w(t,x)g(t,u(t,x;u_0,g))\\
& =
w_{xx}(t,x)-\mu u_x(t,0;u_0,g)w_x(t,x)+w(t,x)g(t,w(t,x))\\
&\qquad +w(t,x)g(t,u(t,x;u_0,g))-w(t,x)g(t,w(t,x))\\
&> w_{xx}(t,x)-\mu u_x(t,0;u_0,g)w_x(t,x)+w(t,x)g(t,w(t,x))\\
&\geq w_{xx}(t,x)-\mu u_x(t,0;v_0,g)w_x(t,x)+w(t,x)g(t,w(t,x))
\quad \ for \ all  \  \ t>0,\quad x\in \RR^+,
\end{align*}
and
$$
w(t,0)=0,\quad \ for \ all  \  \  t>0.
$$
By comparison principle for parabolic equations, we have
$$
u(t, x;v_0,g)\le  \alpha^{*}u(t,x; u_0,g)
$$
for $t>0$ and $x>0$.
Therefore,
$$
\rho(u(t,\cdot;u_0,g),u(t,\cdot;v_0,g))\le \rho(u_0,v_0)\quad \ for \  all  \  \  t\ge 0
$$
and then
$$
\rho(u(t_2,\cdot;u_0,g),u(t_2,\cdot;v_0,g))\le \rho(u(t_1,\cdot;u_0,g),u(t_1,\cdot;v_0,g))
\quad \ for \ all \,\, 0\le t_1< t_2.
$$

Assume that $u_\infty=\lim_{x\to\infty}u_0(x)=\lim_{x\to\infty}v_0(x)$. Then for any $t>0$,
\begin{equation}
\label{aux-part-metric-eq0}
\lim_{x\to\infty}u(t,x;u_0,g)=\lim_{x\to\infty}u(t,x;v_0,g)=u(t;u_\infty,g),
\end{equation}
where $u(t;u_\infty,g)$ is the solution of \eqref{ode-eq1-hull} with $u(0;u_\infty,g)=u_\infty$.
Since $\alpha^*>1$, $u_\infty\not =\alpha^* u_\infty$. Hence $v_0\neq \alpha^* u_0$. By Hopf Lemma,
\begin{equation}
\label{aux-part-metric-eq1}
u_x(t,0;v_0,g)<\alpha^* u_x(t,0;u_0,g).
\end{equation}
By \eqref{aux-part-metric-eq0},
\begin{equation}
\label{aux-part-metric-eq3}
\lim_{x\to\infty}u(t,x;v_0,g)=u(t;u_\infty,g)<\alpha^* u(t;u_\infty,g)=\alpha^*\lim_{x\to\infty}u(t,x; u_0,g).
\end{equation}
By \eqref{aux-part-metric-eq1}-\eqref{aux-part-metric-eq3}, there is $0<\beta<1$ such that
$$
u(t,x;v_0,g)\le \beta \alpha^* u(t,x;u_0,g).
$$
It then follows that
$$
\rho(u(t,\cdot;u_0,g),u(t,\cdot;v_0,g))<\rho(u_0,v_0)
$$ and then for any $0\le t_1<t_2$,
$$
\rho(u(t_2,\cdot;u_0,g),u(t_2,\cdot;v_0,g))<\rho(u(t_1,\cdot;u_0,g),u(t_1,\cdot;v_0,g)).
$$
\end{proof}

\section{Semi-Wave Solutions and Proof of Theorem \ref{main-thm1}}

In this section, we  investigate the semi-wave solutions of
 \eqref{aux-main-eq2} and prove Theorem \ref{main-thm1}.

We first prove some lemmas.

\begin{lemma}
\label{semi-wave-lm1}
Let $g\in H(f)$ be given. There is $u_0\in \hat X^{++}$ such that
$u_0^{'}(x)\ge 0$ for $x\ge 0$ and $\inf_{t\ge 0}u_x(t,0;u_0,g)>0$.
\end{lemma}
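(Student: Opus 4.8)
The plan is to get the lower bound by comparing \eqref{aux-main-eq3-hull} with the constant--advection equation \eqref{aux-semi-wave-eq1}, for which Lemma~\ref{semi-wave-lm1-1} already delivers a boundary--slope bound that is uniform in $t$ and $g$, exploiting crucially the sign of the advection term $-\mu u_x(t,0)u_x$. First I would fix $\epsilon>0$ small enough that $\tilde\lambda(a,\epsilon\mu,l)>0$ for $l\gg1$ (possible by Remark~\ref{Lyapunov-exp-rk}), so that Lemma~\ref{semi-wave-lm1-1} applies with advection $-\epsilon\mu u_x$. Next I would take a candidate $u_0\in\hat X^{++}$ that is nondecreasing with $u_0'(0)>0$, say a smooth nondecreasing function that vanishes at $x=0$, equals a small positive constant for $x\ge1$, and has positive slope at the origin. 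Then, by Lemma~\ref{monotonicity-for-semi-wave-lm}, $u_x(t,x;u_0,g)\ge0$ for all $t>0$, $x\ge0$, $g\in H(f)$, and by the a priori bounds recorded in Section~4 there is $M=M(u_0)>0$ with $\sup_{t\ge0,\,x\ge0}|u_x(t,x;u_0,g)|\le M$.

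The comparison step is as follows. On any time interval $[s_1,s_2]$ on which $u_x(t,0;u_0,g)\le\epsilon$, let $v$ solve \eqref{aux-semi-wave-eq1} with $v(s_1,\cdot)=u(s_1,\cdot;u_0,g)$. Since $v(s_1,\cdot)$ is nondecreasing, the argument of Lemma~\ref{monotonicity-for-semi-wave-lm} applied to \eqref{aux-semi-wave-eq1} gives $v_x\ge0$, and hence $v_t=v_{xx}-\epsilon\mu v_x+vg(t,v)\le v_{xx}-\mu u_x(t,0;u_0,g)v_x+vg(t,v)$ on $[s_1,s_2]$ because $(\epsilon-u_x(t,0;u_0,g))\mu v_x\ge0$ there; that is, $v$ is a subsolution of \eqref{aux-main-eq3-hull} on this interval. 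As $v(s_1,\cdot)=u(s_1,\cdot;u_0,g)$ and both functions vanish at $x=0$, the comparison principle gives $u(t,x;u_0,g)\ge v(t,x)$, and therefore $u_x(t,0;u_0,g)\ge v_x(t,0)$, for $t\in[s_1,s_2]$. On the complementary set of times one has $u_x(t,0;u_0,g)>\epsilon$, so nothing is needed there.

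What remains, and what I expect to be the main obstacle, is to bound $v_x(t,0)$ from below uniformly over the (possibly infinitely many, and a priori uncontrolled) re-entry times $s_1$: because the advection coefficient $\mu u_x(t,0;u_0,g)$ in \eqref{aux-main-eq3-hull} is itself determined by the solution, the initial data $u(s_1,\cdot;u_0,g)$ of $v$ move around with $s_1$, and one needs them all to dominate a single fixed profile $\psi\in\hat X^{++}$. To handle this I would compare $u(\cdot,\cdot;u_0,g)$ from below, on long finite intervals $[0,l]$, with the stable time almost periodic solutions $u^l_{\epsilon,g}$ of \eqref{aux-linear-semi-wave-eq1} --- which is legitimate whenever $u_x(\cdot,0;u_0,g)\le\epsilon$, since then $u|_{[0,l]}$ is a supersolution of \eqref{aux-linear-semi-wave-eq1} --- exactly as in the proof of Lemma~\ref{semi-wave-lm1-1}; this should force $u(s,\cdot;u_0,g)$ to stay above a fixed $\psi\in\hat X^{++}$ for all $s\ge0$. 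Given such $\psi$, one has $v(t,\cdot)\ge u_\epsilon(t,\cdot;\psi,g)$ on each interval $[s_1,s_2]$, hence $v_x(t,0)\ge\partial_x u_\epsilon(t,0;\psi,g)\ge\delta:=\inf_{t\ge0,\,g\in H(f)}\partial_x u_\epsilon(t,0;\psi,g)>0$ by Lemma~\ref{semi-wave-lm1-1}, and combining the two cases yields $\inf_{t\ge0}u_x(t,0;u_0,g)\ge\min\{\epsilon,\delta\}>0$, which is the claim.
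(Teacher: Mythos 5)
Your overall strategy is the same as the paper's: comparison with the constant--advection equation \eqref{aux-semi-wave-eq1} on the ``bad'' intervals where $u_x(t,0;u_0,g)\le\epsilon$, then invoke Lemma~\ref{semi-wave-lm1-1}. You also correctly locate the crux, namely that the comparison on each re-entry interval starts from a moving initial datum $u(s_1,\cdot;u_0,g)$, and that one needs these data to be controlled uniformly.

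The gap is in the sentence ``this should force $u(s,\cdot;u_0,g)$ to stay above a fixed $\psi\in\hat X^{++}$ for all $s\ge 0$.'' Your proposed mechanism --- compare $u|_{[0,l]}$ from below with $u^l_{\epsilon,g}$ --- is only ``legitimate'' (your word) while $u_x(t,0;u_0,g)\le\epsilon$, i.e.\ on the bad intervals. It gives no lower bound across the complementary ``good'' intervals, where the self-consistent advection coefficient $\mu u_x(t,0;u_0,g)$ exceeds $\epsilon\mu$ and is no longer dominated by the $\epsilon$-advection; on those intervals $u$ is a \emph{sub}solution of the $\epsilon$-equation, and the profile can degrade before the next bad interval begins. (It also fails on short bad intervals, where the comparison solution has not yet approached $u^l_{\epsilon,g}$.) So the uniform lower bound $\psi$ is not established, and the conclusion $v_x(t,0)\ge\delta$ is not justified at the uncontrolled re-entry times, which is precisely what you flagged as the main obstacle.

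The paper closes exactly this gap with a different device, not present in your sketch: a zero-number/intersection argument (Lemmas~\ref{zero-number-lm2} and \ref{zero-number-lm-cor}). At the first time $t_1$ with $u_x(t_1,0;u_0,g)=\epsilon$, the a priori bound $|u_{xx}|\le K$ gives $u_x(t_1,x)\ge\epsilon/2$ on $[0,\epsilon/(2K)]$ and $u(t_1,\epsilon/(2K))\ge\epsilon^2/(4K)$; since $u_0$ was chosen with $\|u_0\|_{\hat X}\ll\min\{\epsilon/2,\epsilon^2/(4K)\}$, the profile at time $t_1$ has strictly larger slope than $u_0$ at every common level. Lemma~\ref{zero-number-lm-cor} then yields $u(t+t_1,\cdot;u_0,g)\ge u(t,\cdot;u_0,g\cdot t_1)$ for all $t\ge 0$. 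This is the key point you are missing: instead of trying to bound the re-entry data $u(s_1,\cdot)$ from below by a fixed $\psi$, the paper restarts the comparison from the \emph{same} fixed $u_0$ (with $g$ time-shifted), so Lemma~\ref{semi-wave-lm1-1} applies with a single datum on every subinterval $[t_1+\cdots+t_{n-1},t_1+\cdots+t_n)$. Together with the assertion $\inf_n t_n>0$ (so the restarts exhaust $[0,\infty)$), this gives the uniform lower bound. Without the zero-number restart your argument does not close.
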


\begin{proof}
Let $\epsilon>0$ be given such that $\tilde\lambda(a,\epsilon\mu,l)>0$ for $l\gg 1$.

First of all, there is $K>0$ such that
$$
0\le u_x(t,x;u_0,g)\le K,\,\, |u_{xx}(t,x;u_0,g)|\le K
$$
for any $u_0\in \hat X^{++}$ with $u_0^{'}(x)\ge 0$ for $x\ge 0$, $u_0^{'}(x)=0$ for $x\ge 1$, and
$\|u_0\|_{\hat X}\ll \min\{\frac{\epsilon}{2},\frac{\epsilon^2}{4K}\}$.  Fix such a
$u_0$ with $u_0(\cdot)\not \equiv 0$.

Observe that $u_x(t,0;u_0,g)<\epsilon$ for $0<t\ll 1$. Let
$$
t_1=\sup\{\tau \,|\, u_x(t,0;u_0,g)<\epsilon,\,\, \forall\,\, t\in [0,\tau)\}.
$$
Then
$u_x(t,0;u_0,g)<\epsilon$ for $t\in (0,t_1)$ and $u_x(t_1,0;u_0,g)=\epsilon$ in the case $t_1<\infty$.
By comparison principle for parabolic equations,
\begin{equation}
\label{aux-semi-wave-eq2}
u(t,x;u_0,g)\ge u_\epsilon(t,x;u_0,g)\quad {\rm for}\quad 0\le t < t_1,
\end{equation}
where $u_\epsilon(t,x;u_0,g)$ is the solution of \eqref{aux-semi-wave-eq1} with
$u_\epsilon (0,x;u_0,g)=u_0(x)$.

 Next, if $t_1=\infty$, by Lemma \ref{semi-wave-lm1-1},
 the lemma is proved. Otherwise, note that
 $$
 u_x(t,x;u_0,g)=u_x(t,0;u_0,g)+\int_{0}^x u_{xx}(t,y;u_0,g)dy\ge u_x(t,0;u_0,g)-K x.
 $$
 Hence for $0<x<\frac{\epsilon}{2K}$,
 $$
 u_x(t_1,x;u_0,g)\ge \frac{\epsilon}{2}
 $$
 and
 $$
 u(t_1,\frac{\epsilon}{2K};u_0,g)\ge \frac{\epsilon^2}{4 K}.
 $$
 We then have that
 $$
 u_x(t_1,x_1;u_0,g)>u_0^{'}(x_0)
 $$
 for any $x_0,x_1\ge 0$ such that $u(t_1,x_1;u_0,g)=u_0(x_0)$.  By Lemma  \ref{zero-number-lm-cor},
 we have
 \begin{equation*}
 u(t+t_1,x;u_0,g)\ge u(t,x;u_0,g\cdot t_1)\quad {\rm for}\quad t\ge 0.
 \end{equation*}

 Similarly, let
 $$
 t_2=\sup\{\tau\,|\, u_x(t,0;u_0,g\cdot t_1)<\epsilon\quad \forall\,\, t\in [0,\tau)\}.
 $$
 Then\begin{equation}
 \label{aux-semi-wave-eq3}
u(t+t_1,x;u_0,g)\ge u(t,x;u_0,g\cdot t_1)\ge u_\epsilon(t,x;u_0,g\cdot t_1)\quad {\rm for}\quad 0\le t < t_2
 \end{equation}
  and in the case $t_2<\infty$,
 \begin{equation*}
 u(t+t_1+t_2,x;u_0,g)\ge u(t+t_2,x;u_0,g\cdot t_1)\ge u(t,x;u_0,g\cdot(t_1+t_2))\quad {\rm for}\quad t\ge 0.
 \end{equation*}

 Repeating the above process, if $t_{n-1}<\infty$,  let
 $$
 t_n=\sup\{\tau\,|\, u_x(t,0;u_0,g\cdot(t_1+\cdots+t_{n-1}))<\epsilon\quad {\rm for} \,\, t\in [0,\tau)\},
  $$
  $n=1,2,\cdots$.
 Then
 \begin{align}
 \label{aux-semi-wave-eq4}
 u(t+t_1+\cdots+t_{n-1},x;u_0,g)&\ge
u(t,x;u_0,g\cdot(t_1+\cdots+t_{n-1}))\nonumber \\
&\ge u_\epsilon (t,x;u_0,g\cdot(t_1+\cdots +t_{n-1}))
\quad {\rm for}\quad 0\le t < t_n
\end{align}
and in the case $t_n<\infty$,
  \begin{equation*}
   u(t+t_1+\cdots+t_n,x;u_0,g)\ge u(t,x;u_0,g\cdot (t_1+\cdots +t_n))\quad {\rm for}\quad t\ge 0.
  \end{equation*}

  It is not difficult to see that
  $\inf_{n\ge 1}t_n>0$.
  Then by \eqref{aux-semi-wave-eq2}-\eqref{aux-semi-wave-eq4}  and Lemma \ref{semi-wave-lm1-1} again,
  $
  \inf_{t\ge 0}u_x(t,0;u_0,g)>0$.
\end{proof}

\begin{lemma}
\label{semi-wave-lm2}
For any $\epsilon>0$, there are $T^*>0$ and $x^*>0$ such that
$$
|u(t,x;u_0,g)-u_g(t)|<\epsilon
$$
for $t\ge T^*$ and $x\ge x^*$, where $u_0$ is as in Lemma \ref{semi-wave-lm1}
with $u_0(x)\le u_g(0)$.
\end{lemma}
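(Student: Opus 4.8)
The plan is to squeeze $u(t,x;u_0,g)$ (the solution of \eqref{aux-main-eq3-hull}) between $u_g(t)$ and $u_g(t)-\epsilon$. The upper bound $u(t,x;u_0,g)\le u_g(t)$ is immediate: since $u_0(x)\le u_g(0)$ and the spatially constant function $u_g(t)$ solves the PDE in \eqref{aux-main-eq3-hull} (its ``$u_x(t,0)$''-term vanishes) with $u_g(t)>0=u(t,0;u_0,g)$, one treats $\mu u_x(t,0;u_0,g)$ as a given bounded coefficient and applies the maximum principle to the resulting linear equation for $u_g(t)-u(t,x;u_0,g)$, obtaining $u(t,x;u_0,g)\le u_g(t)$ for all $t\ge 0$, $x\ge 0$.

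For the lower bound I first reduce to a single value of $x$. By Lemma \ref{monotonicity-for-semi-wave-lm}, $u_x(t,x;u_0,g)>0$ for $t>0$, $x\ge 0$, so $u(t,x;u_0,g)\ge u(t,x^{*};u_0,g)$ whenever $x\ge x^{*}$; it therefore suffices to produce $x^{*}>0$ and $T^{*}>0$ with $u(t,x^{*};u_0,g)\ge u_g(t)-\epsilon$ for $t\ge T^{*}$. Next I establish uniform positivity away from $x=0$: there is $\delta_1>0$ with $u(t,1;u_0,g)\ge\delta_1$ for all $t\ge 0$ and all $g\in H(f)$. This follows from the iteration of the proof of Lemma \ref{semi-wave-lm1} together with Lemma \ref{semi-wave-lm1-1}: on each window produced there one has $u(t,\cdot;u_0,g)\ge u_{\epsilon}(\,\cdot\,,\cdot;u_0,g\cdot s)$ for a fixed small $\epsilon$, the window lengths are bounded below, and $\inf_{\tau\ge 0,\,g'\in H(f)}u_{\epsilon}(\tau,1;u_0,g')>0$ because $u_{\epsilon}(\tau,\cdot;u_0,g')$ converges uniformly to the positive stable almost periodic solution of the half-line Dirichlet problem furnished in the proof of Lemma \ref{semi-wave-lm1-1}.

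The heart of the argument is a comparison on $[1,\infty)$ with a constant-advection problem. Set $C_0:=\mu\sup_{t\ge 0,\,g\in H(f)}u_x(t,0;u_0,g)$, finite by the a priori estimates of Section 4. Because $u_x(t,x;u_0,g)\ge 0$ and $\mu u_x(t,0;u_0,g)\le C_0$, the identity $u_t-\bigl(u_{xx}-C_0u_x+ug(t,u)\bigr)=\bigl(C_0-\mu u_x(t,0;u_0,g)\bigr)u_x\ge 0$ shows that $u(t,x;u_0,g)$ is a supersolution on $x>1$ of
\[
w_t=w_{xx}-C_0 w_x+wg(t,w),\qquad x>1,\qquad w(t,1)=\delta_1 ,
\]
and, starting $w$ at a fixed time $T_1$ from initial datum $\delta_1$ on $[1,\infty)$ (which lies below $u(T_1,\cdot;u_0,g)$), the comparison principle gives $u(t,x;u_0,g)\ge w(t,x)$ for $x\ge 1$, $t\ge T_1$. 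Having a strictly positive Dirichlet datum, this half-line problem possesses—by the monotone almost periodic dynamics of Section 3 (cf.\ Lemma \ref{part-metric-lm})—a unique globally attracting almost periodic positive solution $W^{*}(t,x)$, and $\lim_{x\to\infty}W^{*}(t,x)=u_g(t)$ uniformly in $t$: a bounded positive solution cannot tend to $0$ as $x\to\infty$, since the rightward drift $-C_0 W_x$ leaves no decaying mode for the linearization of the equation at $u=0$ while $\lim_{t\to\infty}\frac1t\int_0^t g(s,0)\,ds>0$, so $u_g(t)$ is the only possibility. Hence there are $x^{*}>1$ and $T^{*}\ge T_1$ with $w(t,x^{*})\ge u_g(t)-\epsilon$ for $t\ge T^{*}$, so $u(t,x^{*};u_0,g)\ge u_g(t)-\epsilon$ there; monotonicity in $x$ extends this to all $x\ge x^{*}$, and together with the upper bound this proves the lemma (running the argument with $\epsilon/2$ gives the strict inequality).

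The step I expect to be the main obstacle is the comparison step and, within it, the auxiliary half-line problem: checking that $u$ is a genuine supersolution (this is exactly where $u_x\ge 0$, i.e.\ Lemma \ref{monotonicity-for-semi-wave-lm}, and the boundedness of $\mu u_x(t,0;u_0,g)$ are used) and, more delicately, proving existence, uniqueness, and the spatial asymptotics $W^{*}(t,x)\to u_g(t)$ uniformly in $t$. If one insists on a homogeneous Dirichlet condition at $x=1$ instead, one must compare with the $C_0$-advection problem with $w(t,1)=0$, but that forces the subcriticality bound $C_0<2\sqrt{\,\overline{g(\cdot,0)}\,}$ guaranteeing $\tilde\lambda(g(\cdot,0),C_0,l)>0$ for large $l$, and hence the additional task of bounding $\sup_t u_x(t,0;u_0,g)$ below the critical speed; using the positive Dirichlet datum $\delta_1$ is what avoids this.
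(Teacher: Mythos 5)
Your approach is genuinely different from the paper's. The paper does not introduce any auxiliary constant-advection boundary-value problem; instead it argues by contradiction via translation and compactness. Supposing the claim fails along sequences $t_n\ge T^*$ and $x_n\ge n$, it sets $u_n(t,x)=u(t-T^*+t_n,\,x+x_n;u_0,g)$, extracts limits $g\cdot(t_n-T^*)\to\tilde g$, $u_n\to\tilde u$, $u_x(t-T^*+t_n,0;u_0,g)\to\tilde\xi(t)$, and observes that $\tilde u$ then solves a whole-line equation $u_t=u_{xx}-\mu\tilde\xi(t)u_x+u\tilde g(t,u)$ with $\inf\tilde u\ge u_{\inf}:=\inf_{t\ge 0,\,x\ge 1}u(t,x;u_0,g)>0$. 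Since the spatially constant ODE solution $u(t;u_{\inf},\tilde g)$ also solves this PDE (the $u_x$-term vanishes), the comparison principle gives $\tilde u(t,\cdot)\ge u(t;u_{\inf},\tilde g)$, which together with $u(t,x;u_0,g)\le u_g(t)$ and the uniform attractivity in the ODE (Lemma \ref{ode-lm}) produces the contradiction. No auxiliary almost periodic solution is ever constructed.

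The gap in your proposal is the auxiliary half-line problem on $[1,\infty)$ with constant drift $C_0$ and Dirichlet datum $\delta_1$. You assert that ``by the monotone almost periodic dynamics of Section 3 (cf.\ Lemma \ref{part-metric-lm})'' it possesses a unique globally attracting almost periodic positive solution $W^*$, and that $\lim_{x\to\infty}W^*(t,x)=u_g(t)$ uniformly in $t$. Neither claim is covered by the machinery you cite: Lemma \ref{part-metric-lm} and the surrounding framework are for $\hat X^{++}$ with homogeneous Dirichlet data at $x=0$, not inhomogeneous data $\delta_1$ at $x=1$, and the part-metric contraction there is driven by the coincidence of the limits at $x=\infty$, which is precisely what one is trying to prove for $W^*$. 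The heuristic ``the rightward drift $-C_0W_x$ leaves no decaying mode for the linearization'' does not establish $W^*(t,x)\to u_g(t)$; at best it could be turned into a rigorous argument (e.g.\ translate $x\to\infty$, extract a bounded entire solution on $\RR$ with positive infimum, invoke uniqueness of such solutions for the whole-line KPP equation after removing the drift by a Galilean change of variables), but then one is essentially re-doing, with more moving parts, the translation-and-compactness argument that the paper uses in the first place. Until those two claims about $W^*$ are proved, the proposal does not close; and once you supply that proof you will find it is as hard as, and structurally parallel to, the lemma itself. The earlier parts of your proposal (the upper bound $u\le u_g$, spatial monotonicity reducing the claim to a single $x^*$, and the uniform lower bound $u(t,1;u_0,g)\ge\delta_1$ via Lemmas \ref{semi-wave-lm1} and \ref{semi-wave-lm1-1}, and the supersolution computation using $u_x\ge 0$) are sound.
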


\begin{proof}
First, note  that $u_{\inf}:=\inf_{t\ge 0,x\ge 1}u(t,x;u_0,g)>0$ and
$u_\infty:=\lim_{x\to\infty}u_0(x)>0$.
We then have
$$
\lim_{x\to\infty}u(t,x;u_0,g)=u(t;u_\infty,g)
$$
where $u(t;u_\infty,g)$ is the solution of \eqref{ode-eq1-hull} with
$u(0;u_\infty,g)=u_\infty$. Also note that, by Lemma \ref{ode-lm},
 for any $\epsilon>0$, any $\tilde g\in H(f)$, there is $T^*>0$ such that
 $$
 |u(t;u_{\inf},\tilde g)-u_{\tilde g}(t)|<\epsilon/4
 $$
 for $t\ge T^*$.

 We claim that there is $x^*\ge 1$ such that
 $$
 |u(t,x;u_0,g)-u_g(t)|<\epsilon
 $$
 for $t\ge T^*$ and $x\ge x^*$. In fact, assume this is not true,
 then for any $n\ge 1$, there are $x_n\ge n$ and $t_n\ge T^*$ such that
 $$
 |u(t_n,x_n;u_0,g)-u_{g}(t_n)|\ge \epsilon.
 $$
 Let
 $$
 u_n(t,x)=u(t-T^*+t_n,x+x_n;u_0,g).
 $$
 Without loss of generality, assume that
 $$
 g\cdot (t_n-T^*)\to \tilde g, \quad u_n(t,x)\to \tilde u(t,x),\quad u_x(t-T^*+t_n,0;u_0,g)\to \tilde \xi(t)
 $$
 as $n\to\infty$. Then $\inf_{t\ge 0, x\in\RR} \tilde u(t,x)\ge u_{\inf}$ and
 $\tilde u(t,x)$ satisfies
 of
 \begin{equation}
 \label{new-aux-eq1}
  u_t=u_{xx}-\mu \tilde\xi(t) u_x+u  \tilde g(t, u),\quad x\in\RR,\,\, t\ge 0.
 \end{equation}
 Note that $u(t;u_{\inf},\tilde g)$ is also the solution of
 \eqref{new-aux-eq1} with $u(0;u_{\inf},\tilde g)=u_{\inf}$. By comparison principle
 for parabolic equations, we have
 $$
 \tilde u(t,x)\ge u(t;u_{\inf},\tilde g)> u_{\tilde g}(t)-\epsilon/4
 $$
 for $t\ge T^*$ and any $x\in\RR$. Then for $n\gg 1$,
 \begin{align*}
 u(t_n,x_n;u_0,g)&=u_n(T^*,0)&\\
 &\ge \tilde u(T^*,0)-\epsilon/4\\
 &>u_{\tilde g}(T^*)-\epsilon/2\\
 &>u_{g\cdot (t_n-T^*)}(T^*)-\epsilon\\
 &=u_g(t_n)-\epsilon.
 \end{align*}
 Note that
 $$
 u(t,x;u_0,g)\le u_g(t)\quad \forall\,\, t\ge 0,\,\, x\ge 0.
 $$
 We then have
 $$
 |u(t_n,x_n;u_0,g)-u_g(t_n)|<\epsilon.
 $$
 This is a contradiction. The claim is then true and the lemma follows.
\end{proof}

\begin{corollary}
\label{semi-wave-cor}
For any $\tilde g\in H(f)$, let $t_n\to\infty$ be such that $g\cdot t_n\to \tilde g$ and
$u(t_n,\cdot;u_0,g)\to \tilde u_{\tilde g}$. Then
$u^{**}(t,x;\tilde g)=u(t,x;\tilde u_{\tilde g},\tilde g)$ is an entire positive solution of \eqref{aux-main-eq3-hull} with
$g$ being replaced by $\tilde g$ and $\lim_{x\to\infty}u^{**}(t,x;\tilde g)=u_{\tilde g}(t)$ uniformly in $t\in\RR$.
\end{corollary}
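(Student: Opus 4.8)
The plan is to extract a convergent subsequence of the time-translated solutions constructed in Lemma \ref{semi-wave-lm1} and verify that the limit is an entire solution with the right asymptotics. First I would note that by Lemma \ref{semi-wave-lm1} we have a fixed $u_0\in\hat X^{++}$ with $u_0^{'}(x)\ge 0$, $u_0^{'}(0)>0$, $u_0(x)\le u_g(0)$, and $\inf_{t\ge 0}u_x(t,0;u_0,g)>0$; together with the a priori bounds $u(t,\cdot;u_0,g)\le M(u_0)$ and $|u_x(t,x;u_0,g)|\le M(u_0)$ established at the start of Section 4, and parabolic interior estimates, the family $\{u(t_n+\cdot,\cdot;u_0,g)\}_n$ is precompact in $C^{1,2}_{\rm loc}$. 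Passing to a subsequence, $g\cdot t_n\to\tilde g$ in $H(f)$ and $u(t_n,\cdot;u_0,g)\to\tilde u_{\tilde g}(\cdot)$ in (say) $C^1_{\rm loc}$, and along this subsequence $u(t_n+t,x;u_0,g)=u(t,x;u(t_n,\cdot;u_0,g),g\cdot t_n)\to u(t,x;\tilde u_{\tilde g},\tilde g)=:u^{**}(t,x;\tilde g)$ by continuous dependence on initial data and on the hull parameter (the Remark after Proposition \ref{global-existence}). Since solutions of \eqref{aux-main-eq3-hull} for the given $u_0$ exist for all $t\ge0$ and the convergence is along $t_n\to\infty$, one can pass to the limit on intervals $[-T,T]$ for every $T>0$; this produces an entire solution, i.e. $u^{**}(t,x;\tilde g)$ solves \eqref{aux-main-eq3-hull} with $g$ replaced by $\tilde g$ for all $t\in\RR$. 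Positivity: $\inf_{t\ge0}u_x(t,0;u_0,g)>0$ implies $\p_x u^{**}(t,0;\tilde g)>0$ for all $t$, and then Lemma \ref{monotonicity-for-semi-wave-lm} (applied to the entire orbit) gives $u_x^{**}(t,x;\tilde g)>0$, so $u^{**}>0$ for $x>0$.

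The remaining point — and the one requiring the most care — is the uniform-in-$t$ asymptotics $\lim_{x\to\infty}u^{**}(t,x;\tilde g)=u_{\tilde g}(t)$. For this I would use Lemma \ref{semi-wave-lm2}: for every $\e>0$ there are $T^*,x^*$ with $|u(t,x;u_0,g)-u_g(t)|<\e$ for all $t\ge T^*$, $x\ge x^*$. Given $t\in\RR$, choose $n$ large so that $t_n+t\ge T^*$; then for $x\ge x^*$,
$$
|u(t_n+t,x;u_0,g)-u_g(t_n+t)|<\e.
$$
Since $u_g(t_n+t)=u_{g\cdot t_n}(t)\to u_{\tilde g}(t)$ (continuity of $g\mapsto u_g$ on $H(f)$, which follows from the stability/uniqueness in Lemma \ref{ode-lm}) and $u(t_n+t,x;u_0,g)\to u^{**}(t,x;\tilde g)$, letting $n\to\infty$ gives $|u^{**}(t,x;\tilde g)-u_{\tilde g}(t)|\le\e$ for all $x\ge x^*$; crucially $x^*$ depends only on $\e$, not on $t$, so the convergence $u^{**}(t,x;\tilde g)\to u_{\tilde g}(t)$ as $x\to\infty$ is uniform in $t\in\RR$. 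In particular $u^{**}$ is bounded below away from $0$ for $x\ge1$ and is a genuine entire positive solution.

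The main obstacle I anticipate is ensuring the compactness and limit-passage are legitimate on the whole real line in $t$ rather than just on $[0,\infty)$: one must know $u(t_n+t,\cdot;u_0,g)$ makes sense and is uniformly bounded for $t\in[-T,0]$ once $n$ is large enough that $t_n>T$, which is immediate here since the orbit is defined and bounded for all $t\ge0$. A secondary technical nuisance is upgrading $C^1_{\rm loc}$ convergence of the initial data to the convergence of solutions needed to identify the limit as $u(t,x;\tilde u_{\tilde g},\tilde g)$ — this is exactly the continuous-dependence statement recorded in the Remark following Proposition \ref{global-existence}, combined with standard parabolic smoothing, so no new ideas are needed. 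Everything else is a routine diagonal-subsequence argument.
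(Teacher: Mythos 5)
Your proposal is correct and is essentially the argument the paper intends: the paper's proof of Corollary \ref{semi-wave-cor} consists of the single line ``It follows from Lemmas \ref{semi-wave-lm1} and \ref{semi-wave-lm2} directly,'' and your write-up simply spells out the limit-passage along $t_n$ (using continuous dependence, the a priori bounds, and parabolic smoothing) together with the $t$-uniform $(T^*,x^*)$ estimate from Lemma \ref{semi-wave-lm2}, which is precisely what makes the conclusion ``direct.'' The only cosmetic mismatch is that the corollary's hypotheses already supply a sequence $t_n$ along which $g\cdot t_n\to\tilde g$ and $u(t_n,\cdot;u_0,g)\to\tilde u_{\tilde g}$, so your opening ``extract a convergent subsequence'' is unnecessary — one works with the given sequence directly.
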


\begin{proof}
It follows from Lemmas \ref{semi-wave-lm1} and \ref{semi-wave-lm2} directly.
\end{proof}

Let $\tilde u_{0,g\cdot t}(x)=u_{0,g\cdot t}(-x)$ for any $x\in\RR^+$.
Observe that, for any given $g\in H(f)$ and $T_2>T_1>t>0$, we have
$$
u(T_2+t,x;\tilde u_{0,g\cdot (-T_2)},g\cdot(-T_2))=u(T_1+t,x;u(T_2-T_1,\cdot;\tilde u_{0,g\cdot(-T_2)},g\cdot(-T_2)),g\cdot(-T_1)).
$$
Then by Lemma \ref{zero-number-lm-cor},
$$
u(T_2+t,x;\tilde u_{0,g\cdot(-T_2)},g\cdot(-T_2))\le u(T_1+t,x;\tilde u_{0,g\cdot (-T_1)},g\cdot (-T_1)).
$$
Let
$$
U^*(t,x;g)=\lim_{T\to\infty}u(t+T,x;\tilde u_{0,g\cdot(-T)},g\cdot(-T)).
$$
Then $U^*(t,x;g)$ is an entire solution of \eqref{aux-main-eq3-hull},

\begin{lemma}
\label{semi-wave-lm3}
For any entire positive solution $v(t,x)$ of \eqref{aux-main-eq3-hull} with $v(t,x)<u_g(t)$,
$$
v(t,x)\le U^*(t,x;g).
$$
Moreover,  $\lim_{x\to\infty}U^*(t,x;g)=u_g(t)$ uniformly in $t\in\RR$ and $g\in H(f)$.
\end{lemma}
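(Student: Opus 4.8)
The plan is to prove the two assertions separately, using the comparison lemmas already available (Lemma~\ref{zero-number-lm-cor}, Lemma~\ref{semi-wave-lm2}), the time-translation covariance of \eqref{aux-main-eq3-hull}, and the fact that $U^{*}$ is built as a limit of a family that is eventually decreasing in $T$. \emph{For the inequality $v\le U^{*}$:} fix an entire positive solution $v(t,x)$ of \eqref{aux-main-eq3-hull} with $v(t,x)<u_g(t)$. For each $T>0$ the function $(t,x)\mapsto v(t-T,x)$ solves \eqref{aux-main-eq3-hull} with $g$ replaced by $g\cdot(-T)$ and has initial value $v(-T,\cdot)\in\hat X^{+}$, so by uniqueness it equals $u\bigl(t,x;v(-T,\cdot),g\cdot(-T)\bigr)$. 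Since $v(-T,x)<u_g(-T)=u_{g\cdot(-T)}(0)$, Lemma~\ref{zero-number-lm-cor}(2), applied with $g$ replaced by $g\cdot(-T)$ and $u_0=v(-T,\cdot)$, yields
$$
u\bigl(t,x;\tilde u_{0,g\cdot(-T)},g\cdot(-T)\bigr)\ \ge\ u\bigl(t,x;v(-T,\cdot),g\cdot(-T)\bigr)\ =\ v(t-T,x),\qquad x\ge 0,\ t\ge 0.
$$
Replacing $t$ by $t+T$ and letting $T\to\infty$ along the family defining $U^{*}$ gives $U^{*}(t,x;g)\ge v(t,x)$ for all $t,x$.

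\emph{For the limit:} the upper bound $U^{*}(t,x;g)\le u_g(t)$ follows because, for each $g'\in H(f)$, the spatially constant function $\bar u(t):=u_{g'}(t)$ is a supersolution of \eqref{aux-main-eq3-hull} (with $g$ replaced by $g'$) that dominates the step datum $\tilde u_{0,g'}$; applying the maximum principle to the linear equation solved by $\bar u-u(\cdot,\cdot;\tilde u_{0,g'},g')$ gives $u(t,x;\tilde u_{0,g'},g')\le u_{g'}(t)$, and we take $g'=g\cdot(-T)$, $T\to\infty$. For the matching lower bound, fix $\e>0$ and choose $u_0$ as in Lemma~\ref{semi-wave-lm1} with $\norminfty{u_0}<\inf_{g'\in H(f),\,s\in\RR}u_{g'}(s)$ (this infimum is positive). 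By Lemma~\ref{zero-number-lm-cor}(2) we have $u(s,x;\tilde u_{0,g'},g')\ge u(s,x;u_0,g')$ for every $g'$; by the version of Lemma~\ref{semi-wave-lm2} that is uniform with respect to the base point, there are $T^{*},x^{*}>0$ with $u(s,x;u_0,g')\ge u_{g'}(s)-\e$ for all $s\ge T^{*}$, $x\ge x^{*}$, $g'\in H(f)$. Taking $g'=g\cdot(-T)$, $s=t+T$ with $T$ large, and letting $T\to\infty$, we obtain $U^{*}(t,x;g)\ge u_g(t)-\e$ for $x\ge x^{*}$, uniformly in $t\in\RR$ and $g\in H(f)$. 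Together with the upper bound, this is the assertion.

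The comparison arguments and the time-shift bookkeeping are routine. The main obstacle is the uniformity in $g\in H(f)$: it requires a form of Lemma~\ref{semi-wave-lm2} (and of the ODE stability in Lemma~\ref{ode-lm}) that is uniform over base points. I would establish this by the standard compactness/contradiction scheme: if the estimate failed uniformly there would be sequences $s_n\to\infty$, $x_n\to\infty$, $g_n\in H(f)$ along which it breaks, and passing to a limit with $g_n\cdot(s_n-T^{*})\to\hat g$ and extracting a limiting entire solution contradicts the uniform bound $\inf_{s\ge 0,\,x\ge 1,\,g'\in H(f)}u(s,x;u_0,g')>0$ via the strong maximum principle, exactly as in the proof of Lemma~\ref{semi-wave-lm2}. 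Alternatively, in the second step one may replace Lemma~\ref{semi-wave-lm2} by Corollary~\ref{semi-wave-cor}: its entire solution $u^{**}(\cdot,\cdot;g)$ satisfies $u^{**}<u_g$ by the strong maximum principle, hence $u^{**}(t,x;g)\le U^{*}(t,x;g)\le u_g(t)$ by the first step, and one squeezes using $\lim_{x\to\infty}u^{**}(t,x;g)=u_g(t)$.
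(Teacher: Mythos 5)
Your argument is correct and follows the paper's route for the first assertion essentially verbatim: write $v(t,x)=u\bigl(t+T,x;v(-T,\cdot),g\cdot(-T)\bigr)$ by uniqueness, invoke Lemma~\ref{zero-number-lm-cor}(2) to compare against the step datum $\tilde u_{0,g\cdot(-T)}$, and pass to the limit $T\to\infty$ along the decreasing family defining $U^{*}$.

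For the second assertion the paper's own proof is exactly your ``alternative'': it takes $u^{**}(\cdot,\cdot;g)$ from Corollary~\ref{semi-wave-cor}, observes $u^{**}(t,x;g)\le U^{*}(t,x;g)\le u_g(t)$, and squeezes using $u_g(t)-u^{**}(t,x;g)\to 0$ as $x\to\infty$. Your primary route — push $u_0$ from Lemma~\ref{semi-wave-lm1} below the step datum by Lemma~\ref{zero-number-lm-cor}(2) and then use Lemma~\ref{semi-wave-lm2} at $s=t+T$ with $g'=g\cdot(-T)$ — is logically equivalent but avoids introducing the intermediate entire solution $u^{**}$; it makes the dependence on $T$ and the shift bookkeeping more explicit. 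Both routes, as you point out, quietly rely on Lemma~\ref{semi-wave-lm2} (and hence the ODE attraction in Lemma~\ref{ode-lm}) holding uniformly over $g'\in H(f)$, not just for the one fixed $g$ for which it is stated. The paper does not flag this, while you identify it and sketch the standard compactness/contradiction fix, which is correct since $H(f)$ is compact and the contradiction argument in the proof of Lemma~\ref{semi-wave-lm2} tolerates an extra varying sequence $g_n\in H(f)$. On balance your write-up is slightly more careful than the paper's on this point; otherwise the two proofs coincide.
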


\begin{proof}
First, let $v(t,x)$ be an entire positive solution of \eqref{aux-main-eq3-hull}. By Lemma \ref{zero-number-lm-cor},
$$
v(t,x)=u(t+T,x;v(-T,\cdot),g\cdot(-T))\le u(t+T,x;\tilde u_{0,g\cdot(-T)},g\cdot (-T))
$$
for any $t\in\RR$, $t+T>0$, and $x\ge 0$. Letting $T\to\infty$, we have
$$
v(t,x)\le U^*(t,x;g)\quad \forall\,\, x\ge 0.
$$

Next, let $u^{**}(t,x;g)$ be the entire solution in Corollary \ref{semi-wave-cor}. By the above arguments,
$$
u^{**}(t,x;g)\le U^*(t,x;g).
$$
Note that $U^*(t,x;g)\le u_g(t)$. Then
$$
0\le u_g(t)-U^*(t,x;g)\le u_g(t)-u^{**}(t,x;g)\to 0
$$ as $x\to\infty$ uniformly in $t\in\RR$ and $g\in H(f)$.
\end{proof}

\begin{proof} [Proof of Theorem \ref{main-thm1}]
Let $\tilde u^{**}(t,x)=U^*(t,x;f)$, we only need to prove $U^*(t,x;g)$ satisfies the properties
in Theorem \ref{main-thm1} for any $g\in H(f)$. Theorem \ref{main-thm1} then follows.

(1) It suffices to prove that $U^*(t,x;g)$ is almost periodic in $t$.

Note that $U^*(t,x;g)=U^*(0,x;g\cdot t)$ for any $t\in\RR$ and $g\in H(f)$.
We claim that $g\in H(f)\mapsto U^*(0,\cdot;g)\in \hat X^{++}$ is continuous.
Assume that there is $g_n\in H(f)$ such that $g_n\to g^*$ and
$$
U^*(0,\cdot;g_n)\to \tilde U^*(\cdot)\not = U^*(0,\cdot;g^*).
$$
Then $u(t,x;\tilde U^*,g^*)$ is an entire solution and
$$
U^*(t,x;g^*)\ge u(t,x;\tilde U^*,g^*).
$$
Note that $\rho(U^*(t,\cdot;g^*),u(t,\cdot;\tilde U^*,g^*))$ is nonincreasing in $t$.
Let
$$
\rho_{-\infty}=\lim_{t\to -\infty} \rho(U^*(t,\cdot;g^*),u(t,\cdot;\tilde U^*,g^*)).
$$
Then $\rho_{-\infty}\not =0$.
Take a sequence $s_n\to -\infty$ such that $g^*\cdot s_n\to g^{**}$,
$U^*(s_n,\cdot;g^*)\to U^{**}(\cdot)$, and $u(s_n,\cdot;\tilde U^*,g^*)\to \tilde U^{**}(\cdot)$.
Then
$$
u(t,x;U^{**},g^{**})=\lim_{n\to\infty} U^*(t+s_n,x;g^*)
$$
and
$$
u(t,x;\tilde U^{**},g^{**})=\lim_{n\to\infty} u(t+s_n,x;\tilde U^*,g^*).
$$
Hence
$$
u(t,x;U^{**},g^{**})\ge u(t,x;\tilde U^{**},g^{**}).
$$
Hence $\rho(u(t,\cdot;U^{**},g^{**}),u(t,\cdot;\tilde U^{**},g^{**}))$ is well defined and
$$
\rho(u(t,\cdot;U^{**},g^{**}),u(t,\cdot;\tilde U^{**},g^{**}))=\rho_{-\infty}
$$ for all $t\in\RR$. This implies that $u(t,\cdot;U^{**},g^{**})=u(t,\cdot;\tilde U^{**},g^{**})$
and $\rho_{-\infty}=0$, which is a contradiction. Therefore,
$g\in H(f)\mapsto U^*(0,\cdot;g)\in\hat X^{++}$ is continuous. We then have $U^*(t,\cdot;g)=U^*(0,\cdot;g\cdot t)$ is almost periodic in $t$.

(2) Suppose that $u^{**}(t,x;g)$ is also an almost periodic positive solution
of \eqref{aux-main-eq3-hull} and
$$\lim_{x\to\infty}u^{**}(t,x;g)=u_g(t)$$
 uniformly in $t\in\RR$.
Then  by Lemma \ref{semi-wave-lm3},
$$
U^*(t,x;g)\ge u^{**}(t,x;g).
$$
By the almost periodicity, there is $t_n\to\infty$ such that $g\cdot t_n\to g$ and
$$
U^*(t_n,x;g)\to U^*(0,x;g),\quad u^{**}(t_n,x;g)\to u^{**}(0,x;g)
$$
as $n\to\infty$ uniformly in $x\ge 0$. It then follows that
$$
\rho(u^{**}(t,\cdot;g),U^*(t,\cdot;g))={\rm constant}
$$
and then we must have $u^{**}(t,x;g)\equiv U^{*}(t,x;g)$.

(3)
For any bounded positive solution $u(t,x)$ of \eqref{aux-main-eq3-hull} with $\liminf_{x\to\infty}\inf_{t\ge 0}u(t,x)>0$,  suppose that
$$
\lim_{t\to\infty}[U^*(t,x;g)-u(t,x)]\neq0
$$
then there exist $t_n\to\infty$, $u^*\in\hat X^{++}$, such that
$g\cdot t_n\to g^*$, $U^*(t_n,x;g)\to U^*(0,x;g^*)$, $u(t_n,x)\to u^*(x)$
and $U^*(0,\cdot;g^*)\neq u^*(\cdot)$.
Note that $U^*(t,\cdot;g^*)$ and $u(t,\cdot;u^*,g^*)$ exists for all $t\in\RR$,
and by Lemma \ref{semi-wave-lm3} we have
$$
U^*(t,x;g^*)\ge u(t,x;u^*,g^*)
$$
Then $\rho(U^*(t,\cdot;g^*),u(t,\cdot;u^*,g^*))$ is well defined and decreases as $t$ increases.
Let
$$
\rho_{-\infty}=\lim_{t\to-\infty}\rho(U^*(t,\cdot;g^*),u(t,\cdot;u^*,g^*))
$$
Then $\rho_{-\infty}>0$. By the same arguments in (1), we can get $\rho_{-\infty}=0$, which is a
contradiction. Therefore
$$
\lim_{t\to\infty}[U^*(t,x;g)-u(t,x)]=0
$$
uniformly in $x\ge0$.

\end{proof}

\section{Spreading Speeds in Diffusive KPP Equations with Free Boundary and Proof of Theorem \ref{main-thm2}}

In this section, we consider spreading speeds in spatially homogeneous diffusive KPP equations
with free boundary and prove Theorem \ref{main-thm2}.

\begin{proof}[Proof of Theorem \ref{main-thm2}]
 We divide the proof into four steps. We put $u(t,x)=u(t,x;u_0,h_0)$ and
 $h(t)=h(t;u_0,h_0)$ if no confusion occurs.

\textbf{Step 1.} We prove that
the unique positive almost periodic solution $V^*(t)$ of
the problem \eqref{ode-eq1} satisfies
$$\underline{V}_\epsilon(t)\leq V^*(t)\leq\bar{V}_\epsilon(t)
$$
where $\bar{V}_\epsilon(t)$ and $\underline{V}_\epsilon(t)$ are,
respectively, the unique positive almost periodic solution of
\begin{equation}
\label{ordinary-equa1}
V_t=V(f(t,V)+\epsilon)
\end{equation}
and
\begin{equation}
\label{ordinary-equa2}
V_t=V(f(t,V)-\epsilon),
\end{equation}
and $0<\epsilon\ll 1$.

Obviously, $\bar{V}_\epsilon$ and $\underline{V}_\epsilon$
are, respectively, the supersolution and subsolution of
\eqref{ode-eq1}. Hence, by the comparison principle and uniqueness and stability of almost periodic positive solutions
of \eqref{ode-eq1},
we have
$$\underline{V}_\epsilon(t)\leq V^*(t)\leq\bar{V}_\epsilon(t).
$$

Furthermore, for any $0<\epsilon\ll 1$, consider the following two problems
\begin{equation}
\label{semi-wave-eq4}
\begin{cases}
v_t=v_{xx}-\mu v_x(t,0) v_x(t,x)+ v (f(t,v)+\epsilon),\quad 0<x<\infty \cr
v(t,0)=0 \quad
\end{cases}
\end{equation}
and
\begin{equation}
\label{semi-wave-eq5}
\begin{cases}
z_t=z_{xx}-\mu z_x(t,0) z_x(t,x)+ z(f(t,z)-\epsilon),\quad 0<x<\infty \cr
z(t,0)=0. \quad
\end{cases}
\end{equation}

Using the same arguments as in Theorem \ref{main-thm1}, we know that
there exist the unique positive almost periodic solution
$v_\epsilon(t,x)$ of \eqref{semi-wave-eq4} and
$z_\epsilon(t,x)$ of \eqref{semi-wave-eq5} such that
$$\lim_{x\to\infty}v_\epsilon(t,x)=\bar{V}_\epsilon(t) $$
and
$$\lim_{x\to\infty}z_\epsilon(t,x)=\underline{V}_\epsilon(t) $$
uniformly in $t\in\RR$.
Let $\epsilon\to 0$, we can get  $\bar{V}_\epsilon(t)$ and
$\underline{V}_\epsilon(t)$ converge to $V^*(t)$ uniformly in $t\in\RR$.

\textbf{Step 2.}
We prove
$${\overline{\lim}}_{t\to\infty}\frac{h(t)}{t}\leq c^*.$$

By Proposition \ref{main-results-of-part1},
\begin{equation}
\begin{split}
\label{6-6}
\lim_{t\to\infty}u(t,x)-V^*(t)=0
\ \ locally \ \ uniformly \ in \ x\ge 0.
\end{split}
\end{equation}
Since $h_\infty=\infty$, there exists a $T>0$ such that
$$h(T)>l^* \ and \ u(t+T,l^*)\leq\bar{V}_\epsilon(t+T)
\ \ for \ all \ t\geq0.$$

Let
$$\tilde{u}(t,x)=u(t+T,x+l^*) \ and \ \tilde{h}(t)=h(t+T)-l^*.$$
We obtain
\begin{equation*}
\begin{cases}
\tilde{u}_t=\tilde{u}_{xx}+\tilde{u}f(t+T,\tilde{u})
\quad &t>0, 0<x<\tilde{h}(t) \cr
\tilde{u}(t,0)=u(t+T,l^*),
\tilde{u}(t,\tilde{h}(t)=0 \quad &t>0 \cr
\tilde{h}^{'}(t)=-\mu\tilde{u}_x(t,\tilde{h}(t))
\quad &t>0 \cr
\tilde{u}(0,x)=u(T,x+l^*) \quad &0<x<\tilde{h}(0).
\end{cases}
\end{equation*}

Let $u^*(t)$ be the unique positive solution of the problem
\begin{equation*}
\begin{cases}
u^*_t=u^*(f(t,u^*)+\epsilon) \quad  \ t>T \cr
u^*(T)=\max\{\bar{V}_\epsilon, \|\tilde{u}(0,\cdot)\|_\infty \}.
\end{cases}
\end{equation*}
Then
$$u^*(t)\geq\bar{V}_\epsilon(t) \ \ for \ all \ t\geq T $$
and Lemma \ref{ode-lm} tells us that
$$\lim_{t\to\infty}u^*(t)-\bar{V}_\epsilon(t)=0.$$
Now we have
$$u^*(T)\geq\tilde{u}(0,x),  u^*(t+T)\geq\bar{V}_\epsilon(t+T)\geq\tilde{u}(t,0),
  u^*(t+T)\geq0=\tilde{u}(t,\tilde{h}(t)) \ \ for \ t\ge0.
$$
Hence, we can apply the comparison principle to deduce
$$\tilde{u}(t,x)\leq u^*(t+T) \ \ for \ t\geq0, 0<x<\tilde{h}(t).$$
As a consequence, there exists $\bar{T}>T$ such that
$$\tilde{u}(t,x)\leq(1-\epsilon)^{-1}\bar{V}_\epsilon(t+T)
\ \ for \ t\geq\bar{T}, 0\leq x \leq\tilde{h}(t).$$

From the Step 1, we know that there exists $L>l^*$ such that
$$v_\epsilon(t,x)>(1-\epsilon)\bar{V}_\epsilon(t)
\ \ for \ t>0, x\geq L.$$

We now define
$$\xi(t)=(1-\epsilon)^{-2}\int^t_0\mu(v_\epsilon)_x(s,0)ds+L+\tilde{h}(\bar{T})
\ \ for \ t\geq0,$$
$$w(t,x)=(1-\epsilon)^{-2}v_\epsilon(t,\xi(t)-x)
\ \ for \ t\geq0, 0\leq x \leq\xi(t).$$
Then
$$\xi^{'}(t)=(1-\epsilon)^{-2}\mu(v_\epsilon)_x(t,0),$$
$$-\mu w_x(t,\xi(t))=(1-\epsilon)^{-2}\mu(v_\epsilon)_x(t,0)$$
and so we have
$$\xi^{'}(t)=-\mu w_x(t,\xi(t)).$$
Clearly,
$$w(t,\xi(t))=0, \ \xi(T+\bar{T})\geq L+\tilde{h}(\bar{T}).$$
Moreover, for $0<x\leq\tilde{h}(\bar{T})$,
$$w(T+\bar{T},x)=(1-\epsilon)^{-2}v_\epsilon(T+\bar{T},\xi(T+\bar{T})-x)
\geq(1-\epsilon)^{-2}v_\epsilon(T+\bar{T},L)>
(1-\epsilon)^{-1}\bar{V}_\epsilon(T+\bar{T})\geq\tilde{u}(\bar{T},x)$$
and for $\tilde{h}(\bar{T})<x<\xi(0)$, $w(T+\bar{T},x)>0$.

And for $t\geq\bar{T}$, we have
$$w(t+T,0)=(1-\epsilon)^{-2}v_\epsilon(t+T,\xi(t+T))
\geq(1-\epsilon)^{-2}v_\epsilon(t+T,L)>
(1-\epsilon)^{-1}\bar{V}_\epsilon(t+T)\geq\tilde{u}(t,0).$$

Direct calculations show that, for $t\geq\bar{T}$ and
$0<x<\xi(t)$, with $\rho=\xi(t)-x$,
\begin{align*}
w_t-w_{xx}&=(1-\epsilon)^{-2}[(v_\epsilon)_t+(v_\epsilon)_\rho
\cdot\xi^{'}(t)-(v_\epsilon)_{\rho\rho}] \\
&=(1-\epsilon)^{-2}[\mu(1-\epsilon)^{-2}(v_\epsilon)_\rho(t,0)(v_\epsilon)_\rho(t,\rho)
+(v_\epsilon)_t-(v_\epsilon)_{\rho\rho}] \\
&\geq(1-\epsilon)^{-2}[\mu(v_\epsilon)_\rho(t,0)(v_\epsilon)_\rho(t,\rho)
+(v_\epsilon)_t-(v_\epsilon)_{\rho\rho}] \\
&=(1-\epsilon)^{-2}v_\epsilon(f(t,v_\epsilon)+\epsilon) \\
&\geq w(f(t,w)+\epsilon).
\end{align*}
Hence we can use Lemma \ref{com-principle free boun 2} to conclude that
$$w(t+T,x)\geq\tilde{u}(t,x) \ \ for \ t\geq\bar{T},
0<x<\tilde{h}(t)$$
$$\xi(t+T)\geq\tilde{h}(t) \ \ for \ t\geq\bar{T}.$$

It follows that
\begin{align*}
\overline{\lim}_{t\to\infty}\frac{h(t)}{t}=
\overline{\lim}_{t\to\infty}\frac{\tilde{h}(t-T)+l^*}{t}&\leq
\overline{\lim}_{t\to\infty}\frac{\xi(t)}{t} \\
&=\overline{\lim}_{t\to\infty}
\frac{(1-\epsilon)^{-2}\int^t_0\mu(v_\epsilon)_x(s,0)ds+L+\tilde{h}(\bar{T})}{t} \\
&=(1-\epsilon)^{-2}\lim_{t\to\infty}\frac{\int^t_0\mu(v_\epsilon)_x(s,0)ds}{t}.
\end{align*}

Note that 
$(v_\epsilon)_x(t,0)\to \tilde u^{**}_x(t,0)$ as $\epsilon\to 0$ uniformly in $t\in\RR$.
Thus,
\begin{equation}
\begin{split}
\label{6-7}
{\overline{\lim}}_{t\to\infty}\frac{h(t)}{t}
\leq\lim_{t\to\infty}\frac{\int^t_0\mu \tilde u^{**}_x(s,0)ds}{t}=c^*.
\end{split}
\end{equation}

\textbf{Step 3.}
We prove
$${\underline{\lim}}_{t\to\infty}\frac{h(t)}{t}\geq c^*.$$

By Lemma \ref{ode-lm} and Proposition \ref{main-results-of-part1}, we know that there
exists a unique positive almost periodic solution $v^*(t)$ of the problem
\begin{equation*}
v_t=vf(t+T,v)
\end{equation*}
and
\begin{equation}
\begin{split}
\label{6-9}
\lim_{t\to\infty}[\tilde{u}(t,x)-v^*(t)]=0
\end{split}
\end{equation}
locally uniformly in $x\ge0$. Using the comparison principle we have
\begin{equation}
\begin{split}
\label{6-10}
v^*(t) \geq\underline{V}_\epsilon(t+T).
\end{split}
\end{equation}
It then follows that $\underline{\lim}_{t\to\infty}[v^*(t)-\underline{V}_\epsilon(t+T)]\geq0$.

In view of $(\ref{6-9})$, we have
\begin{equation}
\begin{split}
\label{6-14}
\underline{\lim}_{t\to\infty}[\tilde{u}(t,x)-\underline{V}_\epsilon(t+T)]\geq0
\ \ locally \ uniformly \ in \ x\ge 0.
\end{split}
\end{equation}

By the same argument as Lemma \ref{semi-wave-lm3} we can get,
\begin{equation}
\begin{split}
\label{6-15}
z_\epsilon(t,x)\leq\underline{V}_\epsilon(t)
\ \ for \ 0<x<\infty.
\end{split}
\end{equation}

Due to the $(\ref{6-14})$ and $(\ref{6-15})$ we can find some $\tilde L>0$,
$\tilde{T}>T$, and define
$$\eta(t)=(1-\epsilon)^{2}\int^t_{\tilde{T}+T}\mu(z_\epsilon)_x(s,0)ds
+\tilde L
\ \ for \ t\geq\tilde{T}+T,$$
$$w(t,x)=(1-\epsilon)^{2}z_\epsilon(t,\eta(t)-x)
\ \ for \ t\geq\tilde{T}+T, 0\leq x \leq\eta(t)$$
such that
$$\tilde{u}(t,0)\geq w(t+T,0) \ \ for \ t\geq\tilde{T}$$
and
$$\tilde{u}(\tilde{T},x)\geq w(\tilde{T}+T,x) \ \ for \ 0\leq x \leq\eta(\tilde{T}+T).$$
Then
$$\eta^{'}(t)=(1-\epsilon)^{2}\mu(z_\epsilon)_x(t,0)$$
$$-\mu w_x(t,\eta(t))=(1-\epsilon)^{2}\mu(z_\epsilon)_x(t,0)$$
and so we have
$$\eta^{'}(t)=-\mu w_x(t,\eta(t)).$$
Clearly,
$$w(t,\eta(t))=0.$$

Direct calculations show that, for $t\geq\tilde{T}$ and
$0<x<\eta(t)$, with $\theta=\eta(t)-x$,
\begin{align*}
w_t-w_{xx}&=(1-\epsilon)^{2}[(z_\epsilon)_t+(z_\epsilon)_\theta
\cdot\eta^{'}(t)-(z_\epsilon)_{\theta\theta}] \\
&=(1-\epsilon)^{2}[\mu(1-\epsilon)^{2}(z_\epsilon)_\theta(t,0)(z_\epsilon)_\theta(t,\theta)
+(z_\epsilon)_t-(z_\epsilon)_{\theta\theta}] \\
&\leq(1-\epsilon)^{2}[\mu(z_\epsilon)_\theta(t,0)(z_\epsilon)_\theta(t,\theta)
+(z_\epsilon)_t-(z_\epsilon)_{\theta\theta}] \\
&=(1-\epsilon)^{2}z_\epsilon(f(t,z_\epsilon)-\epsilon) \\
&\leq w(f(t,w)-\epsilon).
\end{align*}
Hence we can use Lemma \ref{com-principle free boun 3} to conclude that
$$w(t+T,x)\leq\tilde{u}(t,x) \ \ for \ t\geq\tilde{T},
0<x<\eta(t+T),$$
$$\eta(t+T)\leq\tilde{h}(t) \ \ for \ t\geq\tilde{T}.$$

It follows that
\begin{align*}
\underline{\lim}_{t\to\infty}\frac{h(t)}{t}=
\underline{\lim}_{t\to\infty}\frac{\tilde{h}(t-T)+l^*}{t}&\geq
\underline{\lim}_{t\to\infty}\frac{\eta(t)}{t} \\
&=\underline{\lim}_{t\to\infty}
\frac{(1-\epsilon)^{2}\int^t_{\tilde{T}+T}\mu(z_\epsilon)_x(s,0)ds+\tilde{h}(\tilde{T})}{t} \\
&=(1-\epsilon)^{2}\lim_{t\to\infty}\frac{\int^t_0\mu(z_\epsilon)_x(s,0)ds}{t}.
\end{align*}

Note that 
$(z_\epsilon)_x(t,0)\to \tilde u^{**}_x(t,0)$ as $\epsilon\to 0$ uniformly in $t\in\RR$. Thus,
\begin{equation}
\begin{split}
\label{6-16}
{\underline{\lim}}_{t\to\infty}\frac{h(t)}{t}
\geq\lim_{t\to\infty}\frac{\int^t_0\mu \tilde u^{**}_x(s,0)ds}{t}=c^*.
\end{split}
\end{equation}

Hence, from $(\ref{6-7})$ and $(\ref{6-16})$ we have
$$\lim_{t\to\infty}\frac{h(t)}{t}=c^*.$$

{\bf Step 4.} We prove that  for any $\epsilon>0$,
$$\lim_{t\to\infty}\max_{x\leq(c^*-\epsilon)t}|u(t,x)-V^*(t)|=0.$$

 By the estimates for $\tilde{u}(t,x)$ given in Step 2 of the proof,
and for any given small
$\delta>0$, there exist $T^\delta>T$ and $R^\delta>0$ such that
$$u(t,x+l^*)\leq (1-\delta)^{-2}v_\delta(t,\xi(t)-x)
\ \ for \ t\geq T^\delta, 0\leq x\leq\tilde{h}(t)$$
where
$$\xi(t)=(1-\delta)^{-2}\int^t_0 \mu(v_\delta)_x(s,0)ds+R^\delta$$
and $v_\delta$ is the unique almost periodic solution of
\eqref{semi-wave-eq4} with $\epsilon$ replaced by $\delta$
and $\tilde{h}(t)=h(t+T)-l^*$.

Similarly, by Step 3 of the proof,
there exist $\tilde{T}^\delta, \bar{T}^\delta>T$ and $\tilde{R}^\delta$ such that
$$u(t,x+l^*)\geq(1-\delta)^2 z_\delta(t,\eta(t)-x)
\ \ for \ t\geq\tilde{T}^\delta, 0\leq x\leq\eta(t).$$
where
$$\eta(t)=(1-\delta)^2\int^t_{\bar{T}^\delta}\mu (z_\delta)_x(s,0)ds+\tilde{R}^\delta$$
and $z_\delta$ is the unique almost periodic solution of
\eqref{semi-wave-eq5} with $\epsilon$ replaced by $\delta$.

Since
$$\lim_{\delta\to0}(1-\delta)^{-2}\mu(v_\delta)_x(t,0)=
\lim_{\delta\to0}(1-\delta)^2\mu(z_\delta)_x(t,0)=\mu \tilde u^{**}_x(t,0)$$
uniformly for $t\geq0$, for any $\epsilon>0$, we can find
$\delta_\epsilon\in(0,\epsilon)$ small enough and
$T_\epsilon>0$ such that for all $t\geq T_\epsilon$, we have
$$|(1-\delta_\epsilon)^{-2}\int^t_0\mu(v_{\delta_\epsilon})_x(s,0)ds
-c^*t|<\frac{\epsilon}{2}t$$
and
$$|(1-\delta_\epsilon)^2\int^t_0\mu(z_{\delta_\epsilon})_x(s,0)ds
-c^*t|<\frac{\epsilon}{2}t$$

Let $\bar{R}^\delta=\tilde{R}^\delta-(1-\delta)^2\int^{\bar{T}^\delta}_0
\mu(z_\delta)_x(s,0)ds$. Choose $\bar{T_\epsilon}>T_\epsilon$ such that
$\bar{R}^\delta+\frac{\epsilon}{2}t>0$ for $t\geq\bar{T}_\epsilon$.
We now fix $\delta=\delta_\epsilon$ in $v_\delta$, $z_\delta$,
$\xi$ and $\eta$. Obviously, for $t\geq \bar{T}_\epsilon$,
$$\xi(t)-x\geq(c^*-\epsilon)t-x+R^{\delta_\epsilon}+\frac{\epsilon}{2}t$$
$$\eta(t)-x\geq(c^*-\epsilon)t-x+\bar{R}^{\delta_\epsilon}+\frac{\epsilon}{2}t$$
By Step 1, we have
$$\lim_{x\to\infty}z_{\delta_\epsilon}(t,x)=\underline{V}_{\delta_\epsilon}(t)
\  \ uniformly \ for \ t\in\RR$$
where $\underline{V}_{\delta_\epsilon}(t)$ is the unique positive
almost periodic solution of
$$(\underline{V}_{\delta_\epsilon})_t=\underline{V}_{\delta_\epsilon}
(f(t,\underline{V}_{\delta_\epsilon})-\delta_\epsilon)$$
and
$$\lim_{x\to\infty}v_{\delta_\epsilon}(t,x)=\bar{V}_{\delta_\epsilon}(t)
\  \ uniformly \ for \ t\in\RR$$
where $\bar{V}_{\delta_\epsilon}(t)$ is the unique positive almost
periodic solution of
$$(\bar{V}_{\delta_\epsilon})_t=\bar{V}_{\delta_\epsilon}
(f(t,\bar{V}_{\delta_\epsilon})+\delta_\epsilon)$$
Furthermore, by the same argument as Lemma \ref{semi-wave-lm3} we can find
$R^\epsilon>0$ such that for $x\geq R^\epsilon$,
$$v_{\delta_\epsilon}(t,x)\leq\bar{V}_{\delta_\epsilon}(t)
\ \ for \ all \ t\in\RR$$
and
$$z_{\delta_\epsilon}(t,x)\geq\underline{V}_{\delta_\epsilon}(t)-\epsilon
\ \ for \ all \ t\in\RR$$

It follows that, if
$$0\leq x\leq(c^*-\epsilon)t \ \ and \ \ t\geq
\max\{\frac{2(R^\epsilon-\bar{R}^{\delta_\epsilon})}{\epsilon},\bar{T}_\epsilon,
T^{\delta_\epsilon},\tilde{T}^{\delta_\epsilon}\}$$
then
$$u(t,x+l^*)\leq(1-\delta_\epsilon)^{-2}
v_{\delta_\epsilon}(t,\xi(t)-x)\leq(1-\delta_\epsilon)^{-2}
\bar{V}_{\delta_\epsilon}(t)$$
and
$$u(t,x+l^*)\geq(1-\delta_\epsilon)^{2}
z_{\delta_\epsilon}(t,\eta(t)-x)\geq(1-\delta_\epsilon)^{2}
[\underline{V}_{\delta_\epsilon}(t)-\epsilon]$$

So we take $T^*=\max\{\frac{2(R^\epsilon-\bar{R}^{\delta_\epsilon})}{\epsilon},\bar{T}_\epsilon,
T^{\delta_\epsilon},\tilde{T}^{\delta_\epsilon}\}$.
If $t\geq T^*$ and $l^*\leq x \leq(c^*-\epsilon)t$,
we have
$$(1-\delta_\epsilon)^{2}
[\underline{V}_{\delta_\epsilon}(t)-\epsilon]\leq u(t,x)
\leq(1-\delta_\epsilon)^{-2}
\bar{V}_{\delta_\epsilon}(t)$$
In the view of Step 1, this implies that
$$(1-\delta_\epsilon)^{2}
[\underline{V}_{\delta_\epsilon}(t)-\epsilon]
-\bar{V}_{\delta_\epsilon}(t)\leq u(t,x)-V^*(t)
\leq(1-\delta_\epsilon)^{-2}
\bar{V}_{\delta_\epsilon}(t)-\underline{V}_{\delta_\epsilon}(t)$$
Let
$$I(\epsilon)=\max\{|(1-\delta_\epsilon)^{2}
[\underline{V}_{\delta_\epsilon}(t)-\epsilon]
-\bar{V}_{\delta_\epsilon}(t)|, |(1-\delta_\epsilon)^{-2}
\bar{V}_{\delta_\epsilon}(t)-\underline{V}_{\delta_\epsilon}(t)|\}$$
Thus,
$$|u(t,x)-V^*(t)|\leq I(\epsilon).$$

By \eqref{6-6},
$$\lim_{t\to\infty}u(t,x)-V^*(t)=0 \ \ uniformly \ for \ x\in[0,l^*]$$
Hence we can find $\hat{T}>T^*$ such that
$$|u(t,x)-V^*(t)|\leq I(\epsilon) \ \ for \ t\geq\hat{T} \ and \ 0\leq x \leq l^*$$

Finally, we obtain for all $t\geq\hat{T}$ and $0\leq x \leq (c^*-\epsilon)t$,
$$|u(t,x)-V^*(t)|\leq I(\epsilon)$$
Let $\epsilon\to0$, we have $I(\epsilon)\to0$. So we get
$$\lim_{t\to0}\max_{x\leq(c^*-\epsilon)t}|u(t,x)-V^*(t)|=0.$$
The proof is now complete.

\end{proof}

\section{Remarks}

We have proved the existence of a unique spreading speed $c^*$ of \eqref{main-eq}
and the existence of a unique time almost periodic positive semi-wave solution of \eqref{aux-main-eq2}. It is seen that the spreading
speed of \eqref{main-eq} and the semi-wave solution of \eqref{aux-main-eq2} are closely related. In this section, we give some remarks
on the spreading speed of  the
 double fronts free boundary problem \eqref{main-doub-eq}.

First of all, note that the existence and uniqueness results for solutions of \eqref{main-eq} with given initial data $(u_0,h_0)$
 can be
extended to \eqref{main-doub-eq} using the same arguments as in Section 5
\cite{DuLi}, except that we need to modify the transformation in the
proof of Theorem 2.1 in \cite{DuLi} such that both boundaries are
straightened. In particular, for given
$g_0<h_0$ and $u_0$ satisfying
\begin{equation}
\begin{cases}
\label{initia-valu-doub}
u_0\in C^2([g_0,h_0],\RR^+) \cr
u_0(g_0)=u_0(h_0)=0 \ \ and  \ \ u_0>0 \ \ in \ \ (g_0,h_0),
\end{cases}
\end{equation}
the system \eqref{main-doub-eq} has a unique global solution $(u(t,x;u_0,h_0,g_0),h(t;u_0,h_0,g_0),g(t;u_0,h_0,g_0))$ with
$u(0,x;u_0,h_0,g_0)=u_0(x)$, $h(0;u_0,h_0,g_0)=h_0$, $g(0;u_0,h_0,g_0)=g_0$.
Moreover, $g(t)$ decreases and $h(t)$ increases as $t$ increases. Let
$$g_\infty=\lim_{t\to\infty}g(t;u_0,h_0,g_0)
\quad {\rm and}\quad h_\infty=\lim_{t\to\infty}h(t;u_0,h_0,g_0).
$$

We next note that, by (H3),  there is $L^*\ge 0$ such that
 $\inf_{ l\ge L^*} \tilde\lambda(a(\cdot),l)>0$, where $a(t)=f(t,0)$. By (H3) again, there is
 a unique time almost periodic and space homogeneous positive solution
 $V^*(t)$ of
\begin{equation}
\label{glob-eq}
u_t=u_{xx}+uf(t,u) \quad x\in(-\infty,\infty).
\end{equation}
Moreover,
 for any $u_0\in C^b_{\rm unif}(\RR,\RR^+)$ with
$\inf_{x\in(-\infty,\infty)}u_0(x)>0$,
$\lim_{t\to\infty}\|u(t,\cdot;u_0)-V^*(t)\|_{\infty}=0$.
The following proposition then follows from \cite[Proposition 6.2]{LiLiSh1}.

\begin{proposition}
\label{spreading-vanishing-doub-prop}
Assume (H1)-(H3). Let $u_0$ satisfying \eqref{initia-valu-doub} and $g_0<h_0$ be given.
\begin{itemize}
\item[(1)]
Either

(i) $h_\infty-g_\infty\le L^*$ and
$\lim_{t\to+\infty} \|u(t,\cdot;u_0,h_0,g_0)\|_{C([g(t),h(t)])}=0$ (i.e. vanishing occurs)

or

(ii)  $h_{\infty}=-g_{\infty}=\infty$ and
$\lim_{t\to\infty}[u(t,x;u_0,h_0,g_0)-V^*(t)]=0$
locally uniformly for $x\in(-\infty,\infty)$ (i.e. spreading occurs).

\item[(2)] If $h_0-g_0\ge L^*$, then $h_\infty=-g_{\infty}=\infty$.

\item[(3)]
Suppose $h_0-g_0<L^*$. Then there exists $\mu^{*}>0$
such that spreading occurs if $\mu>\mu^{*}$ and vanishing occurs if $\mu\le \mu^{*}$.
\end{itemize}
\end{proposition}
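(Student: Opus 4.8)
The plan is to reduce to the one free boundary analysis of Sections~4--5 together with Part~I of the series: the statement is essentially \cite[Proposition 6.2]{LiLiSh1}, and the argument runs parallel to the single front case. First I would record the elementary facts. By the Hopf lemma, $u_x(t,g(t))>0$ and $u_x(t,h(t))<0$ for $t>0$, so $g(\cdot)$ strictly decreases and $h(\cdot)$ strictly increases; hence $g_\infty\in[-\infty,g_0)$ and $h_\infty\in(h_0,+\infty]$ exist and $h(t)-g(t)>h_0-g_0$ for every $t>0$. By (H3) and the monotonicity in the interval length of the principal Lyapunov exponent, there is a threshold $L^*\ge 0$ with $\tilde\lambda(a(\cdot),l)>0$ for $l>L^*$ and $\tilde\lambda(a(\cdot),l)\le 0$ for $l<L^*$, where $a(t)=f(t,0)$; and $V^*(t)$ is the unique almost periodic positive solution of \eqref{ode-eq1}, equivalently the unique almost periodic positive spatially homogeneous solution of \eqref{glob-eq}, attracting every uniformly positive datum.

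For part (1) I would split on whether $h_\infty-g_\infty>L^*$ or $h_\infty-g_\infty\le L^*$. If $h_\infty-g_\infty>L^*$, fix $t_0$ and an interval $[a,b]\subset(g(t_0),h(t_0))$ with $b-a>L^*$; since $\tilde\lambda(a(\cdot),b-a)>0$, the Dirichlet problem on $(a,b)$ has a stable almost periodic positive solution, a small multiple of which is a subsolution for $u$ on $(a,b)$, giving $\liminf_{t\to\infty}u(t,x)>0$ locally uniformly there and, by propagating this lower bound, on all of $(g_\infty,h_\infty)$; this excludes vanishing. It also forces $h_\infty=+\infty$ and $g_\infty=-\infty$: if e.g.\ $h_\infty<\infty$, then $h$ being bounded and monotone together with the parabolic boundary estimates gives $h'(t)=-\mu u_x(t,h(t))\to 0$, while the interior positivity of $u$ and a quantitative Hopf estimate at the moving boundary keep $-u_x(t,h(t))$ bounded below, a contradiction. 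Once $g_\infty=-\infty$ and $h_\infty=+\infty$, comparing $u$ from above with spatially homogeneous ODE supersolutions and from below with the stable almost periodic positive Dirichlet solutions on large fixed sub-intervals (whose limit as the interval grows is $V^*$), exactly as for Proposition \ref{main-results-of-part1}, yields $u(t,x)-V^*(t)\to 0$ locally uniformly, i.e.\ spreading. If instead $h_\infty-g_\infty\le L^*$, the domain is bounded, parabolic estimates give $h'(t)\to 0$ and $g'(t)\to 0$, and a contradiction argument shows $\|u(t,\cdot)\|_{C([g(t),h(t)])}\to 0$: otherwise a time-translation limit of $u$ would be a nontrivial nonnegative solution of a Dirichlet problem on the limiting bounded interval with vanishing flux at both endpoints, which the Hopf lemma forbids. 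Thus vanishing occurs, and no intermediate behaviour survives.

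Part (2) is then immediate: if $h_0-g_0\ge L^*$, then $h(t)-g(t)>L^*$ for all $t>0$, so $h_\infty-g_\infty>L^*$ and part (1) gives spreading. For part (3), assume $h_0-g_0<L^*$. First I would prove monotone dependence on $\mu$: if $\mu_1<\mu_2$ then $h_2'(t)=-\mu_2 u_x(t,h_2(t))>-\mu_1 u_x(t,h_2(t))$ and symmetrically $g_2'(t)\le -\mu_1 u_x(t,g_2(t))$, so the $\mu_2$-solution is a two-sided super/sub-solution of the $\mu_1$-problem with the same data; the comparison principle for \eqref{main-doub-eq} then gives $g_{\mu_2}(t)\le g_{\mu_1}(t)$, $h_{\mu_2}(t)\ge h_{\mu_1}(t)$, $u_{\mu_2}\ge u_{\mu_1}$, so spreading is preserved under increasing $\mu$. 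Next, for small $\mu$ I would build an upper solution living on a domain whose length stays strictly below $L^*$ (possible since $h_0-g_0<L^*$): the principal Dirichlet eigenfunction on a fixed interval of length $\ell\in(h_0-g_0,L^*)$, whose principal eigenvalue is negative, times an exponentially decaying amplitude dominating $f(t,\cdot)$ near $0$, with the boundary velocity inequality checked once $\mu$ is small enough; this gives $h_\infty-g_\infty<L^*$ and $u\to 0$, i.e.\ vanishing. For large $\mu$, a small bump lower solution whose fronts separate at speed proportional to $\mu$ satisfies $h(t)-g(t)>L^*$ in finite time, forcing spreading. Hence $\mu^*:=\inf\{\mu>0:\text{spreading occurs}\}$ is finite and positive, and monotonicity gives spreading for $\mu>\mu^*$ and vanishing for $\mu<\mu^*$. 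Finally, spreading for a given $\mu$ is equivalent to $h(t_1)-g(t_1)>L^*$ for some finite $t_1$, an open condition in $\mu$ by continuous dependence of the solution on $\mu$ on compact time intervals; so the vanishing set is closed and contains $\mu^*$, giving vanishing for all $\mu\le \mu^*$.

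The step I expect to be the main obstacle is the implication inside part (1) that $h_\infty-g_\infty>L^*$ forces both fronts to escape to infinity: this needs a genuinely uniform-in-time interior positivity bound for $u$ combined with a quantitative Hopf-type estimate at a moving free boundary, so as to keep $-u_x$ at the boundary bounded away from zero. Together with the companion contradiction argument ruling out nontrivial solutions of sub-threshold Dirichlet problems in the vanishing case, this is where the almost periodicity and lack of time-compactness must be handled carefully, just as in \cite{LiLiSh1}; the explicit super/sub-solution constructions in part (3) are then routine given the comparison principles of Section~3 and the eigenvalue picture above.
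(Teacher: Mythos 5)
The paper itself offers no proof of this proposition: the text immediately preceding it reads ``The following proposition then follows from \cite[Proposition 6.2]{LiLiSh1}'', so the whole burden is delegated to Part~I of the series. Your reconstruction is a plausible and essentially correct sketch of the standard Du--Lin/Du--Guo style dichotomy argument, and it is consistent with the machinery that is set up here (monotonicity of $g,h$ via the Hopf lemma, the formula $\tilde\lambda(a,0,l)=\hat a-\pi^2/l^2$ from Remark~\ref{Lyapunov-exp-rk} giving a sharp Dirichlet threshold $L^*$, the convergence to $V^*$ on the whole line, and the comparison principles of Section~3 extended to two free boundaries). The monotonicity in $\mu$ and the openness argument that places $\mu^*$ in the vanishing set are both correct given the equivalence ``spreading $\iff h(t_1)-g(t_1)>L^*$ at some finite $t_1$'', which you correctly extract from parts~(1) and~(2).

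Worth noting, though, is that the present paper contains a simpler mechanism than the one you rebuild, and Part~I almost certainly exploits it: in the proof of Proposition~\ref{spreading-speed-doub-prop} the authors sandwich the two-front solution between two \emph{symmetric} (even-in-$x$) two-front solutions built from even data $u_0^\pm$; by symmetry these have $u_x(t,0)\equiv 0$, so their restrictions to $x\ge 0$ solve the single-front Neumann problem \eqref{main-eq}, and the one-front results (Propositions~\ref{main-results-of-part1}, Theorems~\ref{main-thm1}--\ref{main-thm2}) apply directly. The same symmetrization reduces the two-front dichotomy in~(1)--(2) to the already-established single-front one, avoiding the need to redo the ``uniform interior positivity $+$ quantitative Hopf at a moving boundary $\Rightarrow h_\infty<\infty$ is impossible'' argument and the zero-flux contradiction in the vanishing case from scratch. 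Your route is self-contained and correct in outline, but it repeats work that the reduction-by-reflection short-circuits; if you want a proof matching the paper's economy of effort, run the symmetric sandwich first and only then appeal to the one-front results. Two small cautions if you keep your route: (a) the claim that $\tilde\lambda(a,\ell)<0$ strictly for $\ell<L^*$ is needed for the small-$\mu$ supersolution in~(3) and should be justified from $\tilde\lambda(a,0,l)=\hat a-\pi^2/l^2$ rather than from the looser inequality $\inf_{l\ge L^*}\tilde\lambda>0$ quoted in Section~6; and (b) the step ``$h$ bounded and monotone $\Rightarrow h'(t)\to 0$'' is not automatic and requires the parabolic Schauder bound on $u_x(t,h(t))$ in the almost periodic setting, exactly as you anticipate.
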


We now have

\begin{proposition}
\label{spreading-speed-doub-prop}
For given $g_0<h_0$ and  $u_0$ satisfying \eqref{initia-valu-doub}, if spreading occurs,
then
$$
\lim_{t\to\infty}\frac{h(t)}{t}=\lim_{t\to\infty}\frac{-g(t)}{t}=c^*,
$$
where $c^*$ is the spreading speed of \eqref{main-eq}.
\end{proposition}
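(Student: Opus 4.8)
The plan is to transfer the comparison arguments used in Steps 2 and 3 of the proof of Theorem \ref{main-thm2} to the double fronts problem \eqref{main-doub-eq}, exploiting the fact that, once spreading occurs, the restriction of the solution of \eqref{main-doub-eq} to a half line $\{x\ge l^*\}$ becomes a solution of a one-sided free boundary problem of the form \eqref{free-boun-nonhom} with a controllable Dirichlet datum at $x=l^*$. Since \eqref{glob-eq} is invariant under the reflection $x\mapsto -x$, and since under this reflection a solution of \eqref{main-doub-eq} with fronts $(g(t),h(t))$ becomes a solution of \eqref{main-doub-eq} with fronts $(-h(t),-g(t))$ for which spreading again occurs, it suffices to prove $\lim_{t\to\infty}\frac{h(t)}{t}=c^*$; the assertion for $g(t)$ then follows by applying this to $v(t,x):=u(t,-x;u_0,h_0,g_0)$.

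First I would prove $\limsup_{t\to\infty}\frac{h(t)}{t}\le c^*$. By Proposition \ref{spreading-vanishing-doub-prop}, spreading forces $h_\infty=-g_\infty=\infty$ and $u(t,x;u_0,h_0,g_0)-V^*(t)\to0$ locally uniformly in $x\in\RR$. Fix a small $\epsilon>0$. Since $\bar V_\epsilon(t)-V^*(t)$ is almost periodic and bounded away from $0$, there is $T>0$ with $g(T)<l^*<h(T)$ and $u(t,l^*;u_0,h_0,g_0)\le\bar V_\epsilon(t)$ for all $t\ge T$. Then for $t\ge0$ the pair $\tilde u(t,x):=u(t+T,x+l^*;u_0,h_0,g_0)$, $\tilde h(t):=h(t+T)-l^*$, restricted to $0\le x\le\tilde h(t)$, is a solution of \eqref{free-boun-nonhom} with boundary datum $l(t)=u(t+T,l^*;u_0,h_0,g_0)\le\bar V_\epsilon(t+T)$, the interior equation and the right free boundary law $\tilde h'(t)=-\mu\tilde u_x(t,\tilde h(t))$ being inherited from \eqref{main-doub-eq}. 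Hence the supersolution $w(t,x)=(1-\epsilon)^{-2}v_\epsilon(t,\xi(t)-x)$ with $\xi'(t)=(1-\epsilon)^{-2}\mu(v_\epsilon)_x(t,0)$ constructed in Step 2 of the proof of Theorem \ref{main-thm2} applies verbatim through Lemma \ref{com-principle free boun 2}, yielding $\tilde h(t)\le\xi(t+T)$ for $t$ large and hence
$$
\limsup_{t\to\infty}\frac{h(t)}{t}\le(1-\epsilon)^{-2}\lim_{t\to\infty}\frac{1}{t}\int_0^t\mu(v_\epsilon)_x(s,0)\,ds.
$$
Letting $\epsilon\to0$ and using $(v_\epsilon)_x(t,0)\to\tilde u^{**}_x(t,0)$ uniformly in $t$ gives $\limsup_{t\to\infty}\frac{h(t)}{t}\le c^*$.

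For the reverse inequality $\liminf_{t\to\infty}\frac{h(t)}{t}\ge c^*$ I would run Step 3 of the proof of Theorem \ref{main-thm2} with the same $\tilde u,\tilde h$. Spreading, together with comparison with the ODE \eqref{ode-eq1}, gives $\liminf_{t\to\infty}[\tilde u(t,x)-\underline V_\epsilon(t+T)]\ge0$ locally uniformly in $x\ge0$, so for $t$ large the boundary datum $\tilde u(t,0)$ dominates the boundary value of the subsolution $w(t,x)=(1-\epsilon)^2z_\epsilon(t,\eta(t)-x)$ with $\eta'(t)=(1-\epsilon)^2\mu(z_\epsilon)_x(t,0)$ from Step 3 (here one uses $z_\epsilon(t,x)\le\underline V_\epsilon(t)$ for all $x$, so that the factor $(1-\epsilon)^2$ leaves room to absorb the error). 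Lemma \ref{com-principle free boun 3} then gives $\tilde h(t)\ge\eta(t+T)$ for $t$ large, whence
$$
\liminf_{t\to\infty}\frac{h(t)}{t}\ge(1-\epsilon)^2\lim_{t\to\infty}\frac{1}{t}\int_0^t\mu(z_\epsilon)_x(s,0)\,ds,
$$
and letting $\epsilon\to0$, using $(z_\epsilon)_x(t,0)\to\tilde u^{**}_x(t,0)$ uniformly in $t$, gives $\liminf_{t\to\infty}\frac{h(t)}{t}\ge c^*$. Combining the two bounds gives $\lim_{t\to\infty}\frac{h(t)}{t}=c^*$, and the reflection argument above then finishes the proof.

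The only step that is not essentially a repetition of the proof of Theorem \ref{main-thm2} is the reduction of \eqref{main-doub-eq} to the one-sided free boundary problem \eqref{free-boun-nonhom}: one has to verify that the restriction of $(u,h,g)$ to $\{x\ge l^*\}$ truly satisfies \eqref{free-boun-nonhom}, with the left free boundary now replaced by a Dirichlet condition carrying the data $u(t,l^*;u_0,h_0,g_0)$, and that this data can be squeezed between $\underline V_\epsilon$ and $\bar V_\epsilon$ uniformly for large $t$. The latter is exactly where the spreading conclusion of Proposition \ref{spreading-vanishing-doub-prop} and the uniform separation $\underline V_\epsilon<V^*<\bar V_\epsilon$ of the relevant almost periodic solutions enter. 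I expect this bookkeeping, rather than any new analytic estimate, to be the main point to get right.
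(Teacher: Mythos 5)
Your proof is correct, but it follows a genuinely different route from the paper's. You redo Steps~2 and~3 of the proof of Theorem~\ref{main-thm2} directly on the double-fronts solution: restrict to $\{x\ge l^*\}$ (so the left front becomes a Dirichlet datum squeezed between $\underline V_\epsilon$ and $\bar V_\epsilon$ after the spreading conclusion kicks in), construct the same super- and subsolutions $w$, and invoke Lemmas~\ref{com-principle free boun 2} and~\ref{com-principle free boun 3}; the reflection $x\mapsto -x$ then transfers the conclusion for $h(t)$ to $g(t)$. The paper instead avoids touching the internals of Theorem~\ref{main-thm2} at all: it sandwiches $u(t,\cdot;u_0,h_0,g_0)$ between two solutions of \eqref{main-doub-eq} with \emph{even} initial data $u_0^\pm$ (below $u_0$ on $[-L^*,L^*]$ and above $u_0$ on $[-N^*L^*,N^*L^*]$, with both intervals long enough that spreading is guaranteed by Proposition~\ref{spreading-vanishing-doub-prop}(2)); symmetry forces $u_x(t,0;u_0^\pm,\cdot)=0$, so each symmetric solution restricts to a genuine solution of the one-sided problem \eqref{main-eq}, and Theorem~\ref{main-thm2} can then be applied as a black box to both, giving $h^\pm(t)/t\to c^*$, and the squeeze finishes. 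The paper's route is shorter and cleaner because it only needs the comparison principle plus symmetry, whereas yours repeats the semi-wave super/subsolution bookkeeping; on the other hand, yours is more self-contained in spirit and would generalize more readily if a symmetrization trick were unavailable. One small caveat in your write-up: you should make explicit that $\inf_t[\bar V_\epsilon(t)-V^*(t)]>0$ (which follows from almost periodicity plus the strict ODE comparison) before asserting the existence of $T$ with $u(t,l^*)\le\bar V_\epsilon(t)$ for all $t\ge T$; the paper does the analogous verification via the ODE supersolution $u^*(t)$ in Step~2.
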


\begin{proof}
Let $g_0<h_0$ and  $u_0$ satisfying \eqref{initia-valu-doub} be given. Assume that
$g_\infty=-\infty$ and $h_\infty=\infty$. Then there are $T^*>0$ and $N^*>0$ such that
$$
-N^*L^*<g(T^*)<-L^*<L^*<h(T^*)<N^*L^*.
$$
Without loss of generality, we may assume that $g_0<-L^*<L^*<h_0$ and $u_0(x)>0$ for $g_0<x<h_0$.
Note that there are $u_0^- \in C([-L^*,L^*],\RR^+)$ and $u_0^+\in C([-N^*L^*,N^*L^*],\RR^+)$
such that
$$
\begin{cases}
u_0^-(-x)=u_0^-(x)\,\, {\rm for}\,\, -L^*<x<L^*\cr
 u_0^-(\pm L^*)=0\cr
  u_0^-(x)<u_0(x)\,\,{\rm for}\,\, -L^*<x<L^*,
  \end{cases}
$$
and
$$
\begin{cases}
u_0^+(-x)=u_0^+(x)\,\, {\rm for}\,\, -N^*L^*<x<N^*L^*\cr
 u_0^+(\pm N^* L^*)=0\cr
   u_0(x)<u_0^+(x)\,\,{\rm for}\,\, -N^*L^*<x<N^*L^*.
\end{cases}
$$
Hence
$$
\begin{cases}
u(t,x;u_0^-,L^*,-L^*)\le u(t,x;u_0,h_0,g_0)\quad {\rm for}\quad g(t;u_0^-,L^*,-L^*)<x<h(t;u_0^-,L^*,-L^*)\cr
u(t;x,u_0,g_0,h_0)\le u(t,x;u_0^+,N^*L^*,-N^*L^*)\quad {\rm for}\quad g(t;u_0,h_0,g_0)<x<h(t;u_0,h_0,g_0).
\end{cases}
$$
Note that
$$
\begin{cases}
u(t,-x;u_0^-,L^*,-L^*)=u(t,x;u_0^-,L^*,-L^*)\cr
h(t;u_0^-,L^*,-L^*)=-g(t;u_0^-,L^*,-L^*)
\end{cases}
$$
and
$$
\begin{cases}
u(t,-x;u_0^+,L^*,-L^*)=u(t,x;u_0^+,L^*,-L^*)\cr
h(t;u_0^+,N^*L^*,-N^*L^*)=-g(t;u_0^+,N^*L^*,-N^*L^*).
\end{cases}
$$
Then $u_x(t,0;u_0^-,L^*,-L^*)=u_x(t,0;u_0^+,N^*L^*,-N^*L^*)=0$. This together with Theorem \ref{main-thm2}
implies that
\begin{equation*}
c^*=\lim_{t\to\infty}\frac{h(t;u_0^-,L^*,-L^*)}{t}\le \lim_{t\to\infty}\frac{h(t;u_0,h_0,g_0)}{t}\le \lim_{t\to\infty}\frac{h(t;u_0^+,N^*L^*,-N^*L^*)}{t}=c^*
\end{equation*}
and
$$
c^*=\lim_{t\to\infty}\frac{-g(t;u_0^-,L^*,-L^*)}{t}\ge \lim_{t\to\infty}\frac{-g(t;u_0,h_0,g_0)}{t}\ge \lim_{t\to\infty}\frac{-g(t;u_0^+,N^*L^*,-N^*L^*)}{t}=c^*
$$
Hence
$$
\lim_{t\to\infty}\frac{h(t;u_0,h_0,g_0)}{t}=\lim_{t\to\infty}\frac{-g(t;u_0,h_0,g_0)}{t}=c^*.
$$
\end{proof}

\section*{Acknowledgements}

Fang Li would like to thank the China Scholarship Council for
financial support during the two years of her overseas study and to
express her gratitude to the  Department of Mathematics and
Statistics, Auburn University  for its kind hospitality.

\end{document}